
\documentclass{article}%
\usepackage{amsmath}
\usepackage{amsfonts}
\usepackage{amssymb}
\usepackage{graphicx}%
\setcounter{MaxMatrixCols}{30}
\newtheorem{theorem}{Theorem}

\newtheorem{corollary}[theorem]{Corollary}

\newtheorem{definition}[theorem]{Definition}
\newtheorem{example}[theorem]{Example}

\newtheorem{lemma}[theorem]{Lemma}

\newtheorem{proposition}[theorem]{Proposition}
\newtheorem{remark}[theorem]{Remark}

\newenvironment{proof}[1][Proof]{\noindent\textbf{#1.} }{\ \rule{0.5em}{0.5em}}

\begin{document}

\title{Cyclic $n$-gonal surfaces and their automorphism groups - UNED Geometry Seminar}
\author{S. Allen Broughton, Rose-Hulman Institute of Technology
\and Aaron Wootton, University of Portland}
\maketitle

\begin{abstract}
This paper is based upon two lectures on the authors' joint work, presented by
the first author at the UNED Geometry Seminar in February-March, 2009. As the
detailed statements and proofs of results presented in the talks will be
published elsewhere, this paper will only give an overview of cyclic $n$-gonal
surfaces, their automorphism groups, and some examples illustrating the
computational methods.

\end{abstract}
\tableofcontents

\section{Cyclic $n$-gonal surfaces\label{sec-intro}}

This paper is based upon two lectures \cite{Brou1}, \cite{Brou2} on the
author's joint work, presented by the first author at the UNED Geometry
Seminar in February and March of 2009. The full results for those talks will
be given in the forthcoming papers \cite{Brou3} and \cite{Brou-Woo2}, so in
this paper we content ourselves with an overview of cyclic $n$-gonal surfaces,
their automorphism groups, and a small number of examples illustrating the
computational methods.

A closed Riemann surface $S$ of genus $\sigma\geq2$ is called {cyclic
$n$-gonal} if there exists a cyclic group of automorphisms $C$ of order $n$
such that the quotient space $S/C$ has genus $0$. Such surfaces are of great
interest since they have a simple plane model given in equation \ref{eq-basic}
below. The map $\pi_{C}:S\rightarrow S/C=\mathbb{P}^{1}$, where $\mathbb{P}%
^{1}$ denotes the Riemann sphere, is called a \emph{cyclic }$n$\emph{-gonal
morphism}. When $n=2$, $S$ is a hyperelliptic surface and so any generator of
$C$ should be thought of as a generalization of a hyperelliptic involution.

We consider the general problem of determining $A=\mathrm{Aut}{(S)}$, the full
conformal automorphism group of $S.$ When $C$ is normal in $A$, such as the
hyperelliptic case or the prime order and large genus case, there are
well-known methods for determining $A$ (see Section \ref{sec-top}). The normal
case has been investigated by several authors \cite{Acc}, \cite{Kont},
\cite{Sh}, \cite{Woo1}. The non-normal case has been considered in
\cite{Woo2}, and in some detail in the forthcoming work \cite{Brou-Woo2}. In
general, a classification of surfaces and automorphism groups is hopeless. See
the paper \cite{Brou-Woo2} for some examples of why the problem is complex
when there are no restrictions on the $C$-action$.$ When some restrictions are
placed on the $C$-action or, the automorphism group $A$, then some progress
can be made. Some possible restrictions are the following.

\begin{enumerate}
\item In \cite{Woo1} the cyclic group $C$ is assumed to be of prime order.

\item In \cite{Kont} the $C$-action is fully ramified, i.e., $m_{i}=n,$
or$\ \gcd(n,p_{i})=1.$ See the next section for notation.

\item In \cite{Brou-Woo2} the cyclic group $C$ is assumed to be a weakly
malnormal subgroup of $A,$ in which case $C$ is normal if the genus is large
enough. The definition is given in Section \ref{sec-const}. This case includes
the first two cases above.

\item Another case of great interest is quasi-platonic surfaces, namely
$\pi_{A}:S\rightarrow S/A=\mathbb{P}^{1}$ is branched over three points. These
cases are interesting since the surfaces are defined over number fields and
give many computable examples of dessins d'enfants. Details will be in the
forthcoming paper \cite{Brou-Woo3}.
\end{enumerate}

\subsection{Plane models}

One reason for looking at cyclic $n$-gonal surfaces is that there is some hope
in determining the equation of the surface. There is a plane model of the
form
\begin{equation}
y^{n}=f(x)=%
{\textstyle\prod\limits_{i=1}^{r}}
\left(  x-a_{i}\right)  ^{p_{i}},\label{eq-basic}%
\end{equation}
where the $p_{i}$ and $p=p_{1}+\cdots+p_{r}=\deg(f)$ satisfy
\begin{equation}
0<p_{i}<n\label{eq-basic-1}%
\end{equation}%
\begin{equation}
n\ \mathrm{divides}\text{ }p\label{eq-basic-2}%
\end{equation}%
\begin{equation}
\gcd(n,p_{1},\ldots,p_{r})=1\label{eq-basic-3}%
\end{equation}
\bigskip The closed curve $\overline{S}$ in $\mathbb{P}^{2}$ defined by
equation \ref{eq-basic} is smooth except possibly where $y=0,$ $\infty.$ The
normalization map $\nu:S\rightarrow\overline{S}$ yields a smooth curve of
genus
\begin{equation}
\sigma=\frac{1}{2}\left(  2+(r-2)n-%
{\textstyle\sum\limits_{i=1}^{r}}
d_{i}\right)  ,\label{eq-genus}%
\end{equation}
where $d_{i}=\gcd(n,p_{i}).$ The group $C$ acts on the smooth part of
$\overline{S}$ by
\begin{equation}
(x,y)\rightarrow(x,e^{2\pi ki/n}y)\label{eq-basic-4}%
\end{equation}
and this action extends to \ $S$ via $\nu.$ The $n$-gonal morphism is the
$\nu$-lift of the map
\begin{equation}
\overline{S}\rightarrow\mathbb{P}^{1},(x,y)\rightarrow x.\label{eq-basic-5}%
\end{equation}
The $n$-gonal morphism $\pi_{C}:S\rightarrow\mathbb{P}^{1}$ is ramified only
over the finite points $\left\{  a_{1},\ldots,a_{r}\right\}  ,$ and the degree
of ramification over $a_{i}$ is
\begin{equation}
m_{i}=\frac{n}{d_{i}}=\frac{n}{\gcd(n,p_{i})}.\label{eq-basic-6}%
\end{equation}

\subsection{Overview of computing ${\text{Aut}}(S)$}

For generically chosen $a_{i}$ we usually have $C=A=\mathrm{Aut}(S).$ For
special values of the $a_{i}$ and selections of the $p_{i}$ the automorphism
group may be larger. See, for instance \cite{Kont}, \cite{Sh}, \cite{Woo1},
and \cite{Woo2}. We shall not directly work with the defining equation or
plane models in this paper, but use group theoretic methods instead.

Let $N=\mathrm{Nor}_{A}(C)$ so that $C\trianglelefteq N\leq A.$ Our method is
to lift this triple to a triple of covering Fuchsian groups $\Gamma
_{C}\trianglelefteq\Gamma_{N}\leq\Gamma_{A}$ and then to employ the group
theory to implement classification. In Section \ref{sec-triples} we describe
the lifted triples, especially canonical generators and signatures. In
Sections \ref{sec-top}, \ref{sec-bot}, we describe the inclusions $\Gamma
_{C}\vartriangleleft\Gamma_{N}$ and $\Gamma_{N}<\Gamma_{A},$ respectively,
using quite different methods in the two cases. In Section \ref{sec-top} the
group $K=\Gamma_{N}/\Gamma_{C}\backsimeq N/C$ and its action on the generating
system of $C$ will be fundamental to the discussion $\Gamma_{C}%
\vartriangleleft\Gamma_{N}$. In Section \ref{sec-bot} we use permutation group
methods on the coset space $\Gamma_{A}/\Gamma_{N}$ to describe the inclusions
$\Gamma_{N}<\Gamma_{A}.$ In particular we describe the notion of families of
triples $\Gamma_{C}\trianglelefteq\Gamma_{N}\leq\Gamma_{A}$ where $\Gamma
_{N}<\Gamma_{A}$ has a fixed coset structure but $n=\left\vert C\right\vert $
varies over a family.\newline

The general computational procedure for classification is

\begin{enumerate}
\item Specify a restriction on the triples $\Gamma_{C}\trianglelefteq
\Gamma_{N}\leq\Gamma_{A}$ by imposing a group theoretic, geometric, or
arithmetic constraint as noted in the introductory paragraphs. This is
described in Section \ref{sec-const}. This step limits the complexity of calculations.

\item Compute all possible signature pairs $\left(  \mathcal{S}(\Gamma
_{N}),\mathcal{S}(\Gamma_{A})\right)  .$ See Section \ref{sec-triples} for the
definition of signature.

\item Determine the inclusions $\Gamma_{N}<\Gamma_{A}$ determining both
exceptional cases and parametric families. This is discussed in Section
\ref{sec-bot}.

\item Determine the exact sequences $\Gamma_{C}\rightarrow\Gamma
_{N}\rightarrow K.$ This is discussed in Section \ref{sec-top}.

\item Fuse the pairs $\Gamma_{N}<\Gamma_{A}$ and $\Gamma_{C}\vartriangleleft
\Gamma_{N}$ together to compute $C\trianglelefteq N\leq A,$ or demonstrate
that no extension exists. We discuss the computationally intensive methods
very briefly in subsection \ref{subsec-monowordmap}. However, because of space
limitations we use ad-hoc methods in the examples in Section \ref{sec-samp}.
Full details of the algorithms are in \cite{Brou-Woo2} and \cite{Brou3}.

\item The surface $S$ automatically exists as a quotient $\mathbb{H}/\Pi$
where $\Pi\vartriangleleft\Gamma_{C}.$ Constructing a model as in equation
\ref{eq-basic} takes much more work, see for instance \cite{Kont}, \cite{Sh},
\cite{Woo1}, and \cite{Woo2}. We do not address construction of the plane
models in this paper.
\end{enumerate}

\begin{remark}
\label{rk-orderA}The order of A is given by%
\begin{equation}
\left\vert A\right\vert =\frac{\left\vert A\right\vert }{\left\vert
N\right\vert }\frac{\left\vert N\right\vert }{\left\vert C\right\vert
}\left\vert N\right\vert =mn\left\vert K\right\vert \label{eq-OrderofA}%
\end{equation}
where
\[
n=\left\vert C\right\vert ,m=\left\vert \Gamma_{A}/\Gamma_{N}\right\vert .
\]
If the signature of $C$ is known -- say the signatures of $N$ and $K$ are
known --then
\begin{equation}
\left\vert C\right\vert =\gcd(\mathrm{periods}\text{ }\mathrm{of}\text{
}\mathrm{C})\label{eq-OrderofC}%
\end{equation}

Given $\Gamma_{N}<\Gamma_{A}$ and $\Gamma_{C}\vartriangleleft\Gamma_{N}$ then
$C\trianglelefteq N\leq A$ is determined by finding a torsion free
$\Pi\vartriangleleft\Gamma_{C}$ such that $\Pi\vartriangleleft\Gamma_{A}.$
There are infinitely many such $\Pi$ but very few result in a cyclic $C$.
Imposing an exact sequence $\Gamma_{C}\rightarrow\Gamma_{N}\rightarrow K$ with
the additional constraints given in equations \ref{eq-OrderofA} and
\ref{eq-OrderofC} eliminates many of the non-cyclic possibilities.
\end{remark}

\section{Translation to Fuchsian group triples\label{sec-triples}}

\subsection{Basics and canonical generators}

We take much of our notation from \cite{Brou-Woo2} and \cite{Woo2}. Recall
that a compact Riemann surface $S$ of genus $\sigma\geq2$ can be realized as a
quotient of the upper half plane $\mathbb{H}/\Pi$ where $\Pi$ is a torsion
free Fuchsian group called a \emph{surface group} for $S$. Under such a
realization, a group $G$ acts as a group of conformal automorphisms on $S$ if
and only if there exists an epimorphism $\eta\colon\Gamma\rightarrow G$ with
$\mathrm{ker}{(\eta)}=\Pi$ for some Fuchsian group $\Gamma$. We call $\eta$ a
\emph{surface kernel epimorphism}, and $\Gamma$ the \emph{covering Fuchsian
group} of $G$, usually denoting it by $\Gamma_{G}.$ We identify the orbit
spaces $\mathbb{H}/\Gamma$ and $S/G.$ The quotient map $\pi_{G}\colon
S\rightarrow S/G$ is branched over the same points as $\pi_{\Gamma}%
\colon\mathbb{H}\rightarrow\mathbb{H}/\Gamma$ with the same ramification
indices. We define the signature of $\Gamma$ to be the tuple
\[
\mathcal{S}(\Gamma)=(\sigma_{\Gamma};m_{1},m_{2},\dots,m_{r}),
\]
where the quotient space $\mathbb{H}/\Gamma$ has genus $\sigma_{\Gamma}$ and
the quotient map, $\pi_{\Gamma}$ (and also $\pi_{G}$) branches over $r $
points with ramification indices $m_{i}$ for $1\leq i\leq r$. We call
$\sigma_{\Gamma}$ the \emph{orbit genus} of $\Gamma$ and the numbers
$m_{1},\dots,m_{r}$ the \emph{periods} of $\Gamma$. If $\sigma_{\Gamma}=0$ the
signature may be abbreviated to $(m_{1},m_{2},\dots,m_{r}),$ which we may also
write as $(m_{1}^{e_{1}},\dots,m_{s}^{e_{s}})$ to indicate repeated periods.
The signature of $\Gamma$ provides information regarding a presentation for
$\Gamma$, and in the special case that $\sigma_{\Gamma}=0$, we have the following.

\begin{theorem}
\label{th-sig} If $\Gamma$ is a Fuchsian group with signature $(m_{1}%
,\dots,m_{r})$ then there exist an ordered set of elliptic (finite$\ $order)
group elements $\mathcal{G}=\left\{  \gamma_{1},\dots,\gamma_{r}\right\}
\subseteq PSL(2,\mathbb{R})$, such that;
\end{theorem}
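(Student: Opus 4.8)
The plan is to recall the classical presentation of a cocompact Fuchsian group of genus zero and then verify that the generators can be realised by elliptic elements with the stated geometric configuration. First I would invoke the uniformization of the quotient orbifold: since $\mathbb{H}/\Gamma$ is a sphere with $r$ cone points of orders $m_1,\dots,m_r$, the orbifold fundamental group has the presentation
\[
\Gamma=\left\langle \gamma_1,\dots,\gamma_r \;\middle|\; \gamma_i^{m_i}=1,\ \gamma_1\gamma_2\cdots\gamma_r=1\right\rangle .
\]
The generators $\gamma_i$ are obtained as homotopy classes of loops in the punctured sphere encircling the respective branch points, lifted to $PSL(2,\mathbb{R})$; each such $\gamma_i$ is elliptic of order exactly $m_i$ because it is conjugate to a rotation about the preimage of the $i$-th cone point. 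I would then state the conclusion of the theorem as: (i) each $\gamma_i$ is elliptic of order $m_i$; (ii) $\gamma_1\cdots\gamma_r=1$; (iii) $\mathcal{G}$ generates $\Gamma$; and possibly (iv) a long-exact-sequence / area statement, namely that the hyperbolic area of $\mathbb{H}/\Gamma$ equals $2\pi\bigl(r-2-\sum_{i=1}^r 1/m_i\bigr)$, which is positive precisely when the signature is hyperbolic.

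Next I would assemble the proof from standard pieces. The existence of the presentation follows from the theory of Fuchsian groups (Macbeath, or the orbifold version of van Kampen): choose a fundamental polygon for $\Gamma$ acting on $\mathbb{H}$ — a $4g+2r$-gon in general, here with $g=0$ so a $2r$-gon with vertex cycles at the elliptic fixed points — and read off the side-pairing transformations. Alternatively, and more cleanly for the genus-zero case, I would use the fact that $\Gamma$ is the orbifold fundamental group of $\mathbb{P}^1$ with marked points and apply the presentation of surface-braid / sphere-with-punctures groups, then kill the $m_i$-th powers to pass from the punctured sphere to the cone-pointed sphere. The ordering of $\mathcal{G}$ corresponds to choosing a base point and disjoint arcs to the branch points in counterclockwise cyclic order, which is exactly the data that makes the single long relation $\gamma_1\cdots\gamma_r=1$ hold.

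The step I expect to be the main obstacle — or at least the one requiring the most care — is proving that the chosen loop classes actually have elliptic representatives of the \emph{exact} order $m_i$ (not merely of order dividing $m_i$) and that they \emph{generate} $\Gamma$ rather than a proper subgroup. For the order, one argues that a small loop around the $i$-th branch point lifts to a path whose endpoints differ by a generator of the stabilizer of a lift of that point, and this stabilizer is cyclic of order exactly $m_i$ by the definition of the ramification index. For generation, one uses that the complement of the elliptic fixed points in a fundamental domain is simply connected, so the side-pairings generate; combined with the relations $\gamma_i^{m_i}=1$ this forces $\langle\mathcal{G}\rangle=\Gamma$. Since the excerpt only displays the statement up to "such that;" I would complete it with the presentation above together with the area formula, and the proof would essentially be a citation to the structure theory of Fuchsian groups (e.g. Beardon, Jones–Singerman) specialised to orbit genus zero, spelling out only the identification of the $\gamma_i$ with loops around branch points.
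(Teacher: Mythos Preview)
The paper does not prove this theorem at all: it is stated as a classical structural fact about genus-zero Fuchsian groups and used throughout without justification. Your sketch is the standard argument (fundamental polygon / orbifold van Kampen, loops around branch points lifting to generators of the elliptic stabilizers) and is correct as far as it goes, so on the level of proof there is nothing to compare---your outline is exactly what one would supply if asked to fill in the reference.

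Where your proposal does diverge from the paper is in your reconstruction of the conclusion after ``such that;''. The paper's three items are: (1) $\Gamma=\langle\gamma_1,\dots,\gamma_r\rangle$; (2) the defining relations $\gamma_i^{m_i}=1$ and $\prod_i\gamma_i=1$; and (3) every non-identity elliptic element of $\Gamma$ lies in a \emph{unique} conjugate of some $\langle\gamma_i\rangle$. You correctly anticipated (1) and (2), but you replaced (3) with the area formula. The area/Riemann--Hurwitz statement is not part of this theorem in the paper; it appears separately as a later remark. More importantly, item (3) is precisely the property the paper leans on repeatedly---it underlies the notion of induced generators, Singerman's Theorem~\ref{th-Singerman1}, and the whole analysis of how canonical generators of a subgroup sit inside an overgroup. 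Your proof sketch does not address (3); establishing it requires the additional observation that elliptic elements of a Fuchsian group have a unique fixed point in $\mathbb{H}$, that fixed points of $\Gamma$ project to cone points of $\mathbb{H}/\Gamma$, and that the stabilizer of any such fixed point is exactly one cyclic group conjugate to some $\langle\gamma_i\rangle$. This is not hard, but it is the piece of content you are missing.
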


\begin{enumerate}
\item $\Gamma=\langle\gamma_{1},\dots,\gamma_{r}\rangle$.

\item Defining relations for $\Gamma$ are
\begin{equation}
\gamma_{1}^{m_{1}}=\gamma_{2}^{m_{2}}=\dots=\gamma_{r}^{m_{r}}=\prod
\limits_{i=1}^{r}\gamma_{i}=1.\label{eq-rels}%
\end{equation}

\item Each non-identity elliptic element (element of finite order) lies in a
unique conjugate of $\langle\gamma_{i}\rangle$ for suitable $i$.
\end{enumerate}

\begin{definition}
We call a set of elements of $\Gamma$ satisfying 1 and 2 of Proposition
\ref{th-sig} \emph{canonical generators} of $\Gamma$ for the signature
$(m_{1},\dots,m_{r})$.
\end{definition}

\begin{remark}
\label{rk-braid}The canonical generators are not unique and the periods of the
signature may be permuted. The permutations can be built up from simple
transpositions as follows. Set
\[
m_{i}^{\prime}=m_{i+1},m_{i+1}^{\prime}=m_{i},m_{j}^{\prime}=m_{j}\text{
\textrm{otherwise}}%
\]
and
\[
\gamma_{i}^{\prime}=\gamma_{i+1},\gamma_{i+1}^{\prime}=\gamma_{i+1}^{-1}%
\gamma_{i}\gamma_{i+1},\gamma_{j}^{\prime}=\gamma_{j}\text{
\textrm{otherwise.}}%
\]
Then the $\gamma_{i}^{\prime}$ constitute a canonical generating set for the
periods $m_{i}^{\prime}$.
\end{remark}

\begin{remark}
Let $\eta\colon\Gamma\rightarrow G$ be a surface kernel epimorphism with
signature $(m_{1},m_{2},\dots,m_{r})$. Set $g_{i}=\eta(\gamma_{i}).$ Then the
vector $(g_{1},\dots,g_{r})$ of elements satisfies
\begin{equation}
o(g_{i})=o(\gamma_{i})=m_{i},1\leq i\leq r\label{eq-gv1}%
\end{equation}%
\begin{equation}
\prod\limits_{i=1}^{r}g_{i}=1\label{eq-gv2}%
\end{equation}%
\begin{equation}
G=\left\langle g_{1},\dots,g_{r}\right\rangle .\label{eq-gv3}%
\end{equation}
Any such vector is called a generating $(m_{1},\dots,m_{r})$-vector of $G$. We
call the tuple $\mathcal{S}(\Gamma)$ the \emph{branching data} or the
\emph{signature} of the $G$-action on $S$. the definition can be extended to
the case where $\sigma_{\Gamma}>0,$ but we do not need it.
\end{remark}

\begin{remark}
Using areas of fundamental regions one can show that $A(\Gamma)=\pi
(2\sigma-2)/\left\vert G\right\vert .$ Letting $\tau$ be the orbit genus of
$\sigma(S/G)=\sigma(\mathbb{H}/\Gamma)$ then we get the Riemann-Hurwitz
formula or
\[
\frac{2\sigma-2}{\left\vert G\right\vert }=2\tau-2+%
{\displaystyle\sum\limits_{i=1}^{r}}
\left(  1-\frac{1}{m_{i}}\right)  .
\]

\end{remark}

\begin{example}
\label{ex-ngonalsignature}Suppose that $C$ is the cyclic $n$-gonal group of
the surface given by equation \ref{eq-basic}. Then the signature of the
$C$-action is $(m_{1},m_{2},\dots,m_{r})$ with the $m_{i}$ given by equation
\ref{eq-basic-6}. The Riemann-Hurwitz equation applied to the $C$-action is
then
\[
\frac{2\sigma-2}{n}=\frac{2\sigma-2}{\left\vert C\right\vert }=-2+%
{\displaystyle\sum\limits_{i=1}^{r}}
\left(  1-\frac{1}{m_{i}}\right)
\]
or
\[
\sigma=\frac{1}{2}\left(  2+\left(  r-2\right)  n-%
{\displaystyle\sum\limits_{i=1}^{r}}
d_{i}\right)  ,
\]
confirming equation \ref{eq-genus}.
\end{example}

\subsection{Induced generators}

Note that Proposition \ref{th-sig} implies that, if $\Gamma\leq\Delta$, then
any elliptic element of $\Gamma$ must be conjugate to an elliptic element of
$\Delta$. This motivates the following definition.

\begin{definition}
Suppose $\Gamma\leq\Delta$ are Fuchsian groups, $\theta\in\Gamma$ is an
elliptic element and $\theta$ is $\Delta$-conjugate to a power of $\zeta
\in\Delta,$ i.e., $\theta=x\zeta^{k}x^{-1}.$ Then we say $\theta$ is induced
by $\zeta$.
\end{definition}

We note that, by Theorem \ref{th-sig}, any elliptic generator of $\Gamma$ in a
set of canonical generators for $\Gamma$ must be conjugate to a power of a
unique elliptic generator of $\Delta$ in a set of canonical generators of
$\Delta$. To determine exactly how elliptic generators of $\Gamma$ and
$\Delta$ related, we can use the following important consequence of the main
result of \cite{Sing1}.

\begin{theorem}
\label{th-Singerman1} Suppose that $\Gamma\leq\Delta$, $\zeta\in\Delta$ is a
canonical generator of order $k$ and let $\rho\colon\Delta\rightarrow
S_{[\Delta:\Gamma]}$ denote the map induced by action of $\Delta$ on the left
cosets of $\Gamma$. Then the number of canonical generators of $\Gamma$
induced by $\zeta$ is equal to the number of cycles of $\rho(\zeta)$ of orders
less than $k$ and the order of these elements are given by $k/k_{i}$ where the
$k_{i}$ run over the lengths of the cycles of $\rho(\zeta)$. Moreover, if
$\zeta^{\prime}$ is any other canonical generator of $\Delta$, then the
canonical generators of $\Gamma$ induced by $\zeta^{\prime}$ are distinct from
the ones induced by $\zeta$.
\end{theorem}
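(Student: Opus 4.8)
The plan is to deduce this from Singerman's theorem on subgroups of Fuchsian groups, which is the ``main result of \cite{Sing1}'' invoked in the statement; the task is essentially to translate that result into the language of canonical generators and coset permutations. First I would recall the precise content of Singerman's construction: given $\Gamma \leq \Delta$ of finite index $N = [\Delta:\Gamma]$, and given the permutation representation $\rho\colon\Delta\rightarrow S_N$ on the left cosets of $\Gamma$, one builds a fundamental region for $\Gamma$ out of $N$ copies of a fundamental region for $\Delta$, glued according to $\rho$. Each period $k$ of $\Delta$, coming from a canonical elliptic generator $\zeta$, corresponds to a cone point of $\mathbb{H}/\Delta$; its preimages in $\mathbb{H}/\Gamma$ are in bijection with the orbits of $\langle\zeta\rangle$ on the cosets, i.e.\ with the cycles of $\rho(\zeta)$, and a cycle of length $k_i$ produces a cone point of order $k/k_i$ in $\mathbb{H}/\Gamma$ (a fixed point, $k_i = 1$, giving a full-order cone point of order $k$; a regular orbit, $k_i = k$, contributing no cone point).

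The key steps, in order, are: (1) Set up $\rho$ and decompose $\rho(\zeta)$ into disjoint cycles $C_1,\dots,C_t$ of lengths $k_1,\dots,k_t$ with $\sum k_i = N$; (2) for each cycle, identify a double coset / orbit representative and show that the stabilizer in $\Gamma$ of the corresponding point of $\mathbb{H}$ (a fixed point of a conjugate of $\zeta$) is generated by a conjugate of $\zeta^{k_i}$, hence has order $k/k_i$ — this is where one needs that $\langle\zeta\rangle$ is the full stabilizer in $\Delta$ of that point, which is part (3) of Theorem \ref{th-sig}; (3) invoke Singerman's theorem to say these are exactly the cone points of $\mathbb{H}/\Gamma$ lying over the cone point of $\mathbb{H}/\Delta$ determined by $\zeta$, and that therefore the canonical generators of $\Gamma$ that are induced by $\zeta$ are precisely one for each cycle of length $< k$ (cycles of length exactly $k$ give order-$1$ ``cone points,'' i.e.\ no elliptic generator); (4) for the ``moreover'' clause, observe that two distinct canonical generators $\zeta,\zeta'$ of $\Delta$ correspond to \emph{distinct} cone points of $\mathbb{H}/\Delta$ by part (3) of Theorem \ref{th-sig}, so their preimage cone points in $\mathbb{H}/\Gamma$ are disjoint, and again by part (3) of Theorem \ref{th-sig} (applied to $\Gamma$) the canonical generators of $\Gamma$ attached to disjoint cone points lie in distinct conjugacy classes, hence are distinct.

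The main obstacle I expect is step (2): matching the purely combinatorial data (cycle lengths of $\rho(\zeta)$) with the geometric data (orders of cone points of $\mathbb{H}/\Gamma$) rigorously, rather than by appeal to a picture. Concretely, one must verify that if $\zeta$ fixes $z_0 \in \mathbb{H}$ and the coset $\Gamma g$ lies in a $\langle\zeta\rangle$-orbit of length $k_i$, then the point $g^{-1} z_0$ — or rather, the corresponding point of $\mathbb{H}/\Gamma$ — has $\Gamma$-stabilizer exactly $\langle g^{-1}\zeta^{k_i} g \rangle \cap \Gamma$ of order $k/k_i$; this requires knowing that no smaller power of $\zeta$ conjugates into $\Gamma$ appropriately, which follows from the orbit having length precisely $k_i$. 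Once this correspondence is pinned down, the count of induced generators and the orders $k/k_i$ fall out immediately, and the disjointness ``moreover'' is a short formal consequence of the uniqueness clause in Theorem \ref{th-sig}. I would present the argument by first stating Singerman's theorem in the needed form as a black box, then doing the cone-point bookkeeping, keeping the geometric construction of the fundamental region for $\Gamma$ in the background rather than reproving it.
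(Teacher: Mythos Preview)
Your proposal is correct and matches the paper's approach: the paper does not actually give a proof of this theorem, but simply states it as ``the following important consequence of the main result of \cite{Sing1}'' and moves on. Your outline is exactly the translation of Singerman's subgroup theorem into the cone-point/canonical-generator language that the paper takes for granted, so you have supplied the details the paper omits rather than diverged from it.
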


In the special case where $\Gamma\vartriangleleft\Delta$, we have the
following result.

\begin{corollary}
Suppose that $\Gamma\vartriangleleft\Delta$, $\zeta\in\Delta$ and $\theta$ is
a canonical generator of $\Gamma$ induced by $\zeta.$ Then the order
$o(\theta)$ is the same for all canonical generators $\theta$ of $\Gamma$
induced by $\zeta$ and $\zeta$ induces $\frac{[\Delta:\Gamma]}{o(\zeta
)/o(\theta)}$ distinct canonical generators of $\Gamma.$ Moreover, if
$\zeta^{\prime}$ is any other canonical generator of $\Delta$, then the
canonical generators of $\Gamma$ induced by $\zeta^{\prime}$ are distinct from
the ones induced by $\zeta$.
\end{corollary}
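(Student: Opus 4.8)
The plan is to feed Theorem~\ref{th-Singerman1} the extra structure that the coset permutation representation $\rho$ acquires once $\Gamma$ is normal in $\Delta$. The single point to establish first is that when $\Gamma\vartriangleleft\Delta$ the set $\Delta/\Gamma$ of left cosets is a group and the left-translation action of $\Delta$ on it factors through $\Delta/\Gamma$, where it is the left regular representation of $\Delta/\Gamma$. Consequently, for any $\zeta\in\Delta$, writing $\bar\zeta=\zeta\Gamma$ and $\ell=o(\bar\zeta)$, the permutation $\rho(\zeta)$ is left multiplication by $\bar\zeta$ on $\Delta/\Gamma$; since $\langle\bar\zeta\rangle$ acts freely on $\Delta/\Gamma$ by left translation, each of its orbits has size $\ell$, so $\rho(\zeta)$ is a product of $[\Delta:\Gamma]/\ell$ disjoint $\ell$-cycles.

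Next I would apply Theorem~\ref{th-Singerman1} with $k=o(\zeta)$. It places the canonical generators of $\Gamma$ induced by $\zeta$ in bijection with the cycles of $\rho(\zeta)$ of length strictly less than $k$, a cycle of length $k_i$ contributing an induced generator of order $k/k_i$. Since here all cycles share the common length $\ell$, every induced generator $\theta$ has the same order $o(\theta)=k/\ell$ --- the first assertion --- and rearranging gives $\ell=o(\zeta)/o(\theta)$. The very existence of an induced generator $\theta$ forces $\ell<k$ (if $\ell=k$ then no cycle is short and nothing is induced). Hence all $[\Delta:\Gamma]/\ell$ cycles are short and Theorem~\ref{th-Singerman1} counts precisely $[\Delta:\Gamma]/\ell=[\Delta:\Gamma]/(o(\zeta)/o(\theta))$ induced canonical generators, as claimed. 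The last sentence of the corollary is the verbatim specialization of the last sentence of Theorem~\ref{th-Singerman1}, so no additional argument is required.

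I do not expect a genuine obstacle: the whole content is the observation that the action of a group on the cosets of a normal subgroup is the regular representation of the quotient, after which Theorem~\ref{th-Singerman1} does all the work. The only points that need care are the degenerate case $\ell=k$ (where no generator is induced, so the displayed count is understood under the standing hypothesis that $\theta$ exists) and keeping straight the distinction between the \emph{length} $k_i=\ell$ of a cycle of $\rho(\zeta)$ and the \emph{order} $k/\ell$ of the canonical generator of $\Gamma$ that it induces.
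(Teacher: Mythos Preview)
Your argument is correct and is precisely the intended derivation: the paper states this result as an immediate corollary of Theorem~\ref{th-Singerman1} without giving a separate proof, and the key observation you supply---that normality makes $\rho$ the regular representation of $\Delta/\Gamma$, so $\rho(\zeta)$ has all cycles of the common length $\ell=o(\zeta\Gamma)$---is exactly what is needed to specialize the theorem. Your handling of the edge case $\ell=k$ is also appropriate.
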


\subsection{The spherical group $K$}

We fix some notation. Let $S$ denote a cyclic $n$-gonal surface of genus
$\sigma$, $\Pi$ a surface group for $S$, $C$ an $n$-gonal group for $S$ and
$\Gamma_{C}$ the covering Fuchsian group of $C$. Also, let $A$ denote the full
automorphism group of $S$, $\Gamma_{A}$ its covering Fuchsian group, $N $ the
normalizer of $C$ in $A$, $\Gamma_{N}$ its covering Fuchsian group. Next let
$K=N/C=\Gamma_{N}/\Gamma_{C}$ and let $\eta\colon\Gamma_{A}\rightarrow A$ and
$\chi\colon\Gamma_{N}\rightarrow K$ denote the canonical quotient maps. The
relations are summarized in this diagram%
\begin{equation}%
\begin{array}
[c]{ccccc}%
\Gamma_{C} & \hookrightarrow & \Gamma_{N} & \hookrightarrow & \Gamma_{A}\\
\downarrow\mathcal{\eta} &  & \downarrow\mathcal{\eta} &  & \downarrow
\mathcal{\eta}\\
C & \hookrightarrow & N & \hookrightarrow & A
\end{array}
\label{eq-cover}%
\end{equation}
and the exact sequences%
\begin{equation}%
\begin{array}
[c]{ccccc}%
\Pi & \hookrightarrow & \Gamma_{A} & \overset{\mathcal{\eta}}%
{\twoheadrightarrow} & A
\end{array}
\label{eq-exact1}%
\end{equation}%
\begin{equation}%
\begin{array}
[c]{ccccc}%
\Gamma_{C} & \hookrightarrow & \Gamma_{N} & \overset{\mathcal{\chi}%
}{\twoheadrightarrow} & K
\end{array}
\label{eq-exact2}%
\end{equation}
Notice that since the group $K=N/C$ acts on the surface $S/C=\mathbb{P}^{1}$,
it follows that $K$ is a finite subgroup of $\mathrm{PSL}{(2,\mathbb{C}),}$
acting on $\mathbb{P}^{1}$ by linear fractional transformations. All such
groups are well known as well as the properties of the corresponding quotient
maps and can be thought of as a special case of Proposition \ref{th-sig}. We summarize.

\begin{theorem}
\label{th-sphere} Suppose that $K$ is a finite subgroup of $\mathrm{PSL}%
{(2,\mathbb{C})}$. Then $K$ is conjugate to one of $C_{k}$, $D_{k}$, $A_{4}$,
$S_{4}$ or $A_{5}$ (where $C_{k}$ denotes the cyclic group of order $k$ and
$D_{k}$ the dihedral group of order $k$). The quotient map $\pi_{K}%
:\mathbb{P}^{1}\rightarrow\mathbb{P}^{1}$ branches over $s$ points with
ramification indices $m_{i}$ for $1\leq i\leq s$. The signature of such a
group is the tuple $(a_{1},\dots,a_{e})$

\begin{itemize}
\item where $e=2,3$, $a_{i}\geq2$

\item $\frac{1}{a_{1}}+\cdots+\frac{1}{a_{e}}>1$

\item $a_{1}=a_{2}$ if $e=2\ $
\end{itemize}

\noindent and any such tuple corresponds to a group. We tabulate all
signatures in Table 1. Moreover, two groups in the table are isomorphic if and
only if they have the same signature.%
\[%
\begin{tabular}
[c]{c}%
$\text{\textbf{Table 1}}$\\
$%
\begin{tabular}
[c]{||l|l||}\hline\hline
Group & Signature\\\hline
$C_{k}$ & $(k,k),k\geq2$\\\hline
$D_{k}$ & $(2,2,k),k\geq2$\\\hline
$A_{4}$ & $(2,3,3)$\\\hline
$S_{4}$ & $(2,3,4)$\\\hline
$A_{5}$ & $(2,3,5)$\\\hline\hline
\end{tabular}
\ \ \ \ $%
\end{tabular}
\ \
\]

\end{theorem}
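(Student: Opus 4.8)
The plan is to reduce to the classical enumeration of the finite rotation groups of the sphere and then to solve a Diophantine relation. First I would show that, after conjugation, $K$ is a finite subgroup of $\mathrm{PSU}(2)\cong\mathrm{SO}(3)$: lifting each coset of the scalars that constitutes an element of $K\le\mathrm{PGL}(2,\mathbb{C})$ to a matrix $\hat g\in\mathrm{GL}(2,\mathbb{C})$ and averaging the rays of the Hermitian forms $\hat g^{\ast}h_{0}$ inside the convex cone of positive-definite Hermitian forms produces a positive-definite form $h$ that is $K$-invariant modulo scalars; then $K$ lies in the projective unitary group of $h$, which is conjugate to $\mathrm{PSU}(2)$. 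Thus, up to conjugacy, $K$ acts by rotations on $S^{2}\cong\mathbb{P}^{1}$, and in particular every nontrivial element of $K$ has exactly two fixed (``pole'') points.

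Next, put $N=|K|$ and count the pairs $(g,x)$ with $1\neq g\in K$ and $gx=x$. On one hand this number is $2(N-1)$; on the other, grouping the poles into $K$-orbits $\mathcal{O}_{1},\dots,\mathcal{O}_{e}$ with stabilizer orders $a_{i}=|K_{x_{i}}|$ and sizes $|\mathcal{O}_{i}|=N/a_{i}$, it equals $\sum_{i}\tfrac{N}{a_{i}}(a_{i}-1)$. Dividing by $N$ gives
\[
2-\frac{2}{N}=\sum_{i=1}^{e}\left(1-\frac{1}{a_{i}}\right),
\]
which is exactly the Riemann--Hurwitz relation for $\pi_{K}\colon\mathbb{P}^{1}\to\mathbb{P}^{1}/K=\mathbb{P}^{1}$ and the instance of Proposition \ref{th-sig} promised in the statement; the periods $a_{i}$ are the ramification indices $m_{i}$. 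Since each $a_{i}\ge2$, the right-hand side is at least $e/2$ while the left-hand side is $<2$, so $e\le3$; and $e\le1$ is impossible once $N\ge2$. Hence $e\in\{2,3\}$, giving the claimed shape of the signature.

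For $e=2$ the relation reads $\tfrac1{a_{1}}+\tfrac1{a_{2}}=\tfrac2N$, and since $a_{i}\mid N$ this forces $a_{1}=a_{2}=N$; the group then fixes the two poles globally, hence consists of rotations about one axis and is cyclic, with signature $(N,N)$. For $e=3$ the relation reads $\tfrac1{a_{1}}+\tfrac1{a_{2}}+\tfrac1{a_{3}}=1+\tfrac2N>1$, and ordering $a_{1}\le a_{2}\le a_{3}$ and eliminating cases in turn forces $(a_{1},a_{2},a_{3})$ to be one of $(2,2,k)$, $(2,3,3)$, $(2,3,4)$, $(2,3,5)$; the relation then determines $N$ in each case. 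This produces exactly the list of signatures of Table 1, and the side conditions ($e=2,3$; $a_{i}\ge2$; $a_{1}=a_{2}$ when $e=2$; and $\sum 1/a_{i}>1$ in the case $e=3$) are precisely what the argument yields.

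Finally I would settle existence and the isomorphism claim. Existence is by exhibiting the standard realizations in $\mathrm{SO}(3)$ --- rotations about an axis, the rotation symmetries of a regular $k$-gon, and the rotation groups of the tetrahedron, the cube/octahedron, and the dodecahedron/icosahedron --- and computing the stabilizers of vertices, edge-midpoints and face-centres to read off the periods; these groups are $C_{k}$, $D_{k}$, $A_{4}$, $S_{4}$, $A_{5}$ respectively. For uniqueness one notes that by Proposition \ref{th-sig} such a $K$ is generated by a vector $(g_{1},\dots,g_{e})$ with $o(g_{i})=a_{i}$ and $\prod g_{i}=1$; I would then argue that for each admissible signature this combinatorial datum pins down the isomorphism type of $K$ (the cyclic and dihedral cases are immediate, and the three exceptional cases are the classical facts that a group of the corresponding order generated by such a $(2,3,3)$-, $(2,3,4)$- or $(2,3,5)$-triple must be $A_{4}$, $S_{4}$ or $A_{5}$). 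Consequently equal signatures give isomorphic groups, and the converse follows because the orders $N$ recovered from the Riemann--Hurwitz relation already distinguish all the non-isomorphic entries of the table. I expect the main obstacle to be this last uniqueness step for the three exceptional groups --- turning ``generated by a triple of the prescribed orders whose product is trivial'' into a complete determination of the group --- together with the care required in the averaging argument because $K$ sits in $\mathrm{PGL}(2,\mathbb{C})$ rather than in $\mathrm{GL}(2,\mathbb{C})$.
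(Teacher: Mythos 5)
The paper does not actually prove Theorem \ref{th-sphere}: it is invoked as the classical classification of finite subgroups of $\mathrm{PSL}(2,\mathbb{C})$ acting on the sphere (``All such groups are well known \dots We summarize''), so there is no argument in the text to compare yours against. Your proposal is the standard classical proof, and its core is sound: the reduction to $\mathrm{SO}(3)$, the count of pairs $(g,x)$ with $g\neq 1$ and $gx=x$ yielding $2-\tfrac{2}{N}=\sum_{i}\bigl(1-\tfrac{1}{a_{i}}\bigr)$, the bound $e\in\{2,3\}$, and the case analysis reproducing Table 1 are all correct, and you correctly note that this identity is exactly the Riemann--Hurwitz relation for $\pi_{K}$, tying the theorem to Proposition \ref{th-sig} as the paper suggests.

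Two points deserve tightening. First, the averaging step: summing the forms $\hat g^{\ast}h_{0}$ over arbitrary lifts is not literally $K$-invariant because the lifts are only defined up to scalars; the clean fix is to pass to the full preimage of $K$ in $\mathrm{SL}(2,\mathbb{C})$, which is a finite group (of order $|K|$ or $2|K|$), average the Hermitian form over that finite group, and then descend. Second, your closing sentence --- that the orders $N$ ``already distinguish all the non-isomorphic entries of the table'' --- is not quite true: the group of signature $(2k,2k)$ and the group of signature $(2,2,k)$ both have order $2k$, yet are non-isomorphic (one is cyclic, the other is not for $k\geq 2$); so the converse direction of the isomorphism claim needs cyclicity, or the generating-vector argument you sketch, rather than the order alone. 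Neither issue affects the validity of the overall approach.
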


As suggested by Theorem \ref{th-sig} the interplay among the signatures of
$\Gamma_{C},$ $\Gamma_{N},$ $\Gamma_{A}$ or the signatures of $C,$ $N,$ $A, $
and the epimorphisms $\chi$ and $\eta$ are closely related to the ramification
properties of the quotient maps $S/C\rightarrow S/N\rightarrow S/A$ or
$\mathbb{H}/\Gamma_{C}\rightarrow\mathbb{H}/\Gamma_{N}\rightarrow
\mathbb{H}/\Gamma_{A}$ among the quotient surfaces. This relationship is
explained in more detail in Sections \ref{sec-top} and \ref{sec-bot}. In
Section \ref{sec-const} we discuss how the action of $A$ on the ramification
points of $S\rightarrow S/A$ is related to induced generators and the
ramification of $\mathbb{H}/\Gamma\rightarrow\mathbb{H}/\Delta.$

\subsection{Fuchsian group invariants}

We fix some more notation. We denote the signatures of a pair $\Gamma<\Delta$
(e.g., $\Gamma_{N}<\Gamma_{A})$ by $(m_{1},m_{2},\dots,m_{s})$ and
$(n_{1},n_{2},\dots,n_{t})$ respectively. Let $\mathcal{G}_{1}=\left\{
\theta_{1},\dots,\theta_{s}\right\}  $ and $\mathcal{G}_{2}=\left\{  \zeta
_{1},\dots,\zeta_{t}\right\}  $ be sets of canonical generators of
$\Gamma<\Delta$ respectively. Important Fuchsian group invariants and
invariants of pairs may be read off from the signatures.\newline

For single groups we have.

\begin{itemize}
\item The area of a fundamental region: $A(\Gamma)=2\pi\mu(\Gamma)$ where:%
\[
\mu(\Gamma)=-2+\sum_{j=1}^{s}\left(  1-\frac{1}{m_{j}}\right)  =\left(
s-2\right)  -\sum_{j=1}^{s}\frac{1}{m_{j}}.
\]
For completeness, when the genus $\sigma=\sigma(\Gamma)$ is greater than zero%
\[
\mu(\Gamma)=2(\sigma-1)+\sum_{j=1}^{s}\left(  1-\frac{1}{m_{j}}\right)  .
\]

\item \emph{Teichm\"{u}ller dimension} $d(\Gamma)$ of $\Gamma$: the dimension
of the Teichm\"{u}ller space of Fuchsian groups with signature $\mathcal{S}%
(\Gamma)$ given by%
\[
d(\Gamma)=s-3=\left\vert \mathcal{G}_{1}\right\vert -3.
\]
For completeness, when the genus $\sigma=\sigma(\Gamma)$ is greater than zero
we have%
\[
d(\Gamma)=3(\sigma-1)+s.
\]

\end{itemize}

For pairs we combine the invariants.

\begin{itemize}
\item For a finite index pair $\Gamma<\Delta$, we have
\[
\lbrack\Delta:\Gamma]=\mu(\Gamma)/\mu(\Delta)
\]

\item For finite index pair $\Gamma\leq\Delta$, we call the quantity
\[
d(\Gamma,\Delta)=d(\Gamma)-d(\Delta)
\]
the \emph{Teichm\"{u}ller codimension} of $\Gamma<\Delta$. If both groups have
genus zero then $d(\Gamma,\Delta)=\left\vert \mathcal{G}_{1}\right\vert
-\left\vert \mathcal{G}_{2}\right\vert $.
\end{itemize}

\section{The sequence $\Gamma_{C}\hookrightarrow\Gamma_{N}\twoheadrightarrow
K$ \label{sec-top}}

First we consider any exact sequence $\Gamma_{C}\hookrightarrow\Gamma
_{N}\twoheadrightarrow K$ where we only assume $\Gamma_{C}\trianglelefteq
\Gamma_{N}$ is a pair of genus zero, finite area Fuchsian groups. We are not
assuming any map $\Gamma_{C}\twoheadrightarrow C$. The induced map
$\chi:\Gamma_{N}\rightarrow K$ is called a $K\ $map.

\begin{definition}
Given an exact sequence $\Gamma_{C}\hookrightarrow\Gamma_{N}\twoheadrightarrow
K$ arising from a pair $\Gamma_{C}\trianglelefteq\Gamma_{N}$ of genus zero,
finite area Fuchsian groups, we say that a canonical generator $\theta
\in\Gamma_{N}$ is a $K$-generator if it has non-trivial image under the map
$\chi\colon\Gamma_{N}\rightarrow\Gamma_{N}/\Gamma=K$.
\end{definition}

\begin{proposition}
\label{pr-images1} Let $\Gamma_{C}\hookrightarrow\Gamma_{N}\twoheadrightarrow
K$ be any exact sequence defined by a pair $\Gamma_{C}\trianglelefteq
\Gamma_{N}$ of genus zero, finite area Fuchsian groups. Then, $K$ is a group
acting on the sphere with signature given in Theorem \ref{th-sphere}. The
images of the canonical generators of $\Gamma_{N}$ under the map $\chi
\colon\Gamma_{N}\rightarrow K$ satisfy the relations of Theorem \ref{th-sig}
for the signature of $K$. In particular, if $K$ is not trivial there are
exactly 2 $(K=C_{k})$ or 3 $(K\neq C_{k})$ canonical generators for
$\Gamma_{N}$ with non-trivial image under $\chi$.
\end{proposition}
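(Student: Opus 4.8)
The plan is to work directly with the quotient map $\chi\colon\Gamma_N\to K$ and exploit the fact that $\Gamma_C=\ker\chi$ is torsion-free-free only in the sense that it is itself a genus-zero Fuchsian group; the real content is that $\Gamma_N/\Gamma_C\cong K$ is a \emph{spherical} group, and that almost all canonical generators of $\Gamma_N$ must die in the quotient. First I would invoke Theorem \ref{th-sig} for $\Gamma_N$ to obtain canonical generators $\gamma_1,\dots,\gamma_r$ with $\prod\gamma_i=1$ and $\gamma_i^{m_i}=1$, and set $g_i=\chi(\gamma_i)$. Since $\chi$ is a surjective homomorphism, the $g_i$ generate $K$, satisfy $\prod g_i=1$, and each $g_i$ has order dividing $m_i$. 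That $K$ acts on the sphere $S/C=\mathbb{P}^1$ with a signature from Table 1 is exactly Theorem \ref{th-sphere}, once one knows $K$ is a finite subgroup of $\mathrm{PSL}(2,\mathbb{C})$: this is where I would use that $\Gamma_N/\Gamma_C$, being a quotient of a genus-zero cocompact Fuchsian group by a genus-zero normal subgroup, is the deck group of a branched cover $\mathbb{P}^1\to\mathbb{P}^1$, hence a spherical triangle/dihedral/cyclic group. (Alternatively one can cite Theorem \ref{th-Singerman1}/its corollary to control the orbit genus of $\Gamma_C$ and force it to be $0$.)

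The heart of the argument is the ``in particular'' clause: exactly $2$ or $3$ of the $g_i$ are nontrivial. I would argue this via the Riemann–Hurwitz / area relation. We have $\mu(\Gamma_C)=[\Gamma_N:\Gamma_C]\,\mu(\Gamma_N)=|K|\,\mu(\Gamma_N)$. Now I want to compute $\mu(\Gamma_C)$ from the $K$-side. The key structural fact, which follows from Theorem \ref{th-Singerman1} applied to the normal inclusion $\Gamma_C\trianglelefteq\Gamma_N$, is that a canonical generator $\gamma_i$ of order $m_i$ with $\chi(\gamma_i)=g_i$ of order $d_i\mid m_i$ contributes to $\Gamma_C$ exactly $|K|/d_i$ canonical generators, each of order $m_i/d_i$ — and in particular contributes \emph{nothing} to the periods of $\Gamma_C$ precisely when $d_i=m_i$, i.e.\ when $\gamma_i$ maps to an element of full order $m_i$ in $K$... but more to the point, the generators $\gamma_i$ with $g_i=1$ (so $d_i=1$) each contribute $|K|$ canonical generators of $\Gamma_C$ of order $m_i$, while the $K$-generators contribute the branching of $K$ itself. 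Plugging into $\mu(\Gamma_C)=-2+\sum(\text{periods of }\Gamma_C)$ and comparing with $|K|(-2+\sum_i(1-1/m_i))$ forces the number of $K$-generators to realize the signature of $K$, which by the first part of the proposition has length $e\in\{2,3\}$ with $e=2$ iff $K=C_k$.

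Concretely, I would phrase the last step as: the images $\{g_i : g_i\neq 1\}$ form a generating vector for $K$ in the sense of Theorem \ref{th-sig} for $K$'s signature $(a_1,\dots,a_e)$; since $K\leq\mathrm{PSL}(2,\mathbb{C})$ is spherical, $e=2$ if $K=C_k$ and $e=3$ otherwise (Theorem \ref{th-sphere}). It remains only to see that no nontrivial $g_i$ can be ``redundant'' — i.e.\ that the number of indices with $g_i\neq 1$ equals $e$ exactly, not merely is at least $e$. This is the step I expect to be the main obstacle, and it is precisely the place where the orbit-genus-zero hypothesis on $\Gamma_C$ is essential: if $\Gamma_N$ had an ``extra'' canonical generator mapping nontrivially to $K$, one could use the braid moves of Remark \ref{rk-braid} together with the corollary to Theorem \ref{th-Singerman1} to show the induced generators of $\Gamma_C$ would force $\sigma(\Gamma_C)>0$, contradicting that $\Gamma_C$ has genus zero. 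So the clean way to finish is: count, via the corollary, the total number of canonical generators of $\Gamma_C$ induced from all the $\gamma_i$, combine with the genus formula $\sigma(\Gamma_C)=0$ and Riemann–Hurwitz for $\chi$, and solve for the number of nontrivial $g_i$, getting exactly $e$. I would present the area/Riemann–Hurwitz bookkeeping as the core computation and relegate the genus-zero forcing to a short remark, since the underlying classification of spherical groups (Theorem \ref{th-sphere}) already packages most of the work.
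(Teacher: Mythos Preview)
The paper states Proposition~\ref{pr-images1} without proof; this is an overview article and the full arguments are deferred to \cite{Brou-Woo2}. So there is no ``paper's proof'' to compare against, and your proposal must be judged on its own merits.

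Your approach is correct, and its core is the right idea: since both $\Gamma_C$ and $\Gamma_N$ have orbit genus zero, the quotient $\mathbb{H}/\Gamma_C \to \mathbb{H}/\Gamma_N$ is a Galois branched cover $\mathbb{P}^1\to\mathbb{P}^1$ with deck group $K$, so $K$ embeds in $\mathrm{PSL}(2,\mathbb{C})$ and Theorem~\ref{th-sphere} applies. The branch points of this cover are exactly the cone points of $\mathbb{H}/\Gamma_N$ at which the local monodromy in $K$ is nontrivial, i.e.\ exactly those canonical generators $\gamma_i$ with $\chi(\gamma_i)\neq 1$, and the ramification index there equals $o(\chi(\gamma_i))$. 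Hence the number of $K$-generators equals the number of branch points of the $K$-action on $\mathbb{P}^1$, which by Table~1 is $2$ if $K$ is cyclic and $3$ otherwise.

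That said, you are doing more work than needed. The comparison $\mu(\Gamma_C)=|K|\,\mu(\Gamma_N)$ together with the induced-generator count from the corollary to Theorem~\ref{th-Singerman1} does recover the conclusion, but it is just a repackaging of the Riemann--Hurwitz formula for $\mathbb{P}^1\to\mathbb{P}^1$, which is already baked into the classification of Theorem~\ref{th-sphere}. Likewise, the braid-move argument you sketch to exclude ``extra'' nontrivial $g_i$ is unnecessary: once the branch locus of $K$ on the sphere is identified with the set of $K$-generators, its cardinality is forced to be $2$ or $3$ by Table~1, with no further bookkeeping required. I would strip the proposal down to the geometric identification (branch points $\leftrightarrow$ $K$-generators) and a single appeal to Theorem~\ref{th-sphere}; the area computation and the braid discussion can be dropped.
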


By Remark \ref{rk-braid} we may permute the periods of $\Gamma_{N}$ so that
the $K$-generators occur first and the signature has the format\ $(m_{1}%
,\dots,m_{s})=(a_{1}b_{1},$ $a_{2}b_{2},$ $a_{3}b_{3},$ $m_{4},\dots,m_{s})$
if $K$ has signature $(a_{1},a_{2},a_{3})$ and $(m_{1},\dots,m_{s}) $ $=$
$(a_{1}b_{1},a_{2}b_{2},$ $m_{3},\dots,m_{s})$ $=$ $(kb_{1},kb_{2},$
$m_{3},\dots,m_{s})$ if $K$ has signature $(a_{1},a_{2})=(k,k).$ If
$\Gamma_{N}$ has a signature of either form, after permutation, we say that
$\Gamma_{N}$ has a $K$-compatible signature. We have the following converse to
Proposition \ref{pr-images1}, which follows directly from Lemma $5.8$ of
\cite{Brou-Woo1.5}.

\begin{proposition}
\label{pr-images2} Let $\left\{  \theta_{1},\dots,\theta_{s}\right\}  $ be a
set of canonical generators corresponding to the $K$-compatible signature
$(a_{1}b_{1},a_{2}b_{2},a_{3}b_{3},$ $m_{4},\dots,m_{s}).$ Then, there is an
essentially unique epimorphism $\chi:\Gamma_{N}\rightarrow K$ such that
$(x_{1},x_{2},x_{3})=$ $(\chi(\theta_{1}),\chi(\theta_{2}),\chi(\theta_{3}))$
is a generating $(a_{1},a_{2},a_{3})$-vector of $K$. I.e., given two
epimorphisms $\chi_{1}\colon\Gamma_{N}\rightarrow K,\chi_{2}\colon\Gamma
_{N}\rightarrow K$ such that $(\chi_{s}(\theta_{1}),\chi_{s}(\theta_{2}%
),\chi_{s}(\theta_{3}))$ are $(a_{1},a_{2},a_{3})$-vectors for $s=1,2$ then
$\chi_{2}=\omega\circ\chi_{1}$ for some $\omega\in\mathrm{Aut}(K).$ A similar
statement holds for the cyclic case.
\end{proposition}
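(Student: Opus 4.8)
The plan is to reduce Proposition~\ref{pr-images2} to the cited Lemma~5.8 of \cite{Brou-Woo1.5} by checking that its hypotheses are met and then translating its conclusion into the language of canonical generators and $K$-maps used here. First I would recall the setup: $\Gamma_N$ is a genus-zero Fuchsian group, so by Theorem~\ref{th-sig} it is presented on canonical generators $\theta_1,\dots,\theta_s$ with $\theta_j^{m_j}=1$ and $\prod_j\theta_j=1$, and it is the existence of this standard presentation that makes specifying a homomorphism $\chi\colon\Gamma_N\to K$ the same thing as specifying a tuple $(\chi(\theta_1),\dots,\chi(\theta_s))$ in $K$ subject to the order conditions and the long relation. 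Because the signature is assumed $K$-compatible, the first three periods factor as $a_ib_i$ with $(a_1,a_2,a_3)$ equal to the signature of $K$ from Table~1; I would set $x_i=\chi(\theta_i)$ for $i\le 3$, require $\chi(\theta_j)=1$ for $j\ge 4$, and note that the long relation then collapses to $x_1x_2x_3=1$. Thus an epimorphism $\chi$ of the desired type is \emph{exactly} a generating $(a_1,a_2,a_3)$-vector $(x_1,x_2,x_3)$ of $K$ with $o(x_i)\mid a_i$, and in fact $o(x_i)=a_i$ since no proper divisor tuple of a spherical signature is again a spherical signature giving the same group — so the three elements are forced to have the right orders.

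Next I would invoke the structure of finite subgroups of $\mathrm{PSL}(2,\mathbb C)$: for each $K$ in Table~1 the generating $(a_1,a_2,a_3)$-vectors (respectively $(k,k)$-vectors in the cyclic case) form a single orbit under $\mathrm{Aut}(K)$. For $A_4$, $S_4$, $A_5$ this is the classical rigidity statement about $(2,3,3)$-, $(2,3,4)$-, $(2,3,5)$-generation — each such triple is unique up to automorphism, which is precisely what Lemma~5.8 of \cite{Brou-Woo1.5} packages. For $D_k$ with signature $(2,2,k)$ one checks directly that any pair of reflections whose product is a rotation of order $k$ is carried to any other such pair by an automorphism of $D_k$, and for $C_k$ any generator of $C_k$ is carried to any other by an automorphism, which handles the cyclic case asserted at the end. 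Given this, if $\chi_1,\chi_2$ are two epimorphisms of the stated form, their associated vectors $(\chi_s(\theta_1),\chi_s(\theta_2),\chi_s(\theta_3))$ lie in the same $\mathrm{Aut}(K)$-orbit, so there is $\omega\in\mathrm{Aut}(K)$ with $\omega(\chi_1(\theta_i))=\chi_2(\theta_i)$ for $i=1,2,3$; since $\chi_1,\chi_2$ kill $\theta_4,\dots,\theta_s$ and the $\theta_j$ generate $\Gamma_N$, the homomorphisms $\omega\circ\chi_1$ and $\chi_2$ agree on generators, hence $\chi_2=\omega\circ\chi_1$. Existence of at least one such $\chi$ follows from Theorem~\ref{th-sphere}, which guarantees a generating vector realizing each tabulated signature.

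The main obstacle is the rigidity input — the claim that generating triples for each of the spherical groups are unique up to $\mathrm{Aut}(K)$ — but this is exactly the content of the cited Lemma~5.8, so in the write-up I would simply quote it for $A_4,S_4,A_5$ (and supply the easy direct arguments for $C_k$ and $D_k$, where ``essentially unique'' must be read carefully: for $D_k$ one may have to allow composing with the outer automorphism, and for $C_k$ the choice of which of the two $K$-generators maps to a fixed generator introduces the permutation already flagged in Remark~\ref{rk-braid}). A secondary point needing care is the phrase ``essentially unique'': I would make explicit that uniqueness is only up to $\mathrm{Aut}(K)$ and up to the braid moves of Remark~\ref{rk-braid} that permute periods of equal value, and that the assignment $\chi(\theta_j)=1$ for $j\ge 4$ is part of the definition of ``epimorphism of the stated form'' rather than a deduction — it is what being a $K$-map with $K$-compatible signature means once the $K$-generators have been moved to the front.
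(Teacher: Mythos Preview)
Your proposal is correct and matches the paper's approach: the paper simply states that the proposition ``follows directly from Lemma~5.8 of \cite{Brou-Woo1.5}'' without further argument, and your plan is precisely to unwind that citation --- reduce the question about $K$-maps to the rigidity of generating $(a_1,a_2,a_3)$-vectors for the spherical groups and then invoke the lemma. Your added care about what ``essentially unique'' means (up to $\mathrm{Aut}(K)$, with $\chi(\theta_j)=1$ for $j\ge e+1$ built into the hypothesis as in equation~\ref{eq-Kmap}) is appropriate and more explicit than the paper itself.
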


\begin{remark}
Suppose we are given a generating $(a_{1},\ldots,a_{e})$-vector $(x_{1}%
,\ldots,x_{e})$ of $K.$ Then a $K$ map $\chi:\Gamma_{N}\rightarrow K $ may be
defined by%
\begin{equation}
\chi(\theta_{i})=x_{i},\text{ }1\leq i\leq e,\text{ }\chi(\theta_{i})=1,\text{
}e+1\leq i\leq s.\label{eq-Kmap}%
\end{equation}
Once the factorization $(a_{1}b_{1},\ldots,$ $a_{e}b_{e},m_{e+1},\dots,m_{s})$
is fixed then the kernel $\Gamma_{C}$ of the associated sequence $\Gamma
_{C}\hookrightarrow\Gamma_{N}\twoheadrightarrow K$ is unique.
\end{remark}

\subsection{\label{subsec-findseq}Finding $\Pi\hookrightarrow\Gamma
_{C}\twoheadrightarrow C$}

We assume that our $K$ map $\chi:\Gamma_{N}\rightarrow K$ is given as in
equation \ref{eq-Kmap}. We want to know when a $K$ map arises from the
normalizer of a cyclic $n$-gonal action. To this end let us denote by
$\Gamma_{C}$ the kernel of $\chi$ so that we have an exact sequence of the
form.%
\[%
\begin{array}
[c]{ccccc}%
\Gamma_{C} & \hookrightarrow & \Gamma_{N} & \overset{\mathcal{\chi}%
}{\twoheadrightarrow} & K.
\end{array}
\]
Let $\left\{  \xi_{1},\ldots,\xi_{r}\right\}  $ be an ordered set of canonical
generators for $\Gamma_{C}$. The canonical generators of $\Gamma_{C}$ are in
1-1 correspondence to the branch points $\mathbb{H\rightarrow H}/\Gamma_{C}$
and $K$ permutes these branch points. The canonical generators of $\Gamma_{N}$
give rise to $K$-orbits of $C$ branch points as follows. The $K$-generators
correspond to orbits of size less than $\left\vert K\right\vert $ and the
other orbits are regular $K$-orbits. It follows that the branch points of
$\Gamma_{C}$ are: $\left\vert K\right\vert /a_{i}$ branch points of period
$b_{i}$ (unless $b_{i}=1)$ for each $i,$ $1\leq i\leq e,$ (singular
$K$-orbits) and $\left\vert K\right\vert $ branch points of period $m_{j}$ for
each $j,$ $e+1\leq j\leq s$ (regular $K$-orbits). Next we need a map
$\phi:\Gamma_{C}\rightarrow C$ where $C$ is a cyclic group such that $\Pi=$
$\ker\phi$ is torsion free. Define $z_{i}\in C$ by
\begin{equation}
z_{i}=\phi(\xi_{i})\label{eq-zi}%
\end{equation}
so that $(z_{1},\dots,z_{r})$ is a generating $\mathcal{S}\left(  \Gamma
_{C}\right)  $-vector for the $C$-action. According to \cite{Har}, in order
that the vector exist and $\Pi$ be torsion free, we must have:

\begin{itemize}
\item $%
{\displaystyle\prod\limits_{i=1}^{r}}
z_{i}=1,$

\item $o(\xi_{i})=o(z_{i}),$

\item $\left\vert C\right\vert =\operatorname{lcm}(o(\xi_{1}),\ldots,o(\xi
_{r}))=\operatorname{lcm}(b_{1},\ldots,b_{e},m_{e+1},\ldots,m_{s}),$

\item some additional constraints on the periods $o(\xi_{1}),\ldots,o(\xi
_{r})$ given in Harvey's work \cite{Har}.
\end{itemize}

\noindent We now fix $C$ to have order $\operatorname{lcm}(b_{1},\ldots
,b_{e},m_{e+1},\ldots,m_{s}),$ and assume the constraints in the fourth bullet
above. Then, the set
\begin{equation}
X=\left\{  (z_{1},\dots,z_{r}):o(z_{i})=o(\xi_{i}),%
{\displaystyle\prod\limits_{i=1}^{r}}
z_{i}=1\right\} \label{eq-defofX}%
\end{equation}
of generating $\mathcal{S}\left(  \Gamma_{C}\right)  $-vectors is non-empty.
The set $X$ allows us to enumerate the epimorphisms $\phi:\Gamma
_{C}\rightarrow C$ since $\phi\rightarrow(\phi(\xi_{1}),\ldots,\phi(\xi_{r}))$
is a 1-1 correspondence. The group $\mathrm{Aut}(C)$ acts without fixed points
on the epimorphisms by $(\omega,\phi)\rightarrow\omega\circ\phi,$ this action
is transferred to $X$ by $(\omega,(z_{1},\dots,z_{r}))\rightarrow(\omega
(z_{1}),\dots,\omega(z_{r})).$ The possible kernels $\Pi\ $are in 1-1
correspondence with the $\mathrm{Aut}(C)$ orbits on $X,$ a finite computable set.

Next we need to determine when the homomorphism $\phi$ extends to a
homomorphism $\psi:\Gamma_{N}\rightarrow\widetilde{K}$ such that

\begin{itemize}
\item $\widetilde{K}$ is an overgroup of $C$ such that $C\vartriangleleft
\widetilde{K}$ and $\widetilde{K}/C\backsimeq K$

\item $\psi$ restricted to $\Gamma_{C}$ is $\phi:\Gamma_{C}\rightarrow C$
\end{itemize}

\noindent The group $\widetilde{K}$ will equal $N$ when identified with a
subgroup of $A$. To show that the two bullets hold, it suffices to show that
$\Pi$ is normal in $\Gamma_{N},$ for then we may take, abstractly,
$\widetilde{K}=\Gamma_{N}/\Pi.$ It does not give us $\widetilde{K}$ concretely
but suffices to show the extendability $\psi:\Gamma_{N}\rightarrow
\widetilde{K}.$

To find restrictions on the $z_{i}$ that will guarantee that $\Pi$ is normal
in $\Gamma_{N},$ we shall employ the methods in \cite{Brou-Woo1}. For any
$x\in\Gamma_{N}$ define $\phi_{x}:\Gamma_{C}\rightarrow C$ by $\phi_{x}%
(\gamma)=\phi(x\gamma x^{-1}).$ The kernel of $\phi_{x}$ is $x^{-1}\Pi x$ and
hence $\Pi$ is normal in $\Gamma_{N}$ if and only $\mathrm{\ker}\left(
\phi_{x}\right)  =\Pi$ for all $x\in\Gamma_{N}.$ But $\phi_{x}$ and $\phi$
have the same kernel if and only if there is an $\omega_{x}\in\mathrm{Aut}(C)$
such that $\phi_{x}=\omega_{x}\circ\phi$ or
\begin{equation}
\phi(x\gamma x^{-1})=\phi_{x}(\gamma)=\omega_{x}\left(  \phi(\gamma)\right)
,\gamma\in\Gamma_{C}.\label{eq-omega}%
\end{equation}
We then have for $x,y\in\Gamma_{N}$ and $\gamma\in\Gamma_{C}$
\begin{align*}
\omega_{xy}\left(  \phi(\gamma)\right)   &  =\phi_{xy}(\gamma)\\
&  =\phi(xy\gamma y^{-1}x^{-1})\\
&  =\omega_{x}(\phi(y\gamma y^{-1}))\\
&  =\omega_{x}(\omega_{y}(\phi(\gamma))
\end{align*}
and so $\omega_{xy}=\omega_{x}\circ\omega_{y},$ and thus $x\rightarrow
\omega_{x}$ is a homomorphism $\Gamma_{N}\rightarrow\mathrm{Aut}(C).$ Since
$C$ is abelian then $\omega_{x}=id$ for $x\in\Gamma_{C}$ and $x\rightarrow
\omega_{x}$ factors through $K,$ $g\rightarrow\omega_{g},$ $g\in K$.

\begin{remark}
\label{rk-ab}Observe that the homomorphisms $K\rightarrow\mathrm{Aut}(C)$ are
quite limited, since $\mathrm{Aut}(C)$ is abelian. Thus $\omega:K\rightarrow
\mathrm{Aut}(C)$ factors through the abelianization $\omega:K_{ab}%
\rightarrow\mathrm{Aut}(C).$ The abelianizations are given in Table 2.%
\[%
\begin{tabular}
[c]{c}%
$\text{\textbf{Table }}$\textbf{2}\\
$%
\begin{tabular}
[c]{||l|l|l||}\hline\hline
Group & Signature & Abelianization\\\hline
$C_{k}$ & $(k,k),k\geq2$ & $\mathbb{Z}_{k}$\\\hline
$D_{k}$ & $(2,2,k),k\geq2,$ $k$\textrm{\ even} & $\mathbb{Z}_{2}%
\times\mathbb{Z}_{2}$\\\hline
$D_{k}$ & $(2,2,k),k\geq3,$ $k$ \textrm{odd} & $\mathbb{Z}_{2}$\\\hline
$A_{4}$ & $(2,3,3)$ & $\mathbb{Z}_{2}\times\mathbb{Z}_{2}$\\\hline
$S_{4}$ & $(2,3,4)$ & $\mathbb{Z}_{2}$\\\hline
$A_{5}$ & $(2,3,5)$ & $\left\langle id\right\rangle $\\\hline\hline
\end{tabular}
$%
\end{tabular}
\]

\end{remark}

\begin{remark}
\label{rk-autCord2}From Table 2 we see that for the non-cyclic case we only
need to consider automorphisms of order 2. Let us write these down. For a
cyclic group $\mathbb{Z}_{n}$ the automorphism group is the group of units
$\mathbb{Z}_{n}^{\ast}$ which in turn is given by $\mathbb{Z}_{n}^{\ast}=%
{\displaystyle\prod\limits_{j}}
\mathbb{Z}_{p_{j}^{e_{j}}}^{\ast}$ where $n=%
{\displaystyle\prod\limits_{j}}
p_{j}^{e_{j}},$ since the Sylow subgroups are cyclic and invariant. The
automorphisms of order dividing 2, and their fixed point subgroups are
important to our analysis in Section \ref{sec-samp}. These automorphisms are
given by $x\rightarrow ax$ where $a^{2}=1\operatorname{mod}n.$ According to
the above decompositions we just need to determine the automorphisms for
$n=p^{e}$ a prime power. The automorphisms and their fixed points for the
various prime power cases are given in the Table 3.
\[%
\begin{tabular}
[c]{c}%
$\text{\textbf{Table }}$\textbf{3}\\
$%
\begin{tabular}
[c]{||l|l|l||}\hline\hline
$p^{e}$ & $a$ & fixed point subgroup\\\hline
$\mathrm{odd}$ $p$ & $1$ & $\mathbb{Z}_{p^{e}}$\\\hline
$\mathrm{odd}$ $p$ & $-1$ & $\left\langle 0\right\rangle $\\\hline
$2^{e},e\geq1$ & $1$ & $\mathbb{Z}_{2^{e}}$\\\hline
$2^{e},e\geq2$ & $-1$ & $2^{e-1}\mathbb{Z}_{2^{e}}$\\\hline
$2^{e},e\geq3$ & $2^{e-1}+1$ & $2\mathbb{Z}_{2^{e}}$\\\hline
$2^{e},e\geq3$ & $2^{e-1}-1$ & $2^{e-1}\mathbb{Z}_{2^{e}}$\\\hline\hline
\end{tabular}
$%
\end{tabular}
\]
The results in the table are derived by considering $p^{e}|(a-1)(a+1).$
\end{remark}

Now let us compute the corresponding action of $K$ on $X.$ For any canonical
generator $\xi_{i},$ $x\xi_{i}x^{-1}$ is an elliptic element of $\Gamma_{C}$
and hence belongs to $y\left\langle \xi_{j}\right\rangle y^{-1}$ for some
canonical generator $\xi_{j}$ and $y\in\Gamma_{C},$ by 3 of Theorem
\ref{th-sig}. Since both $x\xi_{i}x^{-1}$ and $y\xi_{j}y^{-1}$ generate the
stabilizer of the same point then $x\xi_{i}x^{-1}=y\xi_{j}^{a}y^{-1}$ where
$a$ is relatively prime to the order of $\xi_{j}.$ By using covering space
methods to construct the generating set $\left\{  \xi_{1},\ldots,\xi
_{r}\right\}  ,$ it can be shown that we may in fact take $a=1$ and that the
permutation representation $q:i\rightarrow j$ is defined by the action of $K$
on the branch points of $\mathbb{H}\rightarrow\mathbb{H}/\Gamma_{C}.$ We then
have
\begin{align*}
\omega_{x}(z_{i}) &  =\phi_{x}(\xi_{i})\\
&  =\phi(x\xi_{i}x^{-1})\\
&  =\phi(y\xi_{j}y^{-1})\\
&  =\phi(\xi_{j})\\
&  =z_{j}%
\end{align*}
as $y\in\Gamma_{C}.$ We piece together the data above to construct an action
of $K$ on $X$ by
\begin{equation}
g\cdot(z_{1},\dots,z_{r})=(\omega_{g^{-1}}(z_{q_{g}(1)}),\dots,\omega_{g^{-1}%
}(z_{q_{g}(r)}))\label{eq-Kaction}%
\end{equation}
The vector $(z_{1},\dots,z_{r})$ is fixed by $g$ if and only if
\begin{equation}
\omega_{g}(z_{i})=z_{q(i)},\ 1\leq i\leq r.\label{eq-Kactioni}%
\end{equation}

The following theorem allows us to identify normalizers of cyclic $n$-gonal
actions, by finding the $K$-fixed points of the actions in equation
\ref{eq-Kaction} as we vary over all homomorphisms $K\rightarrow
\mathrm{Aut}(C)$. The proof of the theorem follows from the previous discussion.

\begin{theorem}
Let the sequence $\Gamma_{C}\hookrightarrow\Gamma_{N}\twoheadrightarrow K, $
the cyclic group $C,$ the set of generating vectors $X,$ and the permutation
representation $q:K\rightarrow\Sigma_{r}$ be as defined above. Then we have
the following.

\begin{itemize}
\item Let $(z_{1},\dots,z_{r})\in X$ be a generating $\mathcal{S}(\Gamma_{C}%
)$-vector of $C$ and $\Pi\rightarrow\Gamma_{C}\overset{\phi}{\rightarrow}C$
the epimorphism sequence defined by $\phi(\xi_{i})=z_{i}.$ Assume that $\Pi$
is normal in $\Gamma_{N},$ that $\omega:$ $K\rightarrow\mathrm{Aut}(C)$ is the
resulting homomorphism defined by equation \ref{eq-omega}, and let $K$ act on
$X$ by equation \ref{eq-Kaction}. Then $(z_{1},\dots,z_{r})$ is fixed by all
$g$ in $K.\ $

\item Let $\omega:K\rightarrow\mathrm{Aut}(C)$ be any homomorphism, and let
$K$ act on $X$ by equation \ref{eq-Kaction}$.$ Assume that $(z_{1},\dots
,z_{r})\in X$ is fixed by all $g$ in $K$ and let $\Pi\rightarrow\Gamma
_{C}\overset{\phi}{\rightarrow}C$ be the epimorphism sequence defined by
$\phi(\xi_{i})=z_{i}.$ Then $\Pi$ is normal in $\Gamma_{N}.$
\end{itemize}
\end{theorem}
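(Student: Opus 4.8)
The plan is to leverage the machinery developed in the paragraphs immediately preceding the theorem, so the proof is largely a matter of assembling the pieces in the right order. The theorem has two bullets, a forward implication and its converse, and both hinge on the single identity established in equation \ref{eq-omega}, namely that $\Pi=\ker\phi$ is normal in $\Gamma_N$ if and only if for every $x\in\Gamma_N$ there is an automorphism $\omega_x\in\mathrm{Aut}(C)$ with $\phi(x\gamma x^{-1})=\omega_x(\phi(\gamma))$ for all $\gamma\in\Gamma_C$. The computation preceding the theorem already shows that $x\mapsto\omega_x$ is a homomorphism factoring through $K$, and that $\omega_x(z_i)=z_{q(i)}$, so really the content to be verified is that the combined action in equation \ref{eq-Kaction} is a genuine (left) action and that the condition ``$(z_1,\dots,z_r)$ fixed by all $g\in K$'' is equivalent to equation \ref{eq-Kactioni}, which in turn is equivalent to the $\omega_x$-compatibility condition on all canonical generators.

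For the first bullet I would argue as follows. Assume $\Pi\trianglelefteq\Gamma_N$. Then $\phi_x$ and $\phi$ have the same kernel for every $x\in\Gamma_N$, hence $\omega_x\in\mathrm{Aut}(C)$ exists satisfying equation \ref{eq-omega}; the displayed cocycle computation shows $\omega_{xy}=\omega_x\circ\omega_y$, that this homomorphism kills $\Gamma_C$ (as $C$ is abelian), and therefore descends to $\omega:K\to\mathrm{Aut}(C)$. Next I would invoke the covering-space normalization of the generating set $\{\xi_1,\dots,\xi_r\}$ quoted just before the theorem: $x\xi_i x^{-1}=y\xi_{q(i)}y^{-1}$ for some $y\in\Gamma_C$, where $q$ is the permutation action of $K$ on the branch points of $\mathbb H\to\mathbb H/\Gamma_C$. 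Applying $\phi$ and using $y\in\Gamma_C=\ker(\,\cdot\,\text{restricted to }\Gamma_C\text{ of }\omega\text{-twisting})$... more precisely using the chain $\omega_x(z_i)=\phi_x(\xi_i)=\phi(x\xi_i x^{-1})=\phi(y\xi_{q(i)}y^{-1})=\phi(\xi_{q(i)})=z_{q(i)}$ displayed above the theorem, we get exactly equation \ref{eq-Kactioni} for the preimage in $K$ of $x$. Reading equation \ref{eq-Kaction} off this shows $g\cdot(z_1,\dots,z_r)=(z_1,\dots,z_r)$ for every $g$, which is the claim.

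For the second bullet (the converse) I would reverse the logic. Given an arbitrary homomorphism $\omega:K\to\mathrm{Aut}(C)$ and a vector $(z_1,\dots,z_r)\in X$ fixed by every $g\in K$ under equation \ref{eq-Kaction}, I define $\phi\colon\Gamma_C\to C$ by $\phi(\xi_i)=z_i$; this is a well-defined epimorphism precisely because $(z_1,\dots,z_r)\in X$ (it satisfies the order conditions and the product-one relation, and generates $C$ by the choice of $|C|$), and $\Pi:=\ker\phi$ is torsion free by the Harvey conditions recorded in subsection \ref{subsec-findseq}. To see $\Pi\trianglelefteq\Gamma_N$, pick $x\in\Gamma_N$ with image $g\in K$ and define $\omega_x:=\omega_g$; I must check equation \ref{eq-omega} holds, i.e.\ $\phi(x\gamma x^{-1})=\omega_g(\phi(\gamma))$ for all $\gamma\in\Gamma_C$. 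Since both sides are homomorphisms in $\gamma$ and $\Gamma_C$ is generated by the $\xi_i$, it suffices to check this on each $\xi_i$; but the fixed-point hypothesis is exactly equation \ref{eq-Kactioni}, $\omega_g(z_i)=z_{q(i)}$, and combined with $x\xi_i x^{-1}=y\xi_{q(i)}y^{-1}$, $y\in\Gamma_C$, we get $\phi(x\xi_i x^{-1})=\phi(\xi_{q(i)})=z_{q(i)}=\omega_g(z_i)=\omega_g(\phi(\xi_i))$, as needed. Hence $\ker(\phi_x)=\ker(\omega_x\circ\phi)=\ker\phi=\Pi$ for all $x$, so $x^{-1}\Pi x=\Pi$ for all $x\in\Gamma_N$, i.e.\ $\Pi\trianglelefteq\Gamma_N$.

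The step I expect to be the main obstacle — or at least the one requiring the most care — is the rigorous justification that one may take $a=1$ in the normalization $x\xi_i x^{-1}=y\xi_{q(i)}^{a}y^{-1}$ and that $q$ is genuinely the branch-point permutation; the excerpt asserts this ``by using covering space methods,'' and a fully honest proof would either reproduce that construction or cite it precisely. A secondary subtlety is the bookkeeping on left versus right actions and the appearance of $g^{-1}$ in equation \ref{eq-Kaction}: one must confirm that the map $g\mapsto(\text{permute by }q_g\text{, then apply }\omega_{g^{-1}})$ really satisfies $(gh)\cdot v=g\cdot(h\cdot v)$, which uses both that $q$ is an anti-homomorphism-corrected action on indices and that $\omega$ is a homomorphism; once that is pinned down, the equivalence ``fixed by all $g$'' $\Longleftrightarrow$ equation \ref{eq-Kactioni} is immediate. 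Everything else is the routine transfer between epimorphisms $\phi\colon\Gamma_C\to C$ and vectors in $X$ already set up in subsection \ref{subsec-findseq}, together with the abelian-ness of $C$ used repeatedly to make the twisting automorphisms well-defined and to make the cocycle split as a genuine homomorphism.
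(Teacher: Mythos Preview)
Your proposal is correct and is essentially the paper's own argument: the paper simply states that ``the proof of the theorem follows from the previous discussion,'' and you have accurately reassembled that discussion into a two-direction argument via equation \ref{eq-omega}, the displayed computation $\omega_x(z_i)=z_{q(i)}$, and the bijection between epimorphisms $\phi$ and vectors in $X$. Your flagged obstacle---the covering-space justification that one may take $a=1$ in $x\xi_i x^{-1}=y\xi_{q(i)}^{a}y^{-1}$---is precisely the point the paper also leaves unproved, deferring it to ``covering space methods to construct the generating set.''
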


\begin{example}
Let $\Gamma_{N}$ have signature $(4,4,9,11)$ written in factored form as
$(2\cdot2,2\cdot2,3\cdot3,11)$ where $K=D_{3}$ has signature $(2,2,3).$ Then
$\Gamma_{C}$ has signature $(2^{3},2^{3},3^{2},11^{6}).$ We determine all
possible sequences
\[%
\begin{array}
[c]{ccccc}%
\Gamma_{C} & \hookrightarrow & \Gamma_{N} & \overset{\mathcal{\chi}%
}{\twoheadrightarrow} & K
\end{array}
\]
with $C$ cyclic. As noted in Table 3 $K_{ab}=\mathbb{Z}_{2}$ and we really
only need to carefully consider the action of the reflections in $K.$

Let us first discuss the action of $K$ on the indices $\{1,\dots,14\}.$ This
action is derived from the $K$-action on the sphere, so we just need to
describe it one orbit at a time. The indices $\{1,2,3\}$ correspond to one of
the orbits of size three and the $K$-action is just the standard $D_{3}%
$-action. Likewise for the indices $\{4,5,6\}.$ The indices $\{7,8\}$
correspond to the orbit of size two and so the reflections in $K$ interchange
$7$ and $8.$ Finally $\{9,10,11,12,13,14\}$ constitutes a regular orbit and so
we may arrange the indices so that the reflections in $K $ interchange
$\{9,10,11\}$ and $\{12,13,14\}$ as sets.

Now let $C=C_{66}=C_{2}\times C_{3}\times C_{11}$ and from Remark
\ref{rk-autCord2} $\mathrm{Aut}(C)\backsimeq\mathbb{Z}_{2}\times
\mathbb{Z}_{10}.$ Define $g_{2},g_{3},g_{11},$ so that $C_{2}=\left\langle
g_{2}\right\rangle ,$ $C_{3}=\left\langle g_{3}\right\rangle ,$ $C_{11}%
=\left\langle g_{11}\right\rangle .$ Since the abelianization $K_{ab}%
\backsimeq\mathbb{Z}_{2},$ the image $\omega_{r}$ of the non-trivial element
of $K_{ab}$ must be in the subgroup of $\mathrm{Aut}(C)$ generated by
$\omega_{1}:(x,y,z)\rightarrow(x,y^{-1},z)$ and $\omega_{2}:(x,y,z)\rightarrow
(x,y,z^{-1}),$ for $(x,y,z)\in C_{2}\times C_{3}\times C_{11}$.

Now let us consider a specific map $\omega_{r}\in\mathrm{Aut}(C)$ and a
specific vector. Set $\omega_{r}=\omega_{1}\omega_{2}$ and consider the
following vector
\begin{equation}
(z_{1},\dots,z_{14})=(g_{2},g_{2},g_{2},g_{2},g_{2},g_{2},g_{3},g_{3}%
^{-1},g_{11},g_{11},g_{11},g_{11}^{-1},g_{11}^{-1},g_{11}^{-1}%
).\label{eq-gv66}%
\end{equation}
By construction $(z_{1},\dots,z_{14})$ satisfies equations \ref{eq-gv1},
\ref{eq-gv2}, \ref{eq-gv3}, and it is also fixed by $K$ under the action given
by equation \ref{eq-Kaction} or equation \ref{eq-Kactioni}. Thus the action of
\ $C$ may be extended by $K.$ $\ $The given vector is essentially unique.
First we can only have $\omega_{r}=\omega_{1}\omega_{2}.$ If $\omega_{r}$ acts
trivially on $C_{3}\subset C$ then we must have $z_{8}=\omega_{r}(z_{7}%
)=z_{7}$ by equation \ref{eq-Kactioni}$\ $but then
\[
1=z_{1}\cdot\cdots\cdot z_{14}=z_{1}\cdot\cdots\cdot z_{6}z_{7}^{2}z_{9}%
\cdot\cdots\cdot z_{14}%
\]
Since $z_{7}^{2}$ has order 3, it is not possible for this product to be
trivial no matter what the values of other $z_{i}$ are. Likewise $\omega_{r} $
cannot act trivially on $C_{11}$ otherwise $z_{9}=\cdots=z_{14},$ by equation
\ref{eq-Kactioni}, yielding another contradiction. Now that $\omega_{r}$ is
determined it follows that we can only have a vector of the form in equation
\ref{eq-gv66}, where $g_{2},g_{3},g_{11}$ are suitably chosen generators. Any
two such vectors are equivalent under $\mathrm{Aut}(C).$
\end{example}

\begin{remark}
\label{rk-Kfixed}The above example demonstrates the following easily proved
properties of $K$-fixed-vectors. Using the properties allows us to easily
construct and enumerate the $K$-fixed vectors.

\begin{enumerate}
\item The element $z_{i}$ must be invariant under $\left\{  \omega_{g}%
:q_{g}(i)=i\right\}  $.

\item The collection of $z_{i}$ corresponding to a $K$-orbit $\mathcal{O}%
\subseteq\{1,\ldots,r\}$ are determined by a single $z_{i_{0}}$ for any
$i_{0}\in\mathcal{O}.$ Just use the $K$-action.

\item If $\mathcal{O}_{1},\ldots,\mathcal{O}_{s}$ are the $K$-orbits
constituting $\{1,\ldots,r\}$ Then
\[%
{\displaystyle\prod\limits_{j=1}^{s}}
\left(
{\displaystyle\prod\limits_{i\in\mathcal{O}_{j}}}
z_{i}\right)  =1.
\]

\end{enumerate}

Now one simply finds an element in each orbit of the correct order satisfying
statement 1$.$ By statement 2 the orbit products $%
{\textstyle\prod\limits_{i\in\mathcal{O}_{j}}}
z_{i}$ are easily calculated and we just have to verify statement 3$.$
\end{remark}

\subsection{Finding epimorphisms $\Gamma_{N}\twoheadrightarrow K$}

By proposition \ref{pr-images2} there are epimorphisms $\Gamma_{N}%
\twoheadrightarrow K$ if and only if $\Gamma_{N}$ has a\ $K$-compatible
signature. Finding maps $\Gamma_{N}\rightarrow K$ is fairly simple when all
the periods are known constants. However, as we shall see in the next section,
we want to consider that case when the periods are parameters, such as in
Example \ref{ex-parametric}. There needs to be some care to get an efficient
enumeration of all the cases. We first consider an example.

\begin{example}
\label{ex-triangle}Suppose that $\Gamma_{N}=T(2,d,2d)$ with signature
$(2,d,2d),d\geq4$ We want to permute and factor the signature $(2,d,2d)$ so
that it is $K$-compatible, i.e., in the form $(a_{1}b_{1},a_{2}b_{2}%
,a_{3}b_{3})$ or $(kb_{1},kb_{2},m_{1})$ We put the results in Table 4 below.
In the factorizations the variable $e$ may be any integer such that the
signature $(2,d,2d)$ is hyperbolic. Some factorizations are equivalent by
permutations that leave the signature of $K$ fixed, they are listed
contiguously. The signature of $C$ and the $n=\left\vert C\right\vert $ can be
computed as at the beginning of subsection \ref{subsec-findseq}.
\[%
\begin{tabular}
[c]{c}%
\textbf{Table 4 - part 1}\\
$%
\begin{tabular}
[c]{||l|l|l|l||}\hline\hline
$K$ & $\mathcal{S}(K)$ & $\mathcal{S}(\Gamma_{N})$ factored & conditions on
$d,e,k,n$\\\hline
$C_{2}$ & $(2,2)$ & $(2,d,2d)=(2\cdot1,2\cdot e,4e)$ & $d=2e,n=4e$\\\hline
$\ $ & $(2,2)$ & $(d,2,2d)=(2\cdot e,2\cdot1,4e)$ & $d=2e,n=4e$\\\hline
& $(2,2)$ & $(2,2d,d)=(2\cdot1,2\cdot e,e)$ & $n=e=d,$ $e$ \textrm{odd,
}$e\geq5$\\\hline
& $(2,2)$ & $(2d,2,d)=(2\cdot e,2\cdot1,e)$ & $n=e=d,$ $e$ \textrm{odd,
}$e\geq5$\\\hline
$C_{k}$ & $(k,k)$ & $(d,2d,2)=(k\cdot e,k\cdot2e,2)$ & $d=ke,n=2e,$ $d$
\textrm{odd}\\\hline
& $(k,k)$ & $(2d,d,2)=(k\cdot2e,k\cdot e,2)$ & $d=ke,n=2e,$ $d$ \textrm{odd}%
\\\hline
$D_{2}$ & $(2,2,2)$ & $(d,2d,2)=(2\cdot e,2\cdot2e,2\cdot1)$ & $d=2e,n=2e$%
\\\hline
& $(2,2,2)$ & $(2d,d,2)=(2\cdot2e,2\cdot e,2\cdot1)$ & $d=2e,n=2e$\\\hline
$D_{k}$ & $(2,2,k)$ & $(2,2d,d)=(2\cdot1,2\cdot ke,k\cdot e)$ & $d=n=ke$%
\\\hline
&  &  & $d$ \textrm{odd or }$k$\textrm{\ even}\\\hline
& $(2,2,k)$ & $(2d,2,d)=(2\cdot ke,2\cdot1,k\cdot e)$ & $d=n=ke$\\\hline
&  &  & $d$ \textrm{odd or }$k$\textrm{\ even}\\\hline
& $(2,2,k)$ & $(2,d,2d)=(2\cdot1,2\cdot\frac{d}{2},k\cdot e)$ & $2d=ke,$ $d $
\textrm{even,}\\\hline
&  &  & $n=\operatorname{lcm}\left(  e,\frac{ek}{4}\right)  $\\\hline
& $(2,2,k)$ & $(d,2,2d)=(2\cdot\frac{d}{2},2\cdot1,k\cdot e)$ & $2d=ke,$ $d $
\textrm{even,}\\\hline
&  &  & $n=\operatorname{lcm}\left(  e,\frac{ek}{4}\right)  $\\\hline\hline
\end{tabular}
$%
\end{tabular}
\]%
\[%
\begin{tabular}
[c]{c}%
\textbf{Table 4 - part 2}\\
$%
\begin{tabular}
[c]{||l|l|l||l||}\hline\hline
$K$ & $\mathcal{S}(K)$ & $\mathcal{S}(\Gamma_{N})$ factored & conditions on
$d,e,n$\\\hline
$A_{4}$ & $(2,3,3)$ & $(2,d,2d)=(2\cdot1,3\cdot e,3\cdot2e)$ & $d=3e,n=2e$%
\\\hline
&  & $(2,2d,d)=(2\cdot1,3\cdot2e,3\cdot e)$ & $d=3e,n=2e$\\\hline
$S_{4}$ & $(2,3,4)$ & $(2,d,2d)=(2\cdot1,3\cdot2e,4\cdot3e)$ & $d=n=6e$%
\\\hline
&  & $(2,2d,d)=(2\cdot1,3\cdot8e,4\cdot3e)$ & $d=12e,n=24e$\\\hline
$A_{5}$ & $(2,3,5)$ & $(2,d,2d)=(2\cdot1,3\cdot5e,5\cdot6e)$ & $d=15e,n=30e$%
\\\hline
&  & $(2,2d,d)=(2\cdot1,3\cdot10e,5\cdot3e)$ & $d=15e,n=30e$\\\hline\hline
\end{tabular}
$%
\end{tabular}
\]
\newline\newline We prove a few of the lines. \newline\newline Line 1: Since
$S(\Gamma_{N})=$ $(2,d,2d),$ $K=C_{2},$ the signature of $C$ is $(1,\frac
{d}{2},\left(  2d\right)  ^{k}),$ setting $d=2e$, $k=2$ we get $\mathcal{S}%
(C)=(e,4e,4e).$ According to \cite{Har}, $n=4e$ and a \ $C$-action exists on a
surface of genus $2e-2$ provided $e\geq2$.\newline\newline Line 3: The
signature of $C$ is $(1,e,e^{k})$ or $(e,e,e).$ A cyclic action with $n=e$ on
a surface of genus $\sigma=(e-1)/2$ exists if $e\geq3$ and $e $ is
odd.\newline\newline Line 5: The signature of $C$ is $(e,2e,2^{k}).$ If either
$e$ or $k$ is even, then the number of periods divisible by the highest power
of 2 is odd, violating one the conditions in \cite{Har}. Thus $d$ is odd and
$e\geq3.$ Harvey's conditions now hold and $n=2e$, $\sigma=\frac{d-1}{2}%
.$\newline\newline Line 7: The signature of $C$ is $(e^{2},d^{2},1^{2})$ or
$(e,e,2e,2e).$ A cyclic action with $n=2e$ exists on a surface of genus $2e-2$
if $e\geq2$.\newline\newline Line 9: The signature of $C$ is $(1^{k}%
,d^{k},\left(  \frac{d}{k}\right)  ^{2}) $ or $(e,e,\left(  ek\right)  ^{k}).$
We must have $n=d$ and $\sigma=\frac{k(d-3)+2}{2}.$ If $d$ is odd or if $k$ is
even then the signature meets the parity conditions in \cite{Har}, and an
action exists.\newline\newline Lines 13: The signature of $C$ is
$(1^{k},\left(  \frac{d}{2}\right)  ^{k},\left(  \frac{2d}{k}\right)  ^{2})$
or $(e,e,\left(  \frac{ek}{4}\right)  ^{k}),$ upon setting $2d=ek.$ We must
have $n=\operatorname{lcm}\left(  e,\frac{ek}{4}\right)  \ $ and
$n=ek,\frac{ek}{2},$ or $\frac{ek}{4}$ are all possible. The genus is and
$\sigma=1+\frac{nk}{2}-\frac{4n}{e}.$
\end{example}

Now we describe an algorithm for generating all possible maps $\Gamma
_{N}\rightarrow K,$ or equivalently the compatible, permuted signatures. See
Example \ref{ex-GammaNtoK-algorithm} for various steps of the process. First
we build a list of all possibilities and then prune the list to remove redundancies.

\begin{enumerate}
\item Enumerate all distinct ordered pairs $(l_{1},l_{2})$ or triples of
periods $(l_{1},l_{2},l_{3})$ from the periods of $\Gamma_{N},$ depending on
whether $K$ has 2 or 3 canonical generators.

\item Rewrite the periods in the form $(l_{1},l_{2},m_{1},\dots,m_{u})$ or
$(l_{1},l_{2},l_{3},m_{1},\dots,m_{u})$ so that the ordered pair or triple
occurs first and the remaining periods are ordered lexicographically with
respect to parameter variables, using increasing order on the coefficients.

\item For permuted $\mathcal{S}(\Gamma_{N})$ found in step 2 we solve
$(l_{1},l_{2})=(kb_{1},kb_{2}),\ $ $(l_{1},l_{2},l_{3})$ $=$ $(a_{1}%
b_{1},a_{2}b_{2},a_{3}b_{3}).$ We split this into two cases depending on
whether the signature of $K$ has parameters or not. Initially the parametric
signatures are $(k,k)$ or $(2,2,k)$ but these may be changed later on.

\item If the signature of $K$ consist only of constants we proceed as follows.

\begin{itemize}
\item We examine each $a_{i}$ in order, modifying $\mathcal{S}(\Gamma_{N})$ as needed.

\item If $l_{i}$ is a constant not divisible by $a_{i}$ we reject the permuted
$\mathcal{S}(\Gamma_{N})$.

\item Otherwise write $l_{i}=c_{i}w_{i}$ where $c_{i}$ is a constant and
$w_{i}$ is a parameter. Set $e_{i}=a_{i}/\gcd(a_{i},c_{i})$ and make the
substitution $w_{i}\rightarrow e_{i}w_{i}$ throughout the signature. See
Example \ref{ex-GammaNtoK-algorithm}, item 1.
\end{itemize}

\item If the signature of $K$ has a parameter $(k,k)$ or $(2,2,k)$ we proceed
as follows.

\begin{itemize}
\item We examine each $a_{i}$ in order, modifying $\mathcal{S}(\Gamma_{N})$ as needed.

\item If $a_{i}$ is a constant then we proceed as in step 4.

\item If $a_{i}$ is a parameter and $l_{i}$ is a constant then for each
divisor $d$ of $l_{i},d>1,$ solve the problem with $\mathcal{S}(K)=(d,d)$ or
$(2,2,d)$ and $\mathcal{S}(\Gamma_{N}).$ See Example
\ref{ex-GammaNtoK-algorithm}, item 2.

\item If $a_{i}$ has a parameter and $l_{i}$ has a parameter then we modify
with a separate case for the dihedral and cyclic cases.

\item \textit{Cyclic case} $l_{1}=c_{1}w_{1},$ $l_{2}=c_{2}w_{2}$: Let $d$ be
any divisor of $\gcd(c_{1},c_{2})$ then set $\mathcal{S}(K)=(dk,dk)$ and make
the substitution $w_{i}\rightarrow kw_{i}$ for each distinct $w_{i}.$

\item \textit{Dihedral Case} $l_{3}=c_{3}w_{3}$: First modify $\mathcal{S}%
(\Gamma_{N})$ as in the first bullet, possibly getting a new equation
$l_{3}=c_{3}w_{3}$. Let $d$ be any divisor of $c_{3}$ then set $\mathcal{S}%
(K)=(2,2,dk)$ and make the substitution $w_{3}\rightarrow kw_{3}.$
\end{itemize}
\end{enumerate}

\begin{example}
\label{ex-GammaNtoK-algorithm} Here are some examples of steps in the
algorithm above. We denote the desired map $\chi:\Gamma_{N}\rightarrow K$ by
$\mathcal{S}(\Gamma_{N})/\mathcal{S}(K).$ Steps in the process corresponding
to period $a_{i}$ of $K$ are denoted by the numbered arrow $\overset
{i}{\longrightarrow}.$

\begin{enumerate}
\item First let $\mathcal{S}(\Gamma_{N})=(2,2,x_{1},5x_{1}),$ $\mathcal{S}%
(K)=(2,3,5).$ The 12 permutations of $\mathcal{S}(\Gamma_{N})$ to be
considered are%
\begin{align*}
&  (2,2,x_{1},5x_{1}),(2,x_{1},2,5x_{1}),(x_{1},2,2,5x_{1}),(2,2,5x_{1}%
,x_{1}),\\
&  (2,5x_{1},2,x_{1}),(5x_{1},2,2,x_{1}),(2,x_{1},5x_{1},2),(2,5x_{1}%
,x_{1},2),\\
&  (x_{1},2,5x_{1},2),(5x_{1},2,x_{1},2),(x_{1},5x_{1},2,2),(5x_{1}%
,x_{1},2,2).
\end{align*}
If we consider the case $(2,x_{1},5x_{1},2),$ then the sequence of
substitutions required is:%
\begin{align*}
&  (2,x_{1},5x_{1},2)/(2,3,5)\overset{1}{\longrightarrow}(2,x_{1}%
,5x_{1},2)/(2,3,5)\overset{2}{\longrightarrow}\\
&  (2,3x_{1},15x_{1},2)/(2,3,5)\overset{3}{\longrightarrow}(2,3x_{1}%
,15x_{1},2)/(2,3,5)
\end{align*}

\item Let $\mathcal{S}(\Gamma_{N})=(6,x_{1},5x_{1},6),$ $\mathcal{S}(K)=(k,k)
$ From $(k,k)=(2,2)$ we get the sequence of substitutions required
\[
(6,x_{1},5x_{1},6)/(2,2)\overset{1}{\longrightarrow}(6,x_{1},5x_{1}%
,6)/(2,2)\overset{2}{\longrightarrow}(6,2x_{1},10x_{1},6)/(2,2)
\]
and from $(k,k)=(3,3)$ we get
\[
(6,x_{1},5x_{1},6)/(3,3)\overset{1}{\longrightarrow}(6,x_{1},5x_{1}%
,6)/(3,3)\overset{2}{\longrightarrow}(6,3x_{1},15x_{1},6)/(3,3)
\]

\item Let $\mathcal{S}(\Gamma_{N})=(6x_{1},10x_{1},2,2),$ $\mathcal{S}%
(K)=(k,k).$ Then we get
\[
(6x_{1},10x_{1},2,2)/(k,k)\rightarrow(6kx_{1},10kx_{1},2,2)/(k,k))
\]
and
\[
(6x_{1},10x_{1},2,2)/(k,k)\rightarrow(6kx_{1},10kx_{1},2,2)/(2k,2k))
\]

\item Let $\mathcal{S}(\Gamma_{N})=(2,x_{1},5x_{1},2),$ $\mathcal{S}%
(K)=(2,2,k).$ Then we get
\begin{align*}
&  (2,x_{1},5x_{1},2)/(2,2,k)\overset{1}{\longrightarrow}(2,x_{1}%
,5x_{1},2)/(2,2,k)\overset{2}{\longrightarrow}\\
&  (2,2x_{1},10x_{1},2)/(2,2,k)\overset{3}{\longrightarrow}(2,2kx_{1}%
,10kx_{1},2)/(2,2,k)
\end{align*}
or
\begin{align*}
&  (2,x_{1},5x_{1},2)/(2,2,k)\overset{1}{\longrightarrow}(2,x_{1}%
,5x_{1},2)/(2,2,k)\overset{2}{\longrightarrow}\\
&  (2,2x_{1},10x_{1},2)/(2,2,k)\overset{3}{\longrightarrow}(2,2kx_{1}%
,10kx_{1},2)/(2,2,2k)
\end{align*}
or%
\begin{align*}
&  (2,x_{1},5x_{1},2)/(2,2,k)\overset{1}{\longrightarrow}(2,x_{1}%
,5x_{1},2)/(2,2,k)\overset{2}{\longrightarrow}\\
&  (2,2x_{1},10x_{1},2)/(2,2,k)\overset{3}{\longrightarrow}(2,2kx_{1}%
,10kx_{1},2)/(2,2,5k)
\end{align*}
or%
\begin{align*}
&  (2,x_{1},5x_{1},2)/(2,2,k)\overset{1}{\longrightarrow}(2,x_{1}%
,5x_{1},2)/(2,2,k)\overset{2}{\longrightarrow}\\
&  (2,2x_{1},10x_{1},2)/(2,2,k)\overset{3}{\longrightarrow}(2,2kx_{1}%
,10kx_{1},2)/(2,2,10k)
\end{align*}

\end{enumerate}
\end{example}

\section{The Fuchsian group pair $\Gamma_{N}<\Gamma_{A}$ \label{sec-bot}}

Since it is unlikely that $\Gamma_{N}$ is normal in $\Gamma_{A}$ we need to
find ways to work with the structure of the inclusion of the pair $\Gamma
_{N}<\Gamma_{A}.$ We shall describe two different approaches: monodromy and
word maps. Since these concepts require significant computational power to
fully implement we only discuss them very briefly and refer the reader to
\cite{Brou3} for full details. In our examples in Section \ref{sec-samp} we
shall use ad hoc methods to directly construct a candidate for the full
automorphism group. Then we will use ad hoc applications of the monodromy
group and a maximality result, discussed at the end of this section, to
demonstrate that the candidate is the full automorphism group. On the other
hand, the machinery of monodromy groups and word maps is necessary for full
classification and computing the harder examples. Thus, we include an overview
of those ideas to give a complete overview of the classification process.

In Singerman's paper \cite{Sing2} on finite maximality, the inclusions
$\Gamma_{N}<\Gamma_{A}$ where both $\Gamma_{N}$ and $\Gamma_{A}$ are triangle
groups were determined. These pairs constitute the main part of what is known
as \textquotedblleft Singerman's list". Later, the authors of \cite{Bujetal}
presented methods useful in finding $A,$ if it exists, given $N$ and
$\Gamma_{N}<\Gamma_{A}.$ However, their methods were restricted to pairs on
Singerman's list. As described in the signature theorem, Theorem
\ref{thm-sigsweak}, there may be pairs $\Gamma_{N}<\Gamma_{A}$ which do not
appear in Singerman's list. Hence, we need the more general discussion of
pairs $\Gamma_{N}<\Gamma_{A}$ given in this section.

\subsection{Monodromy and word maps\label{subsec-monowordmap}}

Let $\Gamma<\Delta$ be a finite index pair of genus zero Fuchsian groups and
let $m=[\Gamma:\Delta].$ Any labeling of the cosets of $\Gamma$ in $\Delta$
gives rise to a permutation representation $\rho:\Delta\rightarrow\Sigma_{m}.$
If another labeling is chosen then the two representations are related by
$\rho_{2}=\pi\rho_{1}\pi^{-1}$ for some $\pi\in\Sigma_{m}.$ Thus all the
images $\rho(\Delta)$ are conjugate and are isomorphic to $\Delta
/$\textrm{Core}$_{\Delta}(\Gamma).$ We call any of the images or $\Delta
/$\textrm{Core}$_{\Delta}(\Gamma)$ itself the monodromy group $M(\Delta
,\Gamma).$ The monodromy group $M(\Delta,\Gamma) $ is isomorphic to the
monodromy of the branched cover $\mathbb{H}/\Delta\rightarrow\mathbb{H}%
/\Gamma$ away from the branch points. Since the groups are genus zero
$\mathbb{H}/\Delta\rightarrow\mathbb{H}/\Gamma$ is just a branched covering of
the sphere by itself.

If $\ \mathcal{G}_{2}=\left\{  \zeta_{1},\dots,\zeta_{t}\right\}  $ is the
chosen set of canonical generators of $\Delta,$ then the permutations%
\[
\pi_{j}=\rho\left(  \zeta_{j}\right)
\]
satisfy
\[
\prod\limits_{j=1}^{t}\pi_{j}=1
\]
because of equation \ref{eq-rels}. The monodromy group $M(\Delta
,\Gamma)=\left\langle \pi_{1},\pi_{2},\ldots,\pi_{t}\right\rangle $ is a
transitive subgroup of $\Sigma_{m}.$

\begin{remark}
If $\Gamma=\Gamma_{N}$ and $\Delta=\Gamma_{A}$ then $\Delta/$\textrm{Core}%
$_{\Delta}(\Gamma)\backsimeq A/$\textrm{Core}$_{A}(N)$ and $\mathbb{H}%
/\Delta\rightarrow\mathbb{H}/\Gamma$ is the projection $S/N\rightarrow S/A.$
\end{remark}

\begin{definition}
Let notation be as above and set $\mathcal{P=(}\pi_{1},\ldots,\pi_{t}).$ The
cycle type of $\pi_{j}$ determines a partition $p_{j}$ of $m,$ set
$P=(p_{1},\ldots,p_{t})$. The tuple of permutations $\mathcal{P}$ is called
the \emph{monodromy vector} of $\Gamma<\Delta$ or $\mathbb{H}/\Delta
\rightarrow\mathbb{H}/\Gamma.$ The tuple of partitions $P$ is called the
\emph{cycle vector} of $\Gamma<\Delta$ or $\mathbb{H}/\Delta\rightarrow
\mathbb{H}/\Gamma.$ More generally, let $P=(p_{1},\ldots,p_{t})$ be a
$t$-tuple of partitions and let $\mathcal{P=(}\pi_{1},\ldots,\pi_{r})$ be
$t$-tuple of permutations. Then $\mathcal{P}$ is called a transitive
$P$-monodromy vector if
\begin{equation}
\pi_{j}\text{ \textrm{has cycle type} }p_{j}\label{eq-monovec1}%
\end{equation}%
\begin{equation}
\prod\limits_{j=1}^{t}\pi_{j}=1\label{eq-monovec2}%
\end{equation}%
\begin{equation}
\left\langle \pi_{1},\pi_{2},\ldots,\pi_{t}\right\rangle \text{ \textrm{is a
transitive subgroup of} }\Sigma_{m}.\label{eq-monovec3}%
\end{equation}

\end{definition}

\begin{remark}
The signatures $\mathcal{S}(\Gamma)$, $\mathcal{S}(\Delta)$ determine the
cycle types occurring in the cycle vector for $\Gamma<\Delta.$ Indeed, let
$p_{j}=(p_{j,1},\ldots,p_{j,s_{j}})$ be the partition of $n$ determined by
$\pi_{j}.$ Then for each $p_{j,i}$ there is a distinct generator $\theta
_{j,i}$ of $\Gamma$ of order $m_{j,i}$ such that
\begin{equation}
o(\zeta_{j})=p_{j,i}o(\theta_{j,i})\label{eq-compatible1}%
\end{equation}
or%
\begin{equation}
n_{j}=p_{j,i}m_{j,i}\label{eq-compatible2}%
\end{equation}
where $\mathcal{S}(\Delta)=(n_{1},\ldots,n_{t}).$ We say that the pair of
signatures $\mathcal{S}(\Gamma)<\mathcal{S}(\Delta)$ of signatures are
$P$-compatible, and symbolize this by
\[
P:\mathcal{S}(\Gamma)\rightarrow\mathcal{S}(\Delta)
\]
We call the sequence a numerical projection even though there may not be a
projection of surfaces $\pi:\mathbb{H}/\Gamma\rightarrow\mathbb{H}/\Delta.$
\end{remark}

The following variant of the Riemann existence theorem is important for our work.

\begin{theorem}
Let $\Gamma,\Delta$ be a two Fuchsian groups, and $P$ a cycle vector, and
suppose that the signatures $\mathcal{S}(\Gamma)$ and $\mathcal{S}(\Delta)$
are $P$-compatible. Let $\mathcal{P}$ be a transitive $P$-monodromy vector.
Then there is a subgroup $\Gamma^{\prime}<$ $\Delta,$ with the same signature
as $\Gamma,$ such that $\mathcal{P}$ is the monodromy vector of the pair
$\Gamma^{\prime}<\Delta.$
\end{theorem}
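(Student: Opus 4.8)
The plan is to deduce this from the classical Riemann existence theorem for branched covers of the sphere, using the genus zero hypothesis to reduce everything to a statement about transitive permutation vectors. First I would recall the setup: $\Delta$ is a genus zero Fuchsian group with canonical generators $\zeta_1,\dots,\zeta_t$ of orders $n_1,\dots,n_t$, so that $\mathbb{H}/\Delta$ is the sphere and $\pi_\Delta\colon\mathbb{H}\to\mathbb{H}/\Delta$ is branched over $t$ marked points $q_1,\dots,q_t$ with the $\zeta_j$ corresponding to loops around the $q_j$. A subgroup $\Gamma'<\Delta$ of index $m$ corresponds to a connected $m$-sheeted cover of the $t$-punctured sphere, which is classified by the conjugacy class (up to simultaneous $\Sigma_m$-conjugation) of a transitive homomorphism $\rho\colon\pi_1(\mathbb{P}^1\setminus\{q_1,\dots,q_t\})\to\Sigma_m$, equivalently by a transitive tuple $(\pi_1,\dots,\pi_t)$ with $\prod\pi_j=1$. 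This is exactly the data of a transitive monodromy vector.

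The key steps, in order, are as follows. Given the transitive $P$-monodromy vector $\mathcal{P}=(\pi_1,\dots,\pi_t)$, define $\rho\colon\Delta\to\Sigma_m$ on canonical generators by $\rho(\zeta_j)=\pi_j$; relations \eqref{eq-rels} for $\Delta$ are respected because $\prod_j\pi_j=1$ and the only other relations are $\zeta_j^{n_j}=1$, which hold in the image since $\pi_j$ has order dividing $n_j$ (its cycle type $p_j$ has parts dividing $n_j$ by $P$-compatibility). Let $\Gamma'$ be the stabilizer of the point $1$ under the resulting transitive action on $\{1,\dots,m\}$, so $[\Delta:\Gamma']=m$ and $\rho$ is the coset-action representation, hence $\mathcal{P}$ is by construction the monodromy vector of $\Gamma'<\Delta$. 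Finally I would verify that $\Gamma'$ has the prescribed signature $\mathcal{S}(\Gamma)$: apply Theorem \ref{th-Singerman1} to the pair $\Gamma'<\Delta$, which says the canonical generators of $\Gamma'$ induced by $\zeta_j$ have orders $n_j/k$ as $k$ runs over the cycle lengths of $\rho(\zeta_j)=\pi_j$; by $P$-compatibility \eqref{eq-compatible2} these are precisely the periods $m_{j,i}$ of $\mathcal{S}(\Gamma)$, and summing over $j$ and using that distinct $\zeta_j$ induce disjoint generators (again Theorem \ref{th-Singerman1}) accounts for all periods. The orbit genus of $\Gamma'$ is then forced to be zero by the Riemann--Hurwitz computation $\mu(\Gamma')=m\,\mu(\Delta)$ together with $\sum(1-1/m_{j,i})$ being already determined, so $\mathcal{S}(\Gamma')=\mathcal{S}(\Gamma)$.

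The main obstacle I anticipate is the signature bookkeeping in the last step rather than the existence of the cover, which is essentially immediate from Riemann existence. One has to be careful that $P$-compatibility was defined so that each part $p_{j,i}$ of the partition $p_j$ matches a cycle of $\pi_j$ and produces a generator of order $m_{j,i}=n_j/p_{j,i}$; the subtlety is making sure no extra fixed points or unexpected short cycles appear, and that the count of induced generators from all $\zeta_j$ exactly exhausts $\mathcal{G}_1$ with no leftover periods and no contribution to the orbit genus. This is exactly what Theorem \ref{th-Singerman1} (Singerman's result) controls, so the proof amounts to checking that the numerical data of $P$ is the translation of the cycle-structure data in that theorem, and then invoking Riemann--Hurwitz to pin down $\sigma_{\Gamma'}=0$.
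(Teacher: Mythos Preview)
The paper does not actually prove this theorem: it is stated without proof, introduced only as ``the following variant of the Riemann existence theorem is important for our work.'' Your proposal supplies exactly the standard argument one would give to justify such a statement, and it is correct. Defining $\rho$ on canonical generators, taking $\Gamma'$ as a point stabilizer, and reading off the signature of $\Gamma'$ from the cycle structure via Theorem~\ref{th-Singerman1} is precisely how this is done; your identification of the Riemann--Hurwitz/index bookkeeping as the only place requiring care is accurate.

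One small remark on that bookkeeping: you assert that the orbit genus of $\Gamma'$ is forced to be zero by $\mu(\Gamma')=m\,\mu(\Delta)$. This is true, but to conclude $\sigma_{\Gamma'}=\sigma_{\Gamma}$ you are implicitly using that $\mu(\Gamma)=m\,\mu(\Delta)$ as well, i.e.\ that $P$-compatibility of the two genus-zero signatures already encodes the correct index relation. In the paper's setup this is automatic, since $P$-compatibility is extracted from an honest inclusion and the map from cycles of $\pi_j$ (with length $<n_j$) to canonical generators of $\Gamma$ is a bijection, not merely an injection; it would be worth saying this explicitly in a written-out proof.
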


Now we turn our attention to word maps.

\begin{definition}
Select canonical generating sets $\mathcal{G}_{1}=\{\theta_{1},\dots\theta
_{s}\}$ and $\mathcal{G}_{2}=\{\zeta_{1},\dots\zeta_{t}\}$ of $\Gamma$ and
$\Delta$ respectively. The word map of the pair $\Gamma\leq\Delta$ is a set of
words $\{w_{1},\ldots,w_{s}\}$ in the generators in $\mathcal{G}_{2}$ such
that
\[
\theta_{i}=w_{i}(\zeta_{1},\ldots,\zeta_{t}),i=1,\ldots,s.
\]

\end{definition}

\begin{remark}
\label{rk-GZwordmap}If both groups have genus zero there is an easily
implemented algorithm to calculate the word map, see \cite{Brou3}. The word
maps for the inclusions in Singerman's list have been calculated in
\cite{Bujetal}.
\end{remark}

\begin{example}
Suppose we have the signatures $\mathcal{S}_{1}=(2,2,2,5),\mathcal{S}%
_{2}=(2,4,5).$ We want to show there is a pair $\Gamma<\Delta$ with
$\mathcal{S}(\Gamma)=\mathcal{S}_{1},\mathcal{S}(\Delta)=\mathcal{S}_{2}.$
First find a compatible monodromy vector $\mathcal{P=(}\pi_{1},\pi_{2},\pi
_{3})$ in $\Sigma_{6}.$ We select
\[
\pi_{1}=(1,3)(4,6),\pi_{2}=(1,2)(3,5,4,6),\pi_{3}=(1,2,3,4,5)
\]
from which we get $M(\Delta,\Gamma)=A_{6}.$ Define as before $\rho\colon
\Delta\rightarrow\Sigma_{6}$ by $\rho\colon\zeta_{i}\rightarrow\pi
_{i},i=1\ldots3.$ Then $\Gamma$ may be taken as the stabilizer of a point for
the permutation action of $\Delta$ on $\{1,\ldots,6\}.$ From the algorithm, a
generating set for $\Gamma$ is
\begin{align*}
\theta_{1} &  =(\zeta_{1}\zeta_{2})\zeta_{1}(\zeta_{1}\zeta_{2})^{-1}\\
\theta_{2} &  =\zeta_{2}\zeta_{1}\zeta_{2}^{-1}\\
\theta_{3} &  =\zeta_{2}^{2}\\
\theta_{4} &  =(\zeta_{2}^{-1}\zeta_{1}^{-1}\zeta_{2}^{-1}\zeta_{1}\zeta
_{3}\zeta_{1})\zeta_{3}(\zeta_{2}^{-1}\zeta_{1}^{-1}\zeta_{2}^{-1}\zeta
_{1}\zeta_{3}\zeta_{1})^{-1}%
\end{align*}

\end{example}

Here is how the word maps may be used in conjunction with the monodromy
vectors to expand an extension $C\vartriangleleft N$ to $C\vartriangleleft
N<A.$

\begin{itemize}
\item Assume that we have pairs $\Gamma_{N}<\Gamma_{A}$ and $\Gamma
_{C}\vartriangleleft\Gamma_{N}$ determined by monodromy groups $M(\Gamma
_{A},\Gamma_{N})$ and $M(\Gamma_{N},\Gamma_{C})\backsimeq K.$

\item According to Remark \ref{rk-GZwordmap} there are word maps for the
inclusions $\Gamma_{C}\vartriangleleft\Gamma_{N}$ and $\Gamma_{N}<\Gamma_{A}.$

\item The word maps may be composed to provide a word map for $\Gamma
_{C}<\Gamma_{A}.$

\item The word map may be used with the Todd-Coxeter algorithm to provide the
monodromy group $M(\Gamma_{A},\Gamma_{C})=M(A,C).$

\item The stabilizer of a point in $M(A,C)$ is $C/$\textrm{Core}$_{A}(C)$. If
\textrm{Core}$_{A}(C)$ is trivial then $C/$\textrm{Core}$_{A}(C)$ can be
tested to see if it is cyclic. The trivial core condition is satisfied in the
weakly malnormal case discussed in the next section.
\end{itemize}

\subsection{Constrained and tight pairs}

We need a mechanism to deal with families of inclusions. First we consider an
example arising from Singerman's list.

\begin{example}
\label{ex-parametric}Let $T(l,m,n)$ denote the triangle Fuchsian group with
signature $(l,m,n)$. Consider the possible case $\Gamma_{N}=T(2,d,2d)$ and
$\Gamma_{A}=T(2,3,2d)$ with $d\geq4.$ The index is
\[
\lbrack\Gamma_{A}:\Gamma_{N}]=\frac{1-\frac{1}{2}-\frac{1}{d}-\frac{1}{2d}%
}{1-\frac{1}{2}-\frac{1}{3}-\frac{1}{2d}}=\frac{(d-3)/2d}{(d-3)/6d}=3.
\]
With little more work a monodromy vector can be found $((1,2),(1,2,3),(1,3)),
$ and $M(\Gamma_{A},\Gamma_{N})=\ \Sigma_{3}.$ Notice that in this case
\[
o(\zeta_{1})=o(\pi_{1}),\text{ }o(\zeta_{2})=o(\pi_{2}),\text{ }o(\zeta
_{3})>o(\pi_{3}).
\]
and that $o(\zeta_{3})$ has a parameter $d\geq4.$
\end{example}

To handle the notion of families we extend our consideration of Fuchsian
groups to include parabolic elements $\delta_{1},\ldots,\delta_{q}.$ Thus we
have $\Delta$ $=$ $\langle\gamma_{1},\dots,\gamma_{t},$ $\delta_{1}%
,\ldots,\delta_{q}\rangle,$ with relations
\begin{equation}
\gamma_{1}^{n_{1}}=\gamma_{2}^{n_{2}}=\dots=\gamma_{t}^{n_{t}}=\prod
\limits_{i=1}^{t}\gamma_{i}\prod\limits_{j=1}^{q}\delta_{j}%
=1.\label{eq-relsparemetric}%
\end{equation}
The Teichm\"{u}ller dimension of the modified $\Delta$ is $d(\Delta)=t+q-3$.

\begin{definition}
Let $\rho:\Delta\rightarrow\Sigma_{n}$ as previously defined.

\begin{itemize}
\item A pair $\Gamma<\Delta$ is called \emph{constrained} if $\Delta$ has no
parabolic generators and $o(\zeta)=o(\rho(\zeta))$ for each elliptic generator.

\item A pair $\Gamma<\Delta$ is called \emph{tight} if $\Delta$ has at least
one parabolic generator and $o(\zeta)=o(\rho(\zeta))$ for each elliptic generator.
\end{itemize}
\end{definition}

\begin{remark}
The definition depends only on the cycle types and not the permutations
themselves. Hence, the definition depends only on the signature pair and may
be applied to a numerical projection $P:\mathcal{S}(\Gamma)\rightarrow
\mathcal{S}(\Delta)$.
\end{remark}

\begin{proposition}
Let $\Gamma<\Delta$ be a tight pair where $\Delta$ has $q$ parabolic elements.
Then there is a $q$ parameter family $\Gamma(\ell_{1},\ldots,\ell_{q}%
)<\Delta(\ell_{1},\ldots,\ell_{q})$ such that each member of the family has

\begin{itemize}
\item the same codimension $d(\Gamma,\Delta)$

\item the same index $[\Delta:\Gamma]$

\item the same monodromy $M(\Delta,\Gamma)$

\item the same word map
\end{itemize}
\end{proposition}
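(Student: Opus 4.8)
The plan is to build the family by ``opening up'' each parabolic generator of $\Delta$ into an elliptic generator whose order is the free parameter, while keeping the combinatorial data of the inclusion fixed. Concretely, start from the tight pair $\Gamma<\Delta$ with monodromy $\rho\colon\Delta\to\Sigma_m$, canonical generators $\zeta_1,\dots,\zeta_t$ (elliptic) and $\delta_1,\dots,\delta_q$ (parabolic), and set $\mathcal{P}=(\rho(\zeta_1),\dots,\rho(\zeta_t),\rho(\delta_1),\dots,\rho(\delta_q))$. Since the pair is tight, $o(\zeta_j)=o(\rho(\zeta_j))$ for the elliptic generators; the parabolic $\delta_j$ have ``infinite order'' and their images $\rho(\delta_j)$ have some finite cycle type $p_{t+j}$. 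For parameters $\ell_1,\dots,\ell_q$ (each a multiple of $\mathrm{lcm}$ of the cycle lengths of $\rho(\delta_j)$, or simply large enough), define $\Delta(\ell_1,\dots,\ell_q)$ to be the Fuchsian group with the same generators but with $\delta_j$ replaced by an elliptic generator of order $\ell_j$, i.e.\ with presentation obtained from \ref{eq-relsparemetric} by adjoining $\delta_j^{\ell_j}=1$. One checks this signature is still hyperbolic for $\ell_j$ large, and then appeals to the Riemann existence theorem variant (the theorem just before ``Now we turn our attention to word maps'') applied to the cycle vector $P=(p_1,\dots,p_{t+q})$: the vector $\mathcal{P}$ is a transitive $P$-monodromy vector for the enlarged signature, so it realizes a subgroup $\Gamma(\ell_1,\dots,\ell_q)<\Delta(\ell_1,\dots,\ell_q)$ whose signature is $P$-compatible with that of $\Delta(\ell_1,\dots,\ell_q)$, and which specializes to $\Gamma<\Delta$ when we ``close up'' the parabolics.

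The four invariance claims then follow from the fact that all of them are determined by $\mathcal{P}$ and $P$ alone, which are held fixed across the family. First, the monodromy: by construction $M(\Delta(\vec\ell),\Gamma(\vec\ell))=\langle\pi_1,\dots,\pi_{t+q}\rangle$ is the same transitive subgroup of $\Sigma_m$ for every $\vec\ell$, since the generating permutations never change. The index $[\Delta(\vec\ell):\Gamma(\vec\ell)]=m$ is constant because $\Gamma(\vec\ell)$ is a point stabilizer in the degree-$m$ action $\rho$. Next, the codimension: from the formulas in the ``Fuchsian group invariants'' subsection, $d(\Delta(\vec\ell))=t+q-3$ is independent of $\vec\ell$, and $d(\Gamma(\vec\ell))$ equals $|\mathcal{G}_1|-3$ where $|\mathcal{G}_1|$ is the number of canonical generators of $\Gamma(\vec\ell)$; by Theorem \ref{th-Singerman1} (or rather its numerical content, which depends only on the cycle types $p_j$), the number of induced generators of $\Gamma(\vec\ell)$ from each generator of $\Delta(\vec\ell)$ is the number of short cycles, a quantity read off from $P$, hence constant, so $d(\Gamma,\Delta)=d(\Gamma(\vec\ell))-d(\Delta(\vec\ell))$ is constant. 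Finally, the word map: the algorithm of \cite{Brou3} that computes the words $w_i$ expressing generators of $\Gamma$ in those of $\Delta$ takes as input only the permutations $\pi_1,\dots,\pi_{t+q}$ and the coset-graph/Reidemeister--Schreier combinatorics, none of which depends on the orders $\ell_j$; hence the output word map is literally the same tuple of words for every member of the family.

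The one point requiring genuine care — the main obstacle — is the passage from the tight pair (with parabolics) to the elliptic family, i.e.\ making rigorous that the monodromy vector $\mathcal{P}$ which comes from the original tight inclusion is \emph{legitimately reusable} as a monodromy vector for the modified signature, and that the resulting $\Gamma(\vec\ell)$ genuinely has the signature predicted by $P$-compatibility rather than a degenerate one. This is where the hypothesis $o(\zeta)=o(\rho(\zeta))$ on elliptic generators is essential: it guarantees that when we lift, no elliptic generator of $\Delta$ contributes a ``shorter'' cycle that would create an unexpected extra elliptic generator of $\Gamma$ — so the only new periods in $\Gamma(\vec\ell)$ are the controlled ones coming from the cycles of $\rho(\delta_j)$, of orders $\ell_j/(\text{cycle length})$, and these vary predictably with $\ell_j$. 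I would handle this by invoking Theorem \ref{th-Singerman1} directly on the enlarged pair to compute $\mathcal{S}(\Gamma(\vec\ell))$ from $\mathcal{S}(\Delta(\vec\ell))$ and the fixed cycle vector $P$, then verify hyperbolicity by the Riemann--Hurwitz formula for $\vec\ell$ in the claimed range, and finally cite the existence theorem to produce the actual subgroup. The remaining steps (constancy of the four invariants) are then bookkeeping with the formulas already recorded in the excerpt.
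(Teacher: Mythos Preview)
The paper does not actually supply a proof of this proposition; it is stated and immediately followed by a remark, with details deferred to \cite{Brou3} (consistent with the abstract's warning that proofs appear elsewhere). Your plan is the natural construction that the surrounding text implicitly points toward: freeze the monodromy vector $\mathcal{P}$ of the tight pair, replace each parabolic $\delta_j$ by an elliptic generator of order $\ell_j$ exceeding every cycle length of $\rho(\delta_j)$, and invoke the Riemann-existence variant to realize $\Gamma(\vec\ell)$ inside $\Delta(\vec\ell)$. Since index, monodromy group, generator counts (via Theorem~\ref{th-Singerman1}), and the Reidemeister--Schreier combinatorics underlying the word map all depend only on $\mathcal{P}$, they are constant across the family. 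This is correct.

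One small recalibration: you locate the ``main obstacle'' in checking that elliptic generators of $\Delta$ do not sprout unexpected induced generators in $\Gamma(\vec\ell)$. But the elliptic generators are literally unchanged when you vary $\vec\ell$, so their induced-generator count is trivially stable regardless of tightness. The genuine role of the tight hypothesis $o(\zeta)=o(\rho(\zeta))$ is rather to pin the number of free parameters at exactly $q$: it says the elliptic periods are already forced by the monodromy, so only the $q$ parabolic slots carry degrees of freedom. Without tightness the same construction would still produce a family, but with more than $q$ parameters (one could also inflate any elliptic period with $o(\zeta)>o(\rho(\zeta))$), and the tight pair would not be the canonical ``parent'' described in the subsequent remark. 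This does not invalidate your argument; it just means the delicate step is more bookkeeping than obstacle.
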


\begin{remark}
Every Fuchsian group pair is constrained or belongs to a unique family as
above. The tight pair defining the family is called the \emph{parent tight
pair}.
\end{remark}

\begin{example}
The triangle group family $T(2,d,2d)<T(2,3,2d)$ comes from the tight pair
$T(2,\infty,\infty)<T(2,3,\infty)$. The monodromy vector is $((1,2),$
$(1,2,3),$ $(1,3))$.
\end{example}

\subsection{Classification steps for pairs}

Here are steps for classification of the pairs $\Gamma_{N}<\Gamma_{A}%
.$\newline

\noindent\textit{Classify numerical projections by codimension.} For each
codimension there are a finite number of constrained pairs and a finite number
of tight pairs of numerical projections of signatures. The list of
codimensions will depend on how the signature pairs have been limited.\newline

\noindent\textit{Compute monodromy vectors.} For each candidate signature
pair, compute all the compatible monodromy vectors up to conjugacy
equivalence. Each constrained numerical projection gives rise to a finite
number (possibly none) of pairs $\Gamma_{N}<\Gamma_{A}$. Each tight numerical
projection gives rise to a finite number (possibly none) of parametric family
of pairs $\Gamma(\ell_{1},\ldots,\ell_{q})<\Delta(\ell_{1},\ldots,\ell_{q})$
all with the same monodromy. First one considers primitive pairs where
$M(\Delta,\Gamma)$ is a primitive permutation group. This can be done by
computer calculation and classification of primitive permutation groups (use
Magma or GAP). In the general case there is a tower $\Gamma_{N}=\Gamma
_{1}<\cdots<\Gamma_{e}=\Gamma_{A}$ such that each inclusion $\Gamma_{i}%
<\Gamma_{i+1}$ is a primitive pair, already classified. A tower may be fused
together by using word maps and the Todd Coxeter algorithm. An example of a
tower is $T_{7,7,7}<T_{3,3,7}<T_{2,3,7},$ which occurs for the $7$-gonal Klein quartic.

\subsection{Maximal actions and signatures}

Given a known group $G$ of automorphisms of a surface $S$, we want to know if
$G=A,$ i.e., $G$ has a maximal action. To demonstrate that $G$ already has a
maximal action in our examples in Section \ref{sec-samp}, we will use a simple
test on the signatures. Our test rests on the concept of finite maximality
developed in \cite{Sing2}. A Fuchsian group $\Gamma$ is called finitely
maximal if $\Gamma$ is not contained in any other Fuchsian group with finite
index. In \cite{Sing2} Singerman determines which Fuchsian groups are finitely maximal.

Now suppose that $G$ acts on $S,$ then we have
\begin{equation}%
\begin{array}
[c]{ccccc}%
\Pi & \hookrightarrow & \Gamma_{G} & \hookrightarrow & \Gamma_{A}\\
\downarrow\mathcal{\eta} &  & \downarrow\mathcal{\eta} &  & \downarrow
\mathcal{\eta}\\
\left\langle 1\right\rangle  & \hookrightarrow & G & \hookrightarrow & A
\end{array}
\label{eq-full}%
\end{equation}
If $\Gamma_{G}$ is finitely maximal then $\Gamma_{G}=\Gamma_{A}.$ If
$\Gamma_{G}$ is not finitely maximal we have
\[
\frac{\left\vert A\right\vert }{\left\vert G\right\vert }=\left\vert
\Gamma_{A}/\Gamma_{G}\right\vert =\frac{A(\Gamma_{G})}{A(\Gamma_{A})}%
=\frac{\mu(\Gamma_{G})}{\mu(\Gamma_{A})}%
\]
where $\left\vert A\right\vert /\left\vert G\right\vert $ is an integer
$k\geq2.$ If the signature of $G$ is $(m_{1},m_{2},\dots,m_{r})$ and the
signature of $A$ is $(n_{1},n_{2},\dots,n_{t})$ then this may be rewritten.
\begin{equation}
k=\frac{-2+%
{\displaystyle\sum\limits_{i=1}^{r}}
\left(  1-\frac{1}{m_{i}}\right)  }{-2+%
{\displaystyle\sum\limits_{j=1}^{t}}
\left(  1-\frac{1}{n_{j}}\right)  }=\frac{r-2-%
{\displaystyle\sum\limits_{i=1}^{r}}
\frac{1}{m_{i}}}{t-2+%
{\displaystyle\sum\limits_{j=1}^{t}}
\frac{1}{n_{j}}}\label{eq-maximal}%
\end{equation}
where $3\leq t\leq r$ and $m_{i}$ divides some $n_{j}$ for every $i.$ Equation
\ref{eq-maximal} provides a restriction which may be enough to prove finite
maximality since the quotient on the right hand side must be an integer.
Rather than state and prove a general result we give an example sufficient for
our needs. The example also follows from examining Singerman's list in
\cite{Sing2}.

\begin{example}
\label{ex-23mmax}The Fuchsian group with signature $(2,3,m),$ $m\geq7$ is
finitely maximal. To prove this let $h=t-2+%
{\displaystyle\sum\limits_{j=1}^{t}}
\frac{1}{n_{j}}$ (note that $t=3).$ Then
\[
1-\frac{1}{2}-\frac{1}{3}-\frac{1}{m}=kh,\text{ or }h=\frac{1}{6k}\frac
{m-6}{m}\text{ or }m=\frac{6}{6kh-1}%
\]
Since $k\geq2$, then $h<\frac{1}{12},$ and there are only a finite number of
signatures for which $h<\frac{1}{12}$ namely, $(2,3,7),$ $(2,3,8),$ $(2,3,9),$
$(2,3,10),$ $(2,3,11),$ and $(2,4,5).$ None of these yield an integer value
for $m$ for any integer value of $k.$
\end{example}

\section{Constraints on signatures\label{sec-const}}

As alluded to in Section $1$ the full classification problem of cyclic
$n$-gonal surfaces and their automorphism groups is too complex to be
completed in its entirety. In this section we discuss some methods to limit
the possible signature pairs $\mathcal{S}(\Gamma_{A})$ so that the problem is
more tractable. The limitations are chosen because of links to interesting
group theoretic, geometrical or arithmetic properties of the restrictions. As
discussed in the introduction there are two constraints we can consider.

\begin{itemize}
\item The action of $C$ on $S$ is weakly malnormal.

\item $S$ is a quasi-platonic surface.
\end{itemize}

\noindent The constraints in force because of weak normality have been
completely described and proven in \cite{Brou-Woo2}. Later we recall the main
theorem on signatures in that work, Theorem \ref{thm-sigsweak} below, and
indicate how the theorem may be proven by consideration of the action of $A$
on the singular $A$-orbits on $S.$ The constraints due to $S$ being a
quasi-platonic surface are not well known at this time other than to mention
that the signature of $A$ is quite restricted, and the potential for
application to dessins.

To understand the simplification offered by weakly malnormal actions we first
have to understand strong branching. Following that, we analyze of the action
of $A$ on the points lying over the branch points of $S\rightarrow S/A.$ This
analysis can be used to prove the signature theorem for weakly malnormal actions.

\subsection{Strong branching and weak normality $C<A$}

Previous work \cite{Acc}, \cite{Kont}, \cite{Sh} \cite{Woo1}, and \cite{Woo2}
has shown that if the $n$-gonal morphism $\pi_{C}:S\rightarrow S/C$ is highly
ramified then we often have $C\trianglelefteq A.$ This greatly simplifies the
calculation of $A$ since the calculations in Section \ref{sec-bot} may be
skipped. Some papers are restricted to the normal case \cite{Acc},
\cite{Kont}, \cite{Sh} \cite{Woo1}. The non-normal case has been considered in
\cite{Sh} and \cite{Woo2}. In \cite{Acc} Accola introduced a precise measure
of \textquotedblleft highly ramified\textquotedblright\ called strong
branching. Strong branching is a condition that guarantees normality in many
cases, in particular the prime cyclic case, and was used in \cite{Kont}, and
\cite{Woo2}. Strong branching may be used to conclude that is the genus of a
cyclic $n$-gonal surface is sufficiently large then $C\trianglelefteq A.$

An unramified covering $\pi:S_{1}\rightarrow S_{2}$ of degree $n$ satisfies
$2\sigma_{1}-2=n\left(  2\sigma_{2}-2\right)  .$ If the covering is ramified
then the formula is modified to:
\[
2\sigma_{1}-2=n\left(  2\sigma_{2}-2\right)  +R_{\pi}%
\]
where $R_{\pi}$ may be determined from the Riemann-Hurwitz formula. Accola
\cite{Acc} calls $\pi$ a strongly branched cover if
\[
R_{\pi}>2n(n-1)(\sigma_{2}+1)
\]
or
\[
\sigma_{1}>n^{2}\sigma_{2}+(n-1)^{2}.
\]
If $S_{2}$ has genus $0$ then the formulas are
\[
R_{\pi}>2n(n-1)
\]
or
\[
\sigma_{1}>(n-1)^{2}.
\]
In the case at hand, $\pi:S\rightarrow S/C$ given by equation (\ref{eq-basic}%
), if we define
\[
d_{i}=(n,p_{i}),\text{ }n_{i}=\frac{n}{d_{i}}%
\]
then
\[
R_{\pi}=n\sum\limits_{i=1}^{t}\left(  1-\frac{1}{n_{i}}\right)  =\sum
\limits_{i=1}^{t}\left(  n-d_{i}\right)  .
\]
We see, that $S\rightarrow S/C$ is strongly branched if, roughly, the right
hand side of equation (\ref{eq-basic}) has many factors. The main fact we need
about strong branching is the following.

\begin{proposition}
\label{pr-strong} Let $H$ be a group of automorphisms acting on a surface $S$
such that $S\rightarrow S/H$ is strongly branched. Then there is a unique
minimal, normal, nontrivial subgroup $L$ of $\mathrm{Aut(}{S)}$ such that
$L\leq H$, and $S\rightarrow S/L$ is strongly branched.
\end{proposition}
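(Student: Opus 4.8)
The plan is to isolate one genus estimate due to Accola and then realize $L$ as the intersection of a finite family of normal subgroups. Throughout put $A:=\mathrm{Aut}(S)$, and for $G\le A$ write $\pi_{G}\colon S\to S/G$. The one substantive ingredient, which is precisely what Accola's study of strongly branched covers in \cite{Acc} supplies, is the following statement, which I will refer to as the \emph{Key Fact}: if $G_{1},G_{2}\le A$ and both $\pi_{G_{1}}$ and $\pi_{G_{2}}$ are strongly branched, then $G_{1}\cap G_{2}\neq\langle1\rangle$ and $\pi_{G_{1}\cap G_{2}}$ is strongly branched as well. Note that a strongly branched cover has degree at least $2$, so a strongly branched quotient is automatically a quotient by a nontrivial group; I use this below without comment.

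Granting the Key Fact, I would first check that the family
\[
\mathcal{L}=\{\,L\trianglelefteq A:\ L\le H\ \text{and}\ \pi_{L}\ \text{is strongly branched}\,\}
\]
is non-empty. Conjugation by any $a\in A$ is a conformal automorphism of $S$, so $S/H$ and $S/(aHa^{-1})$ are conformally equivalent with identical ramification data; hence $\pi_{aHa^{-1}}$ is strongly branched for every $a\in A$. Since $A$ is finite, $H$ has only finitely many conjugates $H=H_{1},\dots,H_{k}$, and $k-1$ applications of the Key Fact to successive intersections show that $\mathrm{Core}_{A}(H)=H_{1}\cap\dots\cap H_{k}$ is nontrivial with $\pi_{\mathrm{Core}_{A}(H)}$ strongly branched. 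As $\mathrm{Core}_{A}(H)$ is normal in $A$ and contained in $H$, it belongs to $\mathcal{L}$, so $\mathcal{L}\neq\emptyset$, and every member of $\mathcal{L}$ is automatically nontrivial.

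Next I would verify that $\mathcal{L}$ is closed under intersection. If $L_{1},L_{2}\in\mathcal{L}$, then $L_{1}\cap L_{2}$ is an intersection of normal subgroups of $A$, hence normal in $A$, and it is contained in $H$; the Key Fact moreover gives that $L_{1}\cap L_{2}$ is nontrivial and that $\pi_{L_{1}\cap L_{2}}$ is strongly branched, so $L_{1}\cap L_{2}\in\mathcal{L}$. Thus $\mathcal{L}$ is a finite, non-empty collection of subgroups of $A$ closed under pairwise intersection, so it contains a least element $L=\bigcap_{M\in\mathcal{L}}M$. This $L$ is a minimal normal, nontrivial subgroup of $A$ that is contained in $H$ and has $\pi_{L}$ strongly branched, and it is the unique such subgroup: if $L'$ were another minimal member of $\mathcal{L}$, then $L\cap L'\in\mathcal{L}$ is contained in both, so minimality of each forces $L=L\cap L'=L'$. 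This establishes the proposition, modulo the Key Fact.

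Finally, the Key Fact itself is where the genuine work sits, and here I would simply follow Accola's strong-branching inequalities in \cite{Acc}. The mechanism is Riemann-Hurwitz applied across the diagram of quotient maps $S\to S/(G_{1}\cap G_{2})\to S/G_{i}$ for $i=1,2$, together with $S\to S/\langle G_{1},G_{2}\rangle$: strong branching of $\pi_{G_{1}}$ and of $\pi_{G_{2}}$ forces the corresponding ramification divisors to be so large that the orbit genus of $S/(G_{1}\cap G_{2})$ is small enough, relative to $|G_{1}\cap G_{2}|$ and $\sigma(S)$, to make $\pi_{G_{1}\cap G_{2}}$ strongly branched in turn; the degenerate possibility $G_{1}\cap G_{2}=\langle1\rangle$, in which $S/(G_{1}\cap G_{2})=S$ and the cover is trivial, is excluded by the same inequalities. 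I expect the main obstacle to be exactly this last point: controlling the orbit genus of $S/(G_{1}\cap G_{2})$ --- equivalently, showing that the covering groups of two strongly branched quotients must overlap substantially rather than being essentially independent. This is the step at which Accola's quantitative hypotheses on the ramification term $R_{\pi}$ are indispensable, and it cannot be replaced by a soft group-theoretic argument.
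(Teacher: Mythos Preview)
The paper does not prove Proposition~\ref{pr-strong} at all: it is stated as a known result and attributed to Accola~\cite{Acc}, with no argument supplied. So there is no ``paper's own proof'' to compare against; your write-up is effectively a reconstruction of Accola's argument rather than an alternative to anything in the present paper.

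That said, your reconstruction is sound. The reduction to the Key Fact --- that if $\pi_{G_{1}}$ and $\pi_{G_{2}}$ are both strongly branched then $G_{1}\cap G_{2}$ is nontrivial and $\pi_{G_{1}\cap G_{2}}$ is again strongly branched --- is exactly the content of Accola's lemma in~\cite{Acc}, and once you have it the lattice argument you give (pass to $\mathrm{Core}_{A}(H)$ to get a nonempty family $\mathcal{L}$, then intersect down to the least element) is the standard and correct way to extract the unique minimal $L$. Your candid flagging of the genus estimate for $S/(G_{1}\cap G_{2})$ as the real obstacle is accurate: that is precisely the step in~\cite{Acc} where the numerical strong-branching hypothesis $R_{\pi}>2n(n-1)(\sigma_{2}+1)$ is spent, via the Castelnuovo--Severi style inequality, and it does not follow from soft considerations. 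One small point of phrasing: in the last paragraph you say ``minimal normal, nontrivial subgroup of $A$'', which could be misread as asserting that $L$ is a minimal normal subgroup of $A$ in the absolute group-theoretic sense. What you have actually shown (and what the proposition asserts) is that $L$ is the least element of $\mathcal{L}$; it need not be minimal normal in $A$ outright.
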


In \cite{Brou-Woo1} the concept of weak normality was introduced, to take
advantage of strong branching. It appears to be the weakest group theoretic
constraint such that we can take advantage of strong branching.

\begin{definition}
Let $H\leq G$ be a pair of groups and let $N=N_{G}(H)$. Then $H$ is weakly
malnormal in $G$ if for each $g\in G-N$ we have a trivial intersection $H\cap
H^{g}=\left\langle 1\right\rangle $. A group action of $H$ on a surface $S$ is
called weakly malnormal if $H$ is a weakly malnormal subgroup of
$A=\mathrm{Aut}(S)$.
\end{definition}

\begin{remark}
We can make some immediate remarks.

\begin{itemize}
\item Normal subgroups are trivially weakly malnormal.

\item If $H\leq G$ is a cyclic subgroup of prime order then $H$ is weakly
malnormal in $G$.

\item If $C\leq A$ is a cyclic subgroup of $A=Aut(S)$ and the map
$S\rightarrow S/C$ is fully ramified, then $C$ is weakly malnormal in $A$.

\item Let $H\leq G$ be a pair of groups such that $H$ is weakly malnormal in
$G$, but not normal. If $K$ is a nontrivial subgroup of $H$, then
$N_{G}(H)=N_{G}(K).$

\item Assume the same hypotheses as above. Then the representation of $G$ on
the left or right cosets of $H$ is faithful, for the kernel of the
representation is $\bigcap_{g\in G}H^{g}.$
\end{itemize}
\end{remark}

The main use of weak normality is given in the following proposition, which
shows that the non-normal cases occur only for small genus. For instance the
hyperelliptic involution is always normal and any non-normal cyclic trigonal
case must occur in genus $2,3$, or $4$.

\begin{proposition}
Let $H$ be a group of automorphisms acting on a surface $S$ such that
$S\rightarrow S/H$ is strongly branched and $H$ is weakly malnormal in
$A=\mathrm{Aut}(S)$. Then $H$ is normal in $A$. If the action of a group $C$
of order $n$ on a surface of genus $\sigma>(n-1)^{2}$ is weakly malnormal and
$S/C$ has genus zero then $C$ is normal in $A$.
\end{proposition}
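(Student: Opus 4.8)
The plan is to deduce the first sentence from Proposition~\ref{pr-strong} together with the weak malnormality hypothesis, and then to observe that the second sentence is merely a specialization where one feeds in the explicit strong-branching bound for covers of the sphere.

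First I would prove the general statement. Let $H$ act on $S$ with $S\to S/H$ strongly branched and $H$ weakly malnormal in $A=\mathrm{Aut}(S)$. By Proposition~\ref{pr-strong} there is a unique minimal normal nontrivial subgroup $L\trianglelefteq A$ with $L\leq H$ (and $S\to S/L$ strongly branched, though I will not need that last clause here). Since $L$ is normal in $A$, for every $g\in A$ we have $L=L^{g}\leq H^{g}$, hence $L\leq H\cap H^{g}$. If $H$ were not normal, weak malnormality would say that for $g\in A-N_A(H)$ the intersection $H\cap H^{g}$ is trivial, forcing $L=\langle 1\rangle$, contradicting nontriviality of $L$. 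Therefore $A-N_A(H)=\emptyset$, i.e. $N_A(H)=A$, which is exactly the assertion that $H\trianglelefteq A$.

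Next I would handle the second sentence. Suppose $C$ has order $n$, acts on $S$ with $S/C$ of genus zero, the action is weakly malnormal, and $\sigma=\sigma(S)>(n-1)^{2}$. The only thing left to check is that $S\to S/C$ is strongly branched in Accola's sense, for then the first part applies verbatim with $H=C$. But Accola's definition, as recalled in the excerpt just above, says precisely that a degree-$n$ cover of a genus-zero surface is strongly branched when $\sigma_{1}>(n-1)^{2}$; this is the genus-zero specialization of the inequality $\sigma_{1}>n^{2}\sigma_{2}+(n-1)^{2}$. So the hypothesis $\sigma>(n-1)^{2}$ is exactly ``$S\to S/C$ is strongly branched,'' and the conclusion $C\trianglelefteq A$ follows from the general statement already proved.

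I do not expect a genuine obstacle here: the proof is essentially a two-line group-theoretic argument layered on top of Proposition~\ref{pr-strong}, whose proof (the real content, via Accola's work) is being cited rather than reproved. The only point requiring a moment's care is the logical structure of weak malnormality --- one must note that weak malnormality is vacuous exactly when $H$ is normal, so the correct phrasing is ``either $H$ is already normal, or there exists $g\notin N_A(H)$ with $H\cap H^g$ trivial,'' and it is the second alternative that the existence of a nontrivial normal $L\leq H$ rules out. Making that dichotomy explicit is what keeps the argument honest.
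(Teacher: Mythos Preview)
Your argument is correct and is exactly the intended one: combine Accola's minimal normal subgroup $L\trianglelefteq A$ from Proposition~\ref{pr-strong} with weak malnormality to force $N_A(H)=A$, then specialize via the genus-zero strong-branching bound $\sigma>(n-1)^2$. The paper itself does not write out a proof of this proposition (it defers details to the forthcoming \cite{Brou-Woo2}), so there is nothing further to compare; your write-up could stand in for the omitted proof without change.
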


\begin{example}
There are examples of cyclic $4$-gonal actions on surfaces of arbitrarily high
genus, but where $C$ is not normal in $A.$ See \cite{Brou-Woo2}.
\end{example}

The main restriction imposed for weakly malnormal actions is the signature
theorem below. The theorem is proved in \cite{Brou-Woo2} by directly working
with canonical generators, though it may also be proven from the analysis in
the next subsection.

\begin{theorem}
\label{thm-sigsweak} If the action of $C$ on $S$ is weakly malnormal, then
$\Gamma_{N}$ has at most $3$ additional periods to $\Gamma_{A}$. The
signatures for $\Gamma_{A}$ and $\Gamma_{N}$ appear as a pair in Table 5,
where $(a_{1},a_{2},a_{3})$ or $(k,k)$ is the signature of $K=\Gamma
_{N}/\Gamma_{C}$. The column labeled Codim is the Teichm\"{u}ller codimension
$d(\Gamma_{A},\Gamma_{N}).$
\end{theorem}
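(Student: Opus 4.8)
Following the route indicated in the text, the plan is to analyze the action of $A$ on the points lying over the branch points of $\pi_A\colon S\to S/A$, rather than to manipulate canonical generators directly as in \cite{Brou-Woo2}. Everything rests on one observation about point stabilizers: if $P\in S$ has non-trivial stabilizer $C_P:=C\cap\mathrm{Stab}_A(P)$ in $C$, then $\mathrm{Stab}_A(P)\leq N$. Indeed $\mathrm{Stab}_A(P)$ is cyclic, so for any $g\in\mathrm{Stab}_A(P)$ the conjugate $gC_Pg^{-1}$ — a subgroup of $\mathrm{Stab}_A(P)$, since $g$ and $C_P$ both lie in it — has the same order as $C_P$ and hence equals it; as $gC_Pg^{-1}\leq C^g$ as well, this gives $1\neq C_P\leq C\cap C^g$, and weak malnormality forces $g\in N$. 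When $P$ lies over a \emph{regular} $K$-orbit in the branch locus of $\pi_C$ the $K$-stabilizer there is trivial, so $\mathrm{Stab}_N(P)=C_P$, and the displayed inclusion upgrades this to $\mathrm{Stab}_A(P)=\mathrm{Stab}_N(P)=C_P$.

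The period bound is then a counting argument. Factoring $\pi_N=\pi_K\circ\pi_C$ through the quotient $\pi_K\colon S/C=\mathbb{P}^1\to S/N$ by the finite spherical group $K$, the branch points of $\pi_N$ consist of the $e\leq 3$ images of the singular $K$-orbits together with one point for each of the $w$ regular $K$-orbits meeting the branch locus $\{a_1,\dots,a_r\}$ of $\pi_C$; thus $\mathcal{S}(\Gamma_N)$ has $s=e+w$ periods, in the $K$-compatible form. For each of the $w$ ``regular'' periods of $\Gamma_N$, the observation above supplies a point $P$ with $\mathrm{Stab}_A(P)=C_P$ of that order, so the image of $P$ in $S/A$ is a branch point of $\pi_A$ \emph{of the same period}, and this assignment is injective: if two such points $P$ and $P'=gP$ had a common image in $S/A$, then $C_{P'}=\mathrm{Stab}_A(P')=gC_Pg^{-1}\leq C\cap C^g$, forcing $g\in N$ and hence $P,P'$ into one $N$-orbit. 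So $\Gamma_A$ has at least $w$ periods and the Teichm\"{u}ller codimension equals $s-t\leq(e+w)-w=e\leq 3$. (Both groups have orbit genus zero — $S/N\cong\mathbb{P}^1$, and Riemann--Hurwitz for $\mathbb{H}/\Gamma_N\to\mathbb{H}/\Gamma_A$ forces $S/A$ to be a sphere too — so ``additional periods'' and codimension both reduce to $s-t$.)

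To produce Table 5 one then enumerates. By Theorem \ref{th-sphere}, $K$ is one of $C_k,D_k,A_4,S_4,A_5$, and each choice fixes $e$ and the shape $(a_1,a_2,a_3)$ or $(k,k)$ of the $K$-generator periods. The $w$ non-$K$-generator periods of $\Gamma_N$ recur with identical values among those of $\Gamma_A$, while the $e$ periods $a_ib_i$ of $\Gamma_N$ map to periods of $\Gamma_A$ that are multiples of them, the multipliers being the local degrees of $\mathbb{H}/\Gamma_N\to\mathbb{H}/\Gamma_A$ recorded in the monodromy group $M(\Gamma_A,\Gamma_N)$ of Section \ref{sec-bot}. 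Imposing the area identity $\mu(\Gamma_N)=[\Gamma_A:\Gamma_N]\,\mu(\Gamma_A)$, the hyperbolicity $\mu(\Gamma_A)>0$, the constraints on $K$ in Theorem \ref{th-sphere}, and the requirement that the kernel $\Gamma_C$ of $\Gamma_N\to K$ admit a cyclic surface-kernel quotient (Harvey's conditions, Subsection \ref{subsec-findseq}) leaves only finitely many signature pairs, which one then lists.

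The conceptual content is the stabilizer observation and the counting argument of the first two paragraphs; the substance is the enumeration. Organizing the casework — over the five types of $K$, the codimensions $0,1,2,3$, and the admissible monodromy actions on the at most three $K$-generator periods — and verifying that nothing outside Table 5 survives all the constraints is where the real effort lies, and is what I expect to be the main obstacle; it is also essentially what \cite{Brou-Woo2} carries out, there working directly with canonical generators and Singerman's induced-generator theorem (Theorem \ref{th-Singerman1}), the bound ``at most three extra periods'' emerging from the cycle structure of the coset permutations $\rho(\zeta_j)$ attached to the canonical generators of $\Gamma_A$.
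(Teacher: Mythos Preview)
Your proposal is correct and follows essentially the same route as the paper's own sketch: the stabilizer observation is exactly Lemma~\ref{lm-orbsweak1}(1), the injectivity argument is Lemma~\ref{lm-orbsweak1}(2), and the counting to get $s-t\leq e\leq 3$ is the paper's concluding paragraph before the Klein quartic example. One small correction: Table~5 is not a finite list of signature pairs obtained by imposing area and Harvey constraints---it is parametric, recording only the structural shape (the $r$ shared periods $n_1,\dots,n_r$ plus up to three $K$-generator periods on the $\Gamma_N$ side and up to three fill-in periods $b_i$ on the $\Gamma_A$ side), so the ``enumeration'' is over the seven cases 0A--3A rather than over specific signatures.
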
%

\[%
\begin{tabular}
[c]{c}%
\textbf{Table 5}\\
$%
\begin{tabular}
[c]{||l|l|l|l||}\hline\hline
Case & Codim & Signature of $\Gamma_{N}$ & Signature of $\Gamma_{A}$\\\hline
$0A$ & $0$ & $(a_{1}m_{1},a_{2}m_{2},a_{3}m_{3},n_{1},\dots,n_{r})$ &
$(b_{1},b_{2},b_{3},n_{1},\dots,n_{r})$\\\hline
$0B$ & $0$ & $(km_{1},km_{2},n_{1},\dots,n_{r})$ & $(b_{1},b_{2},n_{1}%
,\dots,n_{r})$\\\hline
$1A$ & $1$ & $(a_{1}m_{1},a_{2}m_{2},a_{3}m_{3},n_{1},\dots,n_{r})$ &
$(b_{1},b_{2},n_{1},\dots,n_{r})$\\\hline
$1B$ & $1$ & $(km_{1},km_{2},n_{1},\dots,n_{r})$ & $(b_{1},n_{1},\dots,n_{r}%
)$\\\hline
$2A$ & $2$ & $(a_{1}m_{1},a_{2}m_{2},a_{3}m_{3},n_{1},\dots,n_{r})$ &
$(b_{1},n_{1},\dots,n_{r})$\\\hline
$2B$ & $2$ & $(km_{1},km_{2},n_{1},\dots,n_{r})$ & $(n_{1},\dots,n_{r}%
)$\\\hline
$3A$ & $3$ & $(a_{1}m_{1},a_{2}m_{2},a_{3}m_{3},n_{1},\dots,n_{r})$ &
$(n_{1},\dots,n_{r})$\\\hline\hline
\end{tabular}
\ \ $%
\end{tabular}
\ \ \ \
\]
\newline

\subsection{Orbits and induced generators}

We now return to the general situation. We want to closely link the signatures
of $\Gamma_{A}$ and $\Gamma_{N},$ by studying the singular $A$-orbits on $S$.

\begin{definition}
Let $H\subseteq A$ be any subgroup. The orbit $Hx$ is called $H$-regular if
$\left\vert Hx\right\vert =\left\vert H\right\vert ,$ and is called
$H$-singular if $\left\vert Hx\right\vert <\left\vert H\right\vert .$
\end{definition}

The facts in the following lemma are easily shown, we leave most details to
the reader.

\begin{lemma}
\label{lm-orbits}Let $H\subseteq M\subseteq A$ be subgroups of $A$. Then

\begin{enumerate}
\item If $Hx$ is singular then the order of the $H$-stabilizer $H_{y}$ of any
point $y\in Hx$ is $\left\vert H\right\vert /\left\vert Hx\right\vert .$

\item The orbit $Mx$ is a union of $H$-orbits, and $Mx$ is $M$-singular if any
of the $H$-orbits is $H$-singular.

\item Let $H\vartriangleleft M,$ $L=M/H,$ and $y$ be any point of $S$. Then
$My$ is a union of $H$-orbits of the same size. The number of $H$-orbits in an
$M$-orbit $My$ is less than $\left\vert L\right\vert $ if and only if $y$ is
fixed by an element of $M-H.$
\end{enumerate}
\end{lemma}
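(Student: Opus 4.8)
The plan is to verify the three assertions using only the orbit–stabilizer theorem and elementary facts about group actions, since $A$ acts on $S$ as a finite group of homeomorphisms and all orbits are finite. For statement 1, I would apply the orbit–stabilizer theorem directly: for any point $y$ lying in the orbit $Hx$ we have $\left\vert Hy\right\vert \left\vert H_y\right\vert = \left\vert H\right\vert$, and since $Hy = Hx$ as sets (orbits of a point coincide with the orbit of any of its members), this gives $\left\vert H_y\right\vert = \left\vert H\right\vert / \left\vert Hx\right\vert$; the hypothesis that $Hx$ is singular only serves to guarantee this number is greater than $1$. For statement 2, I would observe that $Mx$ is $M$-invariant and that $H \subseteq M$ acts on it, so it decomposes into $H$-orbits $Hx_1, \ldots, Hx_\ell$; if one of these, say $Hx_j$, is $H$-singular, then $\left\vert H_{x_j}\right\vert > 1$, hence $M_{x_j} \supseteq H_{x_j}$ is nontrivial, forcing $\left\vert Mx_j\right\vert = \left\vert Mx\right\vert < \left\vert M\right\vert$, i.e. $Mx$ is $M$-singular.

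For statement 3, the normality hypothesis $H \vartriangleleft M$ does the work. First I would note that $L = M/H$ permutes the set of $H$-orbits inside a fixed $M$-orbit $My$ transitively: for $mH \in L$ and an $H$-orbit $Hz \subseteq My$, the set $m(Hz) = (mHm^{-1})(mz) = H(mz)$ is again an $H$-orbit, and this action is transitive because $M$ acts transitively on $My$. By orbit–stabilizer applied to this $L$-action on the $H$-orbits, all $H$-orbits in $My$ have the same cardinality (they are permuted transitively by $L$, hence carried to one another by homeomorphisms coming from $M$, so in fact have equal size), and the number of $H$-orbits in $My$ equals $\left\vert L\right\vert / \left\vert L_{Hy}\right\vert$ where $L_{Hy}$ is the $L$-stabilizer of the $H$-orbit containing $y$. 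Thus the number of $H$-orbits is less than $\left\vert L\right\vert$ precisely when $L_{Hy}$ is nontrivial. The final step is to identify $L_{Hy}$: an element $mH \in L$ stabilizes the $H$-orbit $Hy$ exactly when $m(Hy) = Hy$, i.e. $my \in Hy$, i.e. there is $h \in H$ with $h^{-1}my \in \{y\}$ after adjusting — more carefully, $my = h y$ for some $h \in H$, equivalently $(h^{-1}m) y = y$ with $h^{-1}m \in M$; such an element lies in $M - H$ unless $m \in H$. So $L_{Hy}$ is nontrivial if and only if some element of $M - H$ fixes $y$, which is the claimed equivalence.

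The only point requiring a little care — and the one I would expect to be the main obstacle to a fully rigorous write-up — is the claim in statement 3 that the $H$-orbits inside $My$ all have the same size. This is not automatic from the fact that $L$ permutes them transitively unless one uses that the permutation is realized by actual homeomorphisms: the element $m \in M$ maps the $H$-orbit $Hz$ bijectively onto $H(mz)$, so cardinalities match. I would make this explicit rather than leaving it to "the action of $L$," since $L$ itself does not act on $S$ — only $M$ does, and it acts on the $H$-orbit space through the quotient $L$. Once that subtlety is handled, everything else is a routine unwinding of orbit–stabilizer, and I would indeed relegate the details to the reader as the paper suggests, spelling out only statement 3 in full.
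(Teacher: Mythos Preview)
Your proposal is correct and follows essentially the same approach as the paper: statements 1 and 2 are dispatched by orbit--stabilizer, and statement 3 is handled via the transitive action of $L=M/H$ on the $H$-orbits inside $My$. The one cosmetic difference is that the paper, after observing there are fewer than $\left\vert L\right\vert$ $H$-orbits, first finds an $H$-orbit $Hg_{1}y$ stabilized by some $g\in M-H$ and then conjugates by $g_{1}^{-1}$ to produce an element of $M-H$ fixing $y$ itself; your version applies orbit--stabilizer to the $L$-action directly, so that the stabilizer $L_{Hy}$ of the specific orbit $Hy$ is already nontrivial, and you unwind that condition without the conjugation step. Both arguments are equivalent, and your careful remark about why all $H$-orbits in $My$ have the same size (because elements of $M$ give explicit bijections $Hz\to H(mz)$) matches the paper's one-line justification ``since $M$ normalizes $H$ then $M$ permutes the $H$-orbits so they must all be the same size.''
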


\begin{proof}
Only Statement 3 requires any work. Let $My$ be any $M$-orbit, it is a
disjoint union of $H$-orbits. Since $M$ normalizes $H$ then $M$ permutes the
$H$-orbits so they must all be the same size. Again by normality, $L$ permutes
the $H$-orbits comprising $My$ transitively. If there are less than
$\left\vert L\right\vert $ $H$-orbits, then for some $g,g_{1}\in M,$ $g\in
M-H,$ $gHg_{1}y=Hg_{1}y.$ It follows that $gh_{1}g_{1}y=g_{1}y$ for some
$h_{1}\in H$ and so $g_{1}^{-1}gh_{1}g_{1}y=y.$ If $g_{1}^{-1}gh_{1}%
g_{1}=g_{1}^{-1}gg_{1}g_{1}^{-1}h_{1}g_{1}\in H$ then so must $g_{1}%
^{-1}gg_{1}\in H$ and hence $g\in H.$ This is a contradiction and so
$g_{1}^{-1}gh_{1}g_{1}\in M-H.$ On the other hand if $M_{y}$ is not contained
in $H$ then $L$ has a nontrivial fixed point when acting on the set of
$H$-orbits. It follows that there are fewer than $\left\vert L\right\vert $
$H$-orbits.
\end{proof}

Now suppose that $\zeta\in\Gamma_{A}$ is a canonical generator. The elliptic
element $\zeta$ has a unique fixed point $z$ $\in\mathbb{H},$ let $x=\pi_{\Pi
}(z)$ be the image on $S.$ The map $\eta:\Gamma_{A}\rightarrow A$ maps
$\left\langle \zeta\right\rangle $ isomorphically onto the stabilizer $A_{x}.$
If a conjugate of $\zeta$ is chosen we simply get another point of $Ax.$ Thus,
there is a 1-1 correspondence between the canonical generators of $\Gamma_{A}$
and the singular orbits of $A$
\[
\zeta\leftrightarrow\left\langle \zeta\right\rangle ^{\Gamma_{A}%
}\leftrightarrow Ax.
\]
Moreover,
\begin{equation}
\left\vert \left\langle \zeta\right\rangle \right\vert =\left\vert
A_{x}\right\vert =\left\vert A\right\vert /\left\vert Ax\right\vert
.\label{eq-order}%
\end{equation}
A similar statement applies to any subgroup $H$ $\subseteq A$ and the elliptic
canonical generators of the corresponding group $\Gamma_{H}.$ The following
proposition details the relationship between induced generators and singular orbits.

\begin{proposition}
\label{pr-orbits_gens}Let $H\subseteq M\subseteq A$ be a tower of groups and
$\Gamma_{H}\subseteq\Gamma_{M}\subseteq\Gamma_{A}$ the covering Fuchsian
groups. Assume that the genus of $S/H$ is zero. Then, we have the following.

\begin{enumerate}
\item The canonical generators $\zeta\in\Gamma_{H}$ are in 1-1 correspondence
with the $H$-singular orbits $Hx$ via%
\[
\zeta\leftrightarrow\left\langle \zeta\right\rangle ^{\Gamma_{H}%
}\leftrightarrow Hx,
\]
where $x=\pi_{\Pi}(z)$ for the fixed point $z$ of $\zeta,$ $\left\langle
\zeta\right\rangle ^{\Gamma_{H}}$ is a conjugacy class of stabilizers in
$\Gamma_{H}.$ The order of $\zeta$ is $\left\vert H_{x}\right\vert =\left\vert
H\right\vert /\left\vert Hx\right\vert .$

\item Suppose that $\zeta\in\Gamma_{A}$ is a canonical generator and $z,x$ are
as item 1$.$ Then the canonical generators of $H$ induced by $\zeta$ are in
1-1 correspondence to the singular $H$-orbits contained in $Ax.$ Moreover if
$Hy\subseteq Ax$ is a singular $H$-orbit then the order of the corresponding
induced canonical generator of $\Gamma_{H}$ is $\left\vert H\right\vert
/\left\vert Hy\right\vert .$

\item Let $gens(\Gamma)$ be a set of canonical generators of $\Gamma.$ Then
the signatures satisfy
\[
\left\vert gens(\Gamma_{A})\right\vert \leq\left\vert gens(\Gamma
_{M})\right\vert \leq\left\vert gens(\Gamma_{H})\right\vert .
\]

\end{enumerate}
\end{proposition}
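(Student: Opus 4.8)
The plan is to treat the three statements in order, since each uses the previous one together with the orbit bookkeeping of Lemma \ref{lm-orbits}. For statement 1, I would first recall from the discussion preceding the proposition that, because $S/H$ has genus zero, $\Gamma_H$ is a genus-zero Fuchsian group and Theorem \ref{th-sig} applies: its canonical generators (up to $\Gamma_H$-conjugacy) biject with the conjugacy classes of nontrivial elliptic stabilizers. A canonical generator $\zeta\in\Gamma_H$ has a unique fixed point $z\in\mathbb{H}$; I would set $x=\pi_\Pi(z)$ and observe that $\eta$ restricts to an isomorphism $\langle\zeta\rangle\xrightarrow{\sim}H_x$. Conjugating $\zeta$ inside $\Gamma_H$ moves $z$ and hence moves $x$ within the orbit $Hx$, so the $\Gamma_H$-class of $\zeta$ depends only on $Hx$; conversely any point with nontrivial $H$-stabilizer arises this way. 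Thus the correspondence $\zeta\leftrightarrow\langle\zeta\rangle^{\Gamma_H}\leftrightarrow Hx$ is a bijection onto the $H$-singular orbits, and the order formula $o(\zeta)=|H_x|=|H|/|Hx|$ is exactly equation \ref{eq-order} applied to $H$ in place of $A$, using statement 1 of Lemma \ref{lm-orbits}.

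For statement 2, I would invoke Theorem \ref{th-Singerman1} (Singerman) applied to the pair $\Gamma_H\leq\Gamma_A$: the canonical generators of $\Gamma_H$ induced by a fixed canonical generator $\zeta\in\Gamma_A$ correspond to the short cycles of $\rho(\zeta)$ acting on the cosets $\Gamma_A/\Gamma_H$, with orders read off from the cycle lengths. The key translation step is to identify the coset space $\Gamma_A/\Gamma_H$ geometrically with the orbit $Ax$ — indeed $\eta$ identifies $A/H$ (as an $A$-set) with $Ax$, and under this identification the $\langle\zeta\rangle=A_x$-orbits on $Ax$ are precisely the cycles of $\rho(\zeta)$. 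An $\langle\zeta\rangle$-orbit of length $<o(\zeta)$ on $Ax$ is a point of $Ax$ with nontrivial stabilizer under the corresponding conjugate of $\langle\zeta\rangle$ in $A$; intersecting that stabilizer with $H$ and using statement 2 of Lemma \ref{lm-orbits}, such a short cycle is exactly an $H$-singular orbit $Hy\subseteq Ax$. Combining Singerman's order formula with statement 1 already proved, the induced canonical generator of $\Gamma_H$ attached to $Hy$ has order $|H|/|Hy|$. The case $M$ strictly between is handled by the same argument applied to the pair $\Gamma_M\leq\Gamma_A$ and then $\Gamma_H\leq\Gamma_M$.

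For statement 3, I would argue that each singular $M$-orbit $Mx$ (respectively each singular $A$-orbit $Ax$) decomposes, by statement 2 of Lemma \ref{lm-orbits}, into at least one singular $H$-orbit (respectively singular $M$-orbit); since the canonical generators count the singular orbits by statements 1 and 2, the map sending the canonical generator of $\Gamma_M$ attached to $Mx$ to (one of) the canonical generators of $\Gamma_H$ inside it is well-defined and injective across distinct $\zeta$'s by the last clause of Theorem \ref{th-Singerman1}, giving $|gens(\Gamma_M)|\le|gens(\Gamma_H)|$, and similarly $|gens(\Gamma_A)|\le|gens(\Gamma_M)|$. The only subtlety — and the step I expect to require the most care — is the identification in statement 2 of the $\rho(\zeta)$-cycles on $\Gamma_A/\Gamma_H$ with $\langle\zeta\rangle$-orbits on $Ax$ in a way compatible with the $\Gamma_H$-conjugacy classes of induced generators; this is where one must be careful that Singerman's combinatorial count and the geometric orbit count are literally the same count, and that "short cycle" corresponds exactly to "$H$-singular orbit" and not merely to "orbit on which some element acts nontrivially." Once that dictionary is pinned down, everything else is a direct application of Lemma \ref{lm-orbits}, equation \ref{eq-order}, and Theorem \ref{th-Singerman1}. $\blacksquare$
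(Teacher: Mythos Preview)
Your treatment of statements 1 and 2 is essentially what the paper does: statement 1 is the content of the paragraph preceding the proposition, and statement 2 is indeed a reformulation of Singerman's Theorem \ref{th-Singerman1}. One small warning on statement 2: the literal identification of $A/H$ with $Ax$ as $A$-sets that you invoke is not correct, since $|A/H|=|A|/|H|$ while $|Ax|=|A|/|A_x|$. The correct dictionary is via double cosets: the $\langle\eta(\zeta)\rangle=A_x$-orbits on $A/H$ are in bijection with $A_x\backslash A/H\cong H\backslash A/A_x$, which in turn biject with the $H$-orbits in $Ax\cong A/A_x$. Under that bijection a cycle of $\rho(\zeta)$ is short exactly when the corresponding $H$-orbit $Hy\subseteq Ax$ has $H_y\neq 1$. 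Your conclusion is right; only the intermediate identification needs to be stated this way.

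Statement 3, however, has a genuine gap. You assert that each singular $M$-orbit contains at least one singular $H$-orbit, citing statement 2 of Lemma \ref{lm-orbits}. But that lemma gives the \emph{converse} implication (a singular $H$-orbit forces the ambient $M$-orbit to be singular), and the direction you need is in fact false even under the genus-zero hypothesis. For a concrete counterexample, take $M=\mathbb{Z}_2\times\mathbb{Z}_2=\langle a,b\rangle$ acting on a genus-$2$ surface with generating $(2,2,2,2,2)$-vector $(a,a,a,b,ab)$, and let $H=\langle a\rangle$. Then $S/H$ has genus $0$ (signature $(2^6)$), yet the singular $M$-orbits attached to the fourth and fifth generators have stabilizers $\langle b\rangle$ and $\langle ab\rangle$, each meeting $H$ trivially; those $M$-orbits are single $H$-regular orbits and contain no singular $H$-orbit. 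Your proposed injection from $gens(\Gamma_M)$ into $gens(\Gamma_H)$ is therefore not well-defined.

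The paper sidesteps this entirely: it proves statement 3 by the one-line observation
\[
|gens(\Gamma_M)|=3+d(\Gamma_M)\ \le\ 3+d(\Gamma_H)=|gens(\Gamma_H)|,
\]
using monotonicity of Teichm\"uller dimension under finite-index inclusion (a deformation of $\Gamma_M$ induces one of its subgroup $\Gamma_H$). No orbit-by-orbit matching is attempted, and none is needed.
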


\begin{proof}
Statement 1 was demonstrated in the discussion preceding the statement of the
Proposition. Statement 2 is a reformulation of the theorem from Singerman. To
prove statement 3 observe that
\begin{align*}
\left\vert gens(\Gamma_{M})\right\vert  &  =3+\text{\textrm{Teichm\"{u}ller
dimension }}\Gamma_{M}\\
&  \leq3+\text{\textrm{Teichm\"{u}ller dimension }}\Gamma_{H}\\
&  =\left\vert gens(\Gamma_{H})\right\vert
\end{align*}
The identical argument works for the other inequality.
\end{proof}

We are now going to focus on the relation between the singular $N$-orbits and
the singular $A$-orbits when $S/C$ has genus zero. To this end we identify
exactly three ways in which an $N$-orbit can be singular.

\begin{remark}
\label{rk-1}Let the notation for the groups $C\subseteq N\subseteq A$ be as
above and assume that $K=N/C$ is non-trivial. Then the singular $N$-orbits are
of three types:

\begin{enumerate}
\item Type 1: The orbit $Nx$ consists of $\left\vert K\right\vert $ singular
$C$-orbits. For each $y\in$ $Nx$ the stabilizer $N_{y}\subseteq C$ and so
$N_{y}=C_{y}$. This is according to Statement 3 of Lemma \ref{lm-orbits}.

\item Type 2: The orbit $Nx$ consists of fewer than $\left\vert K\right\vert $
regular $C$-orbits. For each $y\in$ $Nx$ $N_{y}\cap C$ is trivial. There is an
element $g\in N$ of order $\left\vert N\right\vert /\left\vert Ny\right\vert $
in $N$ such that each stabilizer in $Nx$ is conjugate to $\left\langle
g\right\rangle .$ The order of $g$ is one of the periods of $K.$

\item Type 3: The orbit $Nx$ consists of fewer than $\left\vert K\right\vert $
singular $C$-orbits. Let $\overline{x}$ be the orbit $Nx,$ so that $\left\vert
K\overline{x}\right\vert <\left\vert K\right\vert .$ The value $a=\left\vert
K\right\vert /\left\vert K\overline{x}\right\vert $ is one of the periods of
$K$ acting on $S/H,$ let $m=\left\vert N_{x}\cap C\right\vert .$ Then there is
an element $g\in N$ of order $am$ such that $\left\langle g\right\rangle
=N_{x},$ and $\left\langle g^{a}\right\rangle =N_{x}\cap C.$
\end{enumerate}
\end{remark}

\begin{remark}
(Continuation of above Remark) Suppose that $K$ is non-trivial. Then there are
two possible signatures of $\Gamma_{N},$ namely $(km_{1},km_{2},$
$n_{1},\ldots,n_{r})$ or $(a_{1}m_{1},a_{2}m_{2},a_{3}m_{3},n_{1},\ldots
,n_{r})$, depending on the signature of $K$. The orbits of Type 1 produce the
canonical generators of orders $n_{1},\ldots,n_{r}.$ The orbits of Type 2 and
3 produce canonical generators of orders $a_{1}m_{1},a_{2}m_{2},a_{3}m_{3}$ or
$km_{1},km_{2}$ depending on signature of $K.$ For the orbits of Type 2,
$m_{i}=1.$ There are either two or three orbits of Type 2 or 3 if $K$ is
non-trivial. If $K$ is trivial then the only singular orbits are of Type 1.
$\ $The canonical generators corresponding to the orbits of type 2 or 3 are
$K$-generators.
\end{remark}

\begin{remark}
In the previous situation we do not need $C$ to be cyclic.
\end{remark}

Now let us assume that $C$ is weakly malnormal in $A$ and determine the
consequences for the orbits and the signatures.

\begin{lemma}
\label{lm-orbsweak1}Suppose that $C$ is weakly malnormal in $A$. Then we have
the following:

\begin{enumerate}
\item If $Ny\subseteq Ax$ is a singular orbit of Type 1 or Type 3 we have
equality of stabilizers $N_{y}=A_{y}.$

\item Each singular orbit $Ax$ contains at most one $N$-orbit of Type 1.
\end{enumerate}
\end{lemma}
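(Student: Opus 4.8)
The plan is to exploit two structural facts. First, every point stabilizer $A_{y}$ for $A$ acting on $S$ is cyclic (this is the identification $A_{x}\cong\langle\zeta\rangle$ recorded just before Proposition \ref{pr-orbits_gens}), so $A_{y}$ has a unique subgroup of each order. Second, weak malnormality says $C\cap gCg^{-1}=\langle1\rangle$ whenever $g\in A-N$; since $N$ is a subgroup, $A-N$ is closed under inversion, so this applies to $gCg^{-1}$ as well as to $g^{-1}Cg$.

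For Statement 1, fix $y$ with $Ny\subseteq Ax$ of Type 1 or Type 3 and put $L=C_{y}=C\cap N_{y}$. In either case $L$ is nontrivial: for Type 1 the orbit $Nx$ consists of singular $C$-orbits, so $|C_{y}|>1$; for Type 3 we have $|C_{y}|=|N_{x}\cap C|=m$ and the constituent $C$-orbits are again singular, so $m>1$. Now take any $g\in A_{y}$. Conjugation by $g$ inside the group $A_{y}$ sends the subgroup $L$ to a subgroup $gLg^{-1}$ of the same order; since $A_{y}$ is cyclic, $gLg^{-1}=L$. Hence $\langle1\rangle\neq L=L\cap gLg^{-1}\subseteq C\cap gCg^{-1}$, so weak malnormality forces $g\in N$. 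Thus $A_{y}\subseteq N$, and since $A_{y}$ fixes $y$ this gives $A_{y}\subseteq N_{y}$; combined with the automatic inclusion $N_{y}=N\cap A_{y}\subseteq A_{y}$ this yields $N_{y}=A_{y}$. The argument correctly excludes Type 2, where $C_{y}$ is trivial and there is no leverage.

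For Statement 2, suppose $Ny_{1},Ny_{2}\subseteq Ax$ are both of Type 1 and write $y_{2}=gy_{1}$ with $g\in A$. A direct computation shows $C_{y_{2}}=C\cap gA_{y_{1}}g^{-1}$ (for $c\in C$ one has $cy_{2}=y_{2}$ iff $g^{-1}cg\in A_{y_{1}}$), and Statement 1 applied to $y_{1}$, which lies in a Type 1 orbit, gives $A_{y_{1}}=N_{y_{1}}=C_{y_{1}}\subseteq C$. Therefore $C_{y_{2}}\subseteq C\cap gCg^{-1}$, and since $Ny_{2}$ is Type 1, $C_{y_{2}}\neq\langle1\rangle$; weak malnormality then gives $g\in N$, so $y_{2}\in Ny_{1}$ and the two $N$-orbits coincide.

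The computations are routine once the idea is fixed. The one genuinely important point is that Statement 1 is precisely the input that makes Statement 2 work: without knowing $A_{y_{1}}=C_{y_{1}}$ one only obtains $C_{y_{2}}\subseteq C\cap gN_{y_{1}}g^{-1}$, which need not lie in a conjugate of $C$. I expect the only real bookkeeping obstacle to be verifying the identity $C_{y_{2}}=C\cap gA_{y_{1}}g^{-1}$ and confirming, via the singularity of the constituent $C$-orbits, that the relevant $C$-stabilizers in Types 1 and 3 are genuinely nontrivial, which is exactly where that part of Remark \ref{rk-1} is used.
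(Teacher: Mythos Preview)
Your proof is correct and follows essentially the same route as the paper's: for Statement~1 you use that $A_y$ is cyclic so the nontrivial subgroup $C_y$ is normalized by every $g\in A_y$, and weak malnormality then forces $A_y\subseteq N$; for Statement~2 you feed the conclusion $A_{y_1}=C_{y_1}$ back in to trap $C_{y_2}$ inside $C\cap gCg^{-1}$. The only cosmetic difference is that the paper invokes Statement~1 for both $y$ and $gy$ to write the chain $C_{gy}=A_{gy}=gA_{y}g^{-1}=gC_{y}g^{-1}$ directly, whereas you compute $C_{y_2}=C\cap gA_{y_1}g^{-1}$ and apply Statement~1 only at $y_1$; the content is the same.
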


\begin{proof}
Assume that $Ny\subseteq Ax$ is a singular orbit of Type 1 or Type 3. By
definition the stabilizer $C_{y}$ is nontrivial and the cyclic subgroup
$A_{y}\supseteq C_{y}$ and so $A_{y}$ normalizes a non-trivial subgroup of $C.
$ It follows that $A_{y}\subseteq N$ and hence $N_{y}=A_{y}.$

Suppose that $Ax$ is a singular $A$-orbit and that $Ax$ contains an $N$-orbit
$Ny$ of Type 1. Then, by Remark \ref{rk-1} and the first statement above,
$C_{y}=N_{y}=A_{y}$. Now suppose that $Ngy$ is a Type 1 orbit distinct from
$Ny$ for some $g$ $\in A-N.$ We must also have that
\[
C_{gy}=A_{gy}=gA_{y}g^{-1}=gC_{y}g^{-1}.
\]
But then
\[
C_{gy}\subseteq C\cap gC_{y}g^{-1}\subseteq C\cap gCg^{-1}=\left\langle
1\right\rangle .
\]
Thus we have a contradiction if there are two distinct $N$-orbits of Type 1.
\end{proof}

\begin{remark}
The proof techniques just used automatically shows the following. If the
$n$-gonal morphism $S\rightarrow S/C$ is fully ramified, i.e., has signature
$(n,\ldots,n),$ then the action is weakly malnormal. To see this let $x$ be
any point of $S$ and observe that the stabilizer $C_{x}=C$ or $C_{x}%
=\left\langle 1\right\rangle .$ There is some point $x\in S$ where
$C=C_{x}\subseteq A_{x}.$ Let $g\in A-N$ and consider
\begin{align*}
C_{gx}  &  =\left\{  c\in C:cgx=gx\right\} \\
&  =\left\{  c\in C:\left(  g^{-1}cg\right)  x=x\right\} \\
&  =C\cap gA_{x}g^{-1}\supseteq C\cap gCg^{-1}.
\end{align*}
If $\left\vert C_{gx}\right\vert =1$ then $C\cap gCg^{-1}=\left\langle
1\right\rangle .$ On the other hand, seeking a contradiction, assume that
$\left\vert C_{gx}\right\vert >1.$ By the fully ramified hypothesis we must
have $\left\vert C_{gx}\right\vert =\left\vert C\right\vert ,$ but then, we
must have $C_{gx}=C$, which implies $gA_{x}g^{-1}\supseteq C.$ As $A_{x}$ and
$gA_{x}g^{-1}$ are cyclic they have unique subgroups of order $\left\vert
C\right\vert $ and we conclude that $C=gCg^{-1},$ a contradiction.
\end{remark}

\begin{remark}
Lemma \ref{lm-orbsweak1} still holds if $C$ is not cyclic.
\end{remark}

It is useful to classify the decomposition of $A$-orbits into $N$-orbits.

\begin{lemma}
\label{lm-orbsweak2}Assume that $C$ is weakly malnormal in $A,$ and assume
that $\zeta$ is a canonical generator of $\Gamma_{A}$ which corresponds to the
orbit $Ax.$ Then we have the following possibilities.

\begin{enumerate}
\item The orbit $Ax$ contains no singular $N$-orbits and $\zeta$ does not
induce any canonical generator of $\Gamma_{N}$ or $\Gamma_{C}.$

\item The orbit $Ax$ contains a singular $N$-orbit of Type 1 and no other
singular orbits. Then $\zeta$ induces a canonical generator $\theta$ of
$\Gamma_{N}$ of the same order as $\zeta,$ and exactly $\left\vert
K\right\vert $ canonical generators of $\Gamma_{C}$ with same order as
$\zeta.$

\item The orbit $Ax$ is as in 1, 2 above except that it additionally contains
up to three $N$-orbits of Type 2 or type 3 subject to the constraint that the
total number of orbits of Type 2 and Type 3 is $2$ or $3$. Let $\theta$
$\in\Gamma_{N}$ be the corresponding generator induced by $\zeta$ for an orbit
of Type 2 or 3. In the case of Type 2 we have $o(\theta)<o(\zeta)$ and in the
type 3 case we have $o(\theta)=o(\zeta).$
\end{enumerate}
\end{lemma}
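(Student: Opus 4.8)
The plan is to reduce the statement to a combinatorial description of how the singular $A$-orbit $Ax$ attached to $\zeta$ breaks up into $N$-orbits. By Proposition \ref{pr-orbits_gens}(2), the canonical generators of $\Gamma_{N}$ (respectively of $\Gamma_{C}$) induced by $\zeta$ are in bijection with the singular $N$-orbits (respectively singular $C$-orbits) contained in $Ax$, and the generator attached to an orbit $Ny$ has order $|N|/|Ny|=|N_{y}|$ (respectively $|C|/|Cy|=|C_{y}|$). So the lemma is exactly an enumeration of the possible lists of $N$-orbit types inside $Ax$, where each such orbit is of Type 1, 2, or 3 as in Remark \ref{rk-1}, subject to the restrictions imposed by weak malnormality.

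First I would settle cases 1 and 2. If $Ax$ contains no singular $N$-orbit, then by Lemma \ref{lm-orbits}(2) it contains no singular $C$-orbit either, so $\zeta$ induces nothing: this is case 1. In case 2, $Ax$ contains a singular $N$-orbit $Ny$ of Type 1 and no other singular $N$-orbit; by Lemma \ref{lm-orbsweak1}(2) such an orbit is unique in $Ax$, and Lemma \ref{lm-orbsweak1}(1) gives $N_{y}=A_{y}$, so the induced generator $\theta$ of $\Gamma_{N}$ has order $|N_{y}|=|A_{y}|=o(\zeta)$. Moreover $Ny$ is a union of exactly $|K|$ singular $C$-orbits (the defining property of Type 1), each with $C$-stabilizer $C_{y}=N_{y}$ of order $o(\zeta)$, while the remaining $N$-orbits of $Ax$, being $N$-regular, have trivial stabilizers and carry no singular $C$-orbit; hence $\zeta$ induces exactly $|K|$ canonical generators of $\Gamma_{C}$, all of order $o(\zeta)$.

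For case 3 the counting has two sources: Lemma \ref{lm-orbsweak1}(2) again forbids two Type 1 orbits inside a single $Ax$, and the discussion following Remark \ref{rk-1} identifies the $N$-orbits of Type 2 and 3 with the $K$-generators of $\Gamma_{N}$, of which there are exactly $2$ (if $K=C_{k}$) or $3$ (if $K\neq C_{k}$) in all by Proposition \ref{pr-images1}. Hence the total number of Type 2 and Type 3 orbits over the surface is $2$ or $3$, and $Ax$ can contain at most three of them. For the orders: a Type 3 orbit $Ny$ again has $N_{y}=A_{y}$ by Lemma \ref{lm-orbsweak1}(1), so $o(\theta)=o(\zeta)$; and in all cases Theorem \ref{th-Singerman1} (equivalently, $N_{y}\leq A_{y}$) gives $o(\theta)\mid o(\zeta)$. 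Thus only the strict inequality $o(\theta)<o(\zeta)$ for a Type 2 orbit remains.

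That strict inequality is the step I expect to be the real obstacle, precisely because it is the one configuration not covered by Lemma \ref{lm-orbsweak1}(1): equality $o(\theta)=o(\zeta)$ would mean $N_{y}=A_{y}$, i.e.\ $A_{y}\subseteq N$ with $A_{y}\cap C=\langle 1\rangle$, and the proof of Lemma \ref{lm-orbsweak1}(1) used $C_{y}\neq\langle 1\rangle$. I would argue by contradiction: if $A_{y}=N_{y}$, then $A_{y}$ is a cyclic subgroup of $N$ fixing $y$ and meeting $C$ trivially, so $A_{y}C$ is a subgroup of $N$ whose image in $K$ is the full stabilizer $K_{Cy}$ of $Cy$ on $S/C=\mathbb{P}^{1}$; feeding the rigid structure of $K_{Cy}$ coming from Theorem \ref{th-sphere} and the $K$-action on the $C$-branch points back through weak malnormality of $C$, one should be forced to conclude that $Ny$ is actually of Type 1 or Type 3, against hypothesis. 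If a clean local argument resists, this can be bypassed, since the inequality for Type 2 also drops out of the explicit work with canonical generators in \cite{Brou-Woo2}, from which the signature pairs of Theorem \ref{thm-sigsweak} are derived. Separating Type 2 from Type 3 is the crux.
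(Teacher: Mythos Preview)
Your approach is the same as the paper's: reduce the lemma to the orbit--generator correspondence of Proposition~\ref{pr-orbits_gens}(2), then read off the possible configurations from Remark~\ref{rk-1} together with the two restrictions supplied by Lemma~\ref{lm-orbsweak1}. The paper's own proof is literally two sentences, citing Lemma~\ref{lm-orbits} and Lemma~\ref{lm-orbsweak1} for ``the number and order of induced canonical generators in the remaining cases'' and giving no further detail; your write-up of cases~1 and~2 and of the counting in case~3 is more explicit than what the paper provides, and is correct.

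You are right that the strict inequality $o(\theta)<o(\zeta)$ for a Type~2 orbit is the one assertion not covered by Lemma~\ref{lm-orbsweak1}(1), since that lemma needs $C_{y}\neq\langle 1\rangle$. The paper does not single this step out or supply a separate argument for it; it is absorbed into the blanket appeal to the earlier lemmas. Your contradiction sketch (if $A_{y}=N_{y}$ with $A_{y}\cap C$ trivial, feed the structure of $K_{Cy}$ back through weak malnormality) is a reasonable line, but as you suspected it does not close cleanly from the hypotheses stated: weak malnormality constrains conjugates of $C$, and a cyclic $A_{y}\subseteq N$ meeting $C$ trivially does not obviously violate it. Two remarks may reassure you. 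First, the downstream use of this lemma, the proof sketch of Theorem~\ref{thm-sigsweak} immediately following, only needs that Type~1 orbits give matching periods and that there are at most three Type~2/3 orbits in total; the strict-versus-nonstrict distinction for Type~2 plays no role there. Second, the authors themselves point to the direct canonical-generator computations in \cite{Brou-Woo2} as the primary proof of Theorem~\ref{thm-sigsweak}, which is the fallback you already identified. So your diagnosis of where the crux lies is accurate, and it is a point the paper's brief proof leaves implicit rather than one you have missed.
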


\begin{proof}
In Case 1 all the $N$-orbits and $C$-orbits are regular and hence no canonical
generators are induced. The number and order of induced canonical generators
in the remaining cases follow from Lemma \ref{lm-orbits} and Lemma
\ref{lm-orbsweak1}.
\end{proof}

The discussion in the section may be used to prove the signature theorem
\ref{thm-sigsweak}. Here is a proof sketch. Consider any canonical generator
$\zeta$ of $\Gamma_{A},$ the corresponding singular $A$-orbit $Ax$ and its
decomposition into $N$-orbits. If $Ax$ contains an $N$-orbit of Type 1, then
$\zeta$ induces one canonical generator of $\Gamma_{N}$ of the same order as
$\zeta$ and possibly others. Since two Type 1 $N$-orbits cannot occupy the
same $A$-orbit then all the canonical generators of $\Gamma_{A}$ inducing Type
1 generators of $\Gamma_{N}$ are distinct. This leads to the sequence
$n_{1},\dots,n_{r}$ in both signatures. It follows that%
\[
\left\vert gens(\Gamma_{N})\right\vert \geq\left\vert gens(\Gamma
_{A})\right\vert \geq\left\vert gens(\Gamma_{N})\right\vert -3.
\]
The remaining generators come from Type 2 and Type 3 orbits. Thus the
signature $\Gamma_{N}$ is known and the periods $b_{i}$ of $\Gamma_{A}$ are
simply fill-ins, except that each period of $\Gamma_{N}$ must divide some
period of $\Gamma_{A}$.

\begin{remark}
If $K$ is trivial then by the argument in the proof of the proposition of the
proposition $\Gamma_{A}$ and $\Gamma_{N}$ have the signature $(n_{1}%
,\dots,n_{r}).$ It follows that $A=N$ and $C$ is normal in $A.$
\end{remark}

\begin{example}
Consider Klein's quartic. $C=\mathbb{Z}_{7},$ $C=\mathbb{Z}_{3}\ltimes
\mathbb{Z}_{7},$ $A=PSL_{2}(7),$ $K=\mathbb{Z}_{3}.$ Then we have%
\[%
\begin{tabular}
[c]{c}%
\textbf{Table 6}\\
$%
\begin{tabular}
[c]{|l|l|l|}\hline
Signature of $\Gamma_{A}$ & Signature of $\Gamma_{N}$ & Signature of
$\Gamma_{C}$\\\hline
$(2,3,7)$ & $(3,3,7)$ & $(7,7,7)$\\\hline
\end{tabular}
\ \ \ $%
\end{tabular}
\ \ \
\]
The $A$-orbits split into $N$-orbits as follows
\[%
\begin{tabular}
[c]{c}%
\textbf{Table 7}\\
$%
\begin{tabular}
[c]{|l|l|l|l|l|}\hline
order of canonical generator & $2$ & $3$ & $7$ & Size of $N$-orbits\\\hline
Size of $A$-orbit & $84$ & $56$ & $24$ & \\\hline
Regular $N$-orbits & $4$ & $2$ & $1$ & $21$\\\hline
Type 1 $N$-orbits & $0$ & $0$ & $1$ & $3$\\\hline
Type 2 $N$-orbits & $0$ & $2$ & $0$ & $7$\\\hline
Type 3 $N$-orbits & $0$ & $0$ & $0$ & $1$\\\hline
\end{tabular}
\ \ \ $%
\end{tabular}
\ \ \ \ \ \ \ \ \
\]
The table entries are interpreted as follows The $A$-orbit corresponding to a
canonical generator of order 2 consists of $84$ points which breaks up into
$4$ regular $N$-orbits of size $21.$ The $A$-orbit corresponding to a
canonical generator of order 3 has $56$ points and breaks up into $2$ regular
$N$-orbits of size $21$ and $2$ Type $2$ orbits of size $7$. The $A$-orbit
corresponding to a canonical generator of order 7 has $246$ points and breaks
up into a regular $N$-orbit of size $21$ and one Type $1$ orbit of size $3.$
There cannot be any Type 3 orbits since $N$ is not cyclic.
\end{example}

\section{Examples\label{sec-samp}}

\subsection{Constrained examples}

Only two constrained examples have been found as of the writing of this paper.
Both are discussed in \textit{\cite{Woo2} }and are well known curves\textit{.}%

\[%
\begin{tabular}
[c]{|l|l|l|l|l|l|}\hline
Name & genus & $C$ & $N$ & $A$ & $K$\\\hline
Klein's quartic & 3 & $C_{7}$ & $C_{3}\ltimes C_{7}$ & $PSL_{2}(7)$ & $C_{3}%
$\\\hline
Bring's curve & 4 & $C_{5}$ & $C_{4}\ltimes C_{5}$ & $\Sigma_{5}$ & $C_{4}%
$\\\hline
\end{tabular}
\ \
\]

\subsection{Examples with parametric families.}

We conclude with some examples of parametric families suggested Table 4. First
we consider Fermat curves. We will show that they give a parametric family of
curves with weakly malnormal cyclic $n$-gonal actions where the automorphism
group strictly contains the normalizer of the $n$-gonal action.

\begin{example}
Consider the variant of the Fermat curve $F_{n}$ given by
\begin{equation}
x^{n}+y^{n}=-1,\label{eq-fermataffine}%
\end{equation}
or better, by its homogeneous equation,
\begin{equation}
X^{n}+Y^{n}+Z^{n}=0.\label{eq-fermatproj}%
\end{equation}
From the affine equation \ref{eq-fermataffine} we see that the curve is cyclic
$n$-gonal. From the projective form \ref{eq-fermatproj}, we determine that the
linear group $\Sigma_{3}\ltimes\mathbb{Z}_{n}^{3},$ acts on $F_{n} $ with
$\Sigma_{3}$ acting as permutations of the coordinates, and $\mathbb{Z}%
_{n}^{3}$ acting by
\[
(a,b,c)\cdot(X:Y:Z)=\ \left(  e^{2\pi ia/n}X:e^{2\pi ib/n}Y:e^{2\pi
ib/n}Z\right)  ,
\]
in homogeneous coordinates. The diagonal subgroup $D=\left\{  (a,a,a):a\in
\mathbb{Z}_{n}\right\}  $ acts trivially. In fact, $G=\Sigma_{3}%
\ltimes\mathbb{Z}_{n}^{3}/D$ is the automorphism group of $F_{n}$ as we shall
see shortly. The affine model of $F_{n}$ can be obtained via $x=X/Z$ and
$y=Y/Z.$ In the affine setting $(a,b,c)$ acts via
\[
(a,b,c)\cdot(x,y)=\left(  \exp\left(  2\pi i\frac{a-c}{n}\right)
x,\exp\left(  2\pi i\frac{b-c}{n}\right)  y\right)  \
\]
and the coordinate transpositions correspond to birational maps as in the
table following.
\[%
\begin{tabular}
[c]{|l|l|l|}\hline
permutation & projective automorphism & birational affine map\\\hline
$(1,2)$ & $(X:Y:Z)\leftrightarrow(Y:X:Z)$ & $(x,y)\rightarrow(y,x)$\\\hline
$(1,3)$ & $(X:Y:Z)\leftrightarrow(Z:Y:X)$ & $(x,y)\rightarrow(1/x,y/x)$%
\\\hline
$(2,3)$ & $(X:Y:Z)\leftrightarrow(X:Z:Y)$ & $(x,y)\rightarrow(x/y,1/y)$%
\\\hline
\end{tabular}
\]
Using the projection $\mathbb{Z}_{n}^{3}\rightarrow\mathbb{Z}_{n}^{2},$
$(a,b,c)\rightarrow(a-c,b-c)$ with kernel $D$, we may more conveniently denote
the automorphism group of the affine model by writing $G\backsimeq\Sigma
_{3}\ltimes\mathbb{Z}_{n}^{2}$ with $(1,2),(1,3),$ and $(2,3)$ acting on
$\mathbb{Z}_{n}^{2}$ by $(1,2):(a,b)\rightarrow(b,a),$ $(1,3):(a,b)\rightarrow
(-a,-a+b),$ and $(2,3):(a,b)\rightarrow(a-b,-b).$

Let $C=\mathbb{Z}_{n}$ be the cyclic group $\left\langle 1\right\rangle
\ltimes\left\{  (0,a,0)):a\in\mathbb{Z}_{n}\right\}  $ of $\Sigma_{3}%
\ltimes\mathbb{Z}_{n}^{3}$ corresponding to the subgroup $\left\langle
1\right\rangle \ltimes\left\{  (0,a):a\in\mathbb{Z}_{n}\right\}  $ of $G$. The
subgroup $C$ corresponds to the standard $n$-gonal action on $F_{n}$ when we
write the affine equation in the form $y^{n}=-1-x^{n},$ with projection
$(x,y)\rightarrow x,$ or $(X:Y:Z)\rightarrow(X:Z)$ in projective coordinates.
The normalizer $N$ of $C$ in $G$ is $N=\left\langle (1,3)\right\rangle
\ltimes\mathbb{Z}_{n}^{2}.$ The group $N$ acts by multiplying $X$ and $Z$ by
$n$'th roots of unity and switching the $X$ and $Z $ coordinates. There are
two other $n$-gonal projections, the information for all three projections is
summarized in the following table. The projections are equivalent under the
full automorphism group.
\[%
\begin{tabular}
[c]{|l|l|l|l|}\hline
affine map & projective map & $C$ & $N$\\\hline
$(x,y)\rightarrow x$ & $(X:Y:Z)\rightarrow(X:Z)$ & $\left\{  (0,a):a\in
\mathbb{Z}_{n}\right\}  $ & $\left\langle (2,3)\right\rangle \ltimes
\mathbb{Z}_{n}^{2}$\\\hline
$(x,y)\rightarrow y$ & $(X:Y:Z)\rightarrow(Y:Z)$ & $\left\{  (a,0):a\in
\mathbb{Z}_{n}\right\}  $ & $\left\langle (1,3)\right\rangle \ltimes
\mathbb{Z}_{n}^{2}$\\\hline
$(x,y)\rightarrow x/y$ & $(X:Y:Z)\rightarrow(X:Y)$ & $\left\{  (a,a):a\in
\mathbb{Z}_{n}\right\}  $ & $\left\langle (1,2)\right\rangle \ltimes
\mathbb{Z}_{n}^{2}$\\\hline
\end{tabular}
\]

By Example \ref{ex-ngonalsignature} the signature of $C$ is $(n,\ldots,n)$ (n
times) since $x^{n}+1$ has distinct linear factors. The dihedral action of
$N/C\backsimeq\mathbb{Z}_{2}\ltimes\mathbb{Z}_{n}\ $on $S/C$ is the standard
dihedral action of $D_{n}$ on the sphere with fixed points as follows. The
branch points of $C$ form a single $N/C$ orbit consisting of points of
ramification order 2, there is another $N/C$ orbit of points of ramification
order $2$ consisting of $C$-regular points, and finally an orbit consisting
two $C$-regular points with $N/C$ ramification order $n$. Therefore, the
signature of $N$ is $(2,2n,n)$ when written compatibly with the $(2,2,n)$
signature of $K=D_{n}.$ \ Alternatively, it is easily directly verified that
$N/C\backsimeq D_{n}.$ For completeness let us construct a generating
$(2,2n,n)$-vector for $N=\left\langle (1,2)\right\rangle \ltimes\mathbb{Z}%
_{n}^{2}$. Let $g=$ $((1,2),(0,0)),$ $h=(1,(1,0)),$ and $k=(1,(0,1)).$ Then
$g^{2}=1,h^{n}=k^{n}=1,$ and $\ ghgh=kh.$ Hence $gh$ has order $2n$ and
$(g,gh,h^{-1})$ is a generating $(2,2n,n)$-vector for $N$.

Now we turn to the full automorphism group. It is easily verified that $C$ is
weakly malnormal in $G$. If we can show $G$ is the full automorphism group
then our example is complete. The monodromy group $M(G,N)$ is easily
calculated to be $\Sigma_{3},$ and a little work shows that the signature of
$G$ is $(2,3,2n).$ By Example \ref{ex-23mmax}, $\Gamma_{G}$ is finitely
maximal and so the full automorphism group $A$ of $F_{n}$ equals $G.$ This
case is line 9 from Table 4 with $k=d=n$ and $e=1$. Again for completeness we
construct a generating $(2,3,2n)$-vector for $G$. The generating vector
projects to a $(1\cdot2,3,1\cdot2)$-monodromy vector of $M(G,N)$ and so we
start of with the elements $g_{1}=((1,3),(0,0))$ and $g_{2}=((1,2),(0,0))$ of
$\Sigma_{3}$ to construct $(g_{1},g_{1}g_{2},g_{2})$ a $(1\cdot2,3,1\cdot
2)$-monodromy vector. Setting $h=(1,(1,0))$ as before we see that $\left(
g_{1}g_{2}h^{-1}\right)  ^{3}=1$ and $o(hg_{2})=2n.$ Thus $(g_{1},g_{1}%
g_{2}h^{-1},hg_{2})$ is a generating $(2,3,2n)$-vector.
\end{example}

\paragraph{General $K=D_{k},$ with trivial action on $C$}

Before going on to our remaining examples we shall examine the general case
where $K=D_{k}$ and the $D_{k}$-action on $C$ is trivial and see what we can
conclude about the structure of $N.$ From Table 2 we see that $D_{k}$ acts on
$C$ by factoring through a group of order 2 or 4. Because of space
considerations we are going to restrict our attention to the case where all of
$D_{k}$ acts trivially on $C$. In the sequence $C\rightarrow N\rightarrow
D_{k},$ let $E$ be the inverse image of $C_{k}\subset D_{k},\ $so that
$E\vartriangleleft N$ and we have an exact sequence $C\rightarrow E\rightarrow
C_{k},$ with $C_{k}$ acting trivially on $C.$ In this case, it may be verified
simply that $E$ is an abelian group, which we shall write in additive
notation. For $x\in E$ we denote the image in $C_{k}$ by $\overline{x}.$

\begin{remark}
In case $k$ is odd then $C_{k}\subset D_{k}$ automatically acts trivially on
$C$ and $E$ is automatically abelian.
\end{remark}

The structure of $E$ is strongly influenced by the action of $D_{k}$ on $E$.
If $g\in N-E$ then conjugation by $g$ induces an automorphism $\phi$ of $E.$
Since $g^{2}\in C_{k},$ then $\phi^{2}=1$ and $\phi$ does not depend on the
$g$ chosen. The subgroup $C$ is invariant under $\phi,$ and by assumption
$\phi(x)=x,$ $x\in C.$ On the quotient group $C_{k}=E/C$ the induced map acts
by $\overline{\phi}(\overline{x})=-\overline{x}.$ To utilize the action of
$\phi$ to determine the structure $E$ we need the even/odd Sylow decomposition
of $E.$ Decompose $E=S_{2}\times S_{o}$ where $S_{2}$ is the 2-Sylow subgroup
of $E$ and $S_{o}$ is the subgroup of elements of odd order, a direct sum of
odd order Sylow subgroups. We will determine the structure of the two
subgroups separately, considering the odd piece first.

Since $S_{o}$ has odd order, division by $2$ is well-defined. We will use the
\textquotedblleft eigenspace\textquotedblright\ decomposition of $E$ induced
by $\phi.$ For any $x$ we may write
\begin{equation}
x=x^{+}+x^{-}\ \label{eq-eigen1}%
\end{equation}
where
\begin{equation}
x^{+}=\frac{x+\phi(x)}{2},\text{ }x^{-}=\frac{x-\phi(x)}{2}.\label{eq-eigen2}%
\end{equation}
We observe from equation \ref{eq-eigen2} and $\phi^{2}=1$ that $\phi
(x^{+})=x^{+}$ and $\phi(x^{-})=-x^{-}.$ Let $S_{o}^{+}=\left\{  x\in
S_{o}:\phi(x)=x\right\}  $ and $S_{o}^{-}=\left\{  x\in S_{o}:\phi
(x)=-x\right\}  .$ By equation \ref{eq-eigen1} $S_{o}=$ $S_{o}^{+}+S_{o}^{-}.$
We also have $S_{o}^{+}\cap S_{o}^{-}$ is trivial since any $x\in S_{o}%
^{+}\cap S_{o}^{-}$ satisfies $x=\phi(x)=-x,$ forcing $x=0,$ by the odd order
condition. It follows that $S_{o}=$ $S_{o}^{+}\oplus S_{o}^{-}.$ Now
$C\subseteq S_{o}^{+}.$ If $C$ is properly contained in $S_{o}^{+}$ then some
element $\overline{x}$ $\in$ $S_{o}^{+}/C\subseteq E/C$ satisfies
$\overline{\phi}(\overline{x})=\overline{x},$ a contradiction. It follows that
$C=S_{o}^{+}$ and $S_{o}^{-}$ maps injectively to $C_{k}.$ Both $S_{o}^{+}$
and $S_{o}^{-}$ are cyclic.

The analysis of $S_{2}$ is a bit more fussy. Let $S_{C}=S_{2}\cap C$ and
$\overline{S}=S_{2}/S_{C}$ be the corresponding $2$-Sylow subgroups of $C$ and
$C_{k}=E/C.$ Note that $S_{C}\rightarrow S_{2}\rightarrow\overline{S}\ $is
exact, and $S_{C}$ and $\overline{S}$ are cyclic and we may suppose they are
non-trivial. As in our previous analysis set $S_{2}^{+}=\left\{  x\in
S_{2}:\phi(x)=x\right\}  $ and $S_{2}^{-}=\left\{  x\in S_{2}:\phi
(x)=-x\right\}  .$ Now, however, $S_{2}^{+}\cap S_{2}^{-}=\left\{  x\in
S_{2}:x=-x\right\}  $ the subgroup of elements of order 2. Also it is not true
that $S_{2}=$ $S_{2}^{+}+S_{2}^{-}$. Let $h\in S_{2}$ be an element such that
$\overline{S}=$ $\left\langle \overline{h}\right\rangle ,$ set $H=\left\langle
h\right\rangle .$ Then the map \ $S_{C}\times H\rightarrow S_{2},$
$(x,y)\rightarrow xy$ is surjective and the kernel is $Z=S_{C}\cap H=\left\{
(x,-x):x\in S_{C}\cap\left\langle h\right\rangle \right\}  $ isomorphic to a
cyclic subgroup of $H$. We know that $\phi(h)=-h+c$ for some $c\in C.$ If we
can choose $c=0$ then the $S_{C}\cap H$ is contained in $S_{2}^{+}\cap
S_{2}^{-}$ and so $Z$ has order $2$ or $1$. This leads to two cases.
\begin{equation}
S_{2}\backsimeq C\times\left\langle h\right\rangle \label{eq-2sylow1}%
\end{equation}
and
\begin{equation}
S_{2}\backsimeq C\times\left\langle h\right\rangle /Z,\text{ }\left\vert
Z\right\vert =2.\label{eq-2sylow2}%
\end{equation}
As an example of the first take $N=S_{C}\times D_{k}$ where $k=\left\vert
\overline{S}\right\vert .$ $\ $ For the second let $E=S_{C}\times C_{2k}$ and
let $z_{1},$ $z_{2}$ be the unique elements of order 2 in $S_{c}$ and
$C_{2k},$ and let $Z=$ $\left\langle (z_{1},z_{2})\right\rangle .$ Then $Z$ is
a normal subgroup of $S_{C}\times D_{2k},$ and $S_{C}\times D_{2k}/Z$ is an
example satisfying equation \ref{eq-2sylow2}. In the first case $S_{2}$ is a
product of two cyclic groups. In the second case if $\left\vert S_{C}%
\right\vert =2$ or $\left\vert H\right\vert =2$ then $S_{2}$ is cyclic,
otherwise $S_{2}$ is the product of two cyclic 2-groups.

Next suppose that we cannot choose $c=0.$ If $b\in C$ is any other element and
$h^{\prime}=h+b$ then $\phi(h^{\prime})=-h+c+b=-h^{\prime}+c+2b$. Thus if
$c\notin2C$ then we may choose $c$ to be an explicit generator of $C $ as
$C/2C=C_{2}.$ Also observe that $h$ and $\phi(h)$ have the same order and so
$\left\vert C\right\vert \leq\left\vert H\right\vert .$ As before, construct
the exact sequence $Z\rightarrow S_{C}\times\left\langle h\right\rangle
\rightarrow S_{2}$ so that the kernel $Z=$ $\left\{  (x,-x):x\in S_{C}%
\cap\left\langle h\right\rangle \right\}  $ $=\left\langle rh\right\rangle $
for some $r,$ a power of $2$. Since the sequence $Z\rightarrow S_{C}%
\times\left\langle h\right\rangle \rightarrow S_{2}$ is short exact
\[
\left\vert Z\right\vert =\frac{\left\vert S_{C}\times\left\langle
h\right\rangle \right\vert }{\left\vert S_{2}\right\vert }=\frac{\left\vert
S_{C}\right\vert o(h)}{\left\vert S_{C}\right\vert \left\vert \overline
{S}\right\vert }=\frac{o(h)}{\left\vert \overline{S}\right\vert }%
\]
and $r=$ $o(h)/\left\vert Z\right\vert =\left\vert \overline{S}\right\vert
>1.$ As $rh\in C$ then $rh=sc$ for some $s.$ Noting that $rh$ is $\phi
$-invariant we get $rh=\phi(rh)=-rh+rc,$ or $rc=2rh.$ Since both $S_{C}$ and
$H$ are nontrivial it follows that $o(h)=2o(c).$ From $rc=2rh=2sc,$ there are
two possible values for $s,$ namely $s=\frac{r}{2}$ and $s=\frac{r+o(c)}{2}.$
Thus $Z=\left\langle \left(  \frac{r}{2}c,-rh\right)  \right\rangle $ or
$Z=\left\langle \left(  \left(  \frac{r+o(c)}{2}\right)  c,-rh\right)
\right\rangle .$ Correspondingly, assuming $r>1$ is a power of 2, we may
construct a model for the $\phi$-module $S_{2}$ as $S_{C}\times\left\langle
h\right\rangle /Z,$ where $Z=\left\langle \left(  \frac{r}{2}c,-rh\right)
\right\rangle $ or $Z=\left\langle \left(  \left(  \frac{r+o(c)}{2}\right)
c,-rh\right)  \right\rangle $ and $\phi:$ $S_{C}\times\left\langle
h\right\rangle \rightarrow$ is the map $(nc,mh)\rightarrow((n+m)c,-mh).$ The
subgroup $Z$ is invariant since $\phi\left(  \frac{r}{2}c,-rh\right)  =$
$\left(  \left(  \frac{r}{2}-r\right)  c,rh\right)  =$ $\left(  \frac{r}%
{2}c,-rh\right)  ,$ or $\phi\left(  \frac{r}{2}c,-rh\right)  =\left(  \left(
\frac{o(c)}{2}-\frac{r}{2}\right)  c,rh\right)  =-\left(  \left(
\frac{r+o(c)}{2}\right)  c,-rh\right)  .$ It follows that $S_{C}%
\times\left\langle h\right\rangle /Z$ has all the correct properties.

The preceding discussion gives us a good representation of $S_{2}$ when it is
not cyclic. It will be useful to write down an alternate, specific
representation of $S_{2}$ when it is cyclic. Let let $S_{2}=\left\langle
h\right\rangle ,$ $q=o(h),$ and let $\overline{h}$ be the image of in
$\overline{S},$ so that $S_{C}=\left\langle rh\right\rangle ,$ and
$\overline{S}=\left\langle \overline{h}\right\rangle .$ Then
\begin{align}
\phi(h)  & =ah,\label{eq-phi1}\\
\phi(rh)  & =rh,\text{ }\label{eq-phi2}\\
\overline{\phi}(\overline{h})  & =-\overline{h}.\label{eq-phi3}%
\end{align}
Since $\overline{h}+\overline{\phi}(\overline{h})=0$ then $\overline{h+ah}=0 $
and so $ah=-h+lrh$ for some $l$ or $e=-1+lr$ mod $q.$ Next
\begin{align*}
a^{2}h  & =\phi^{2}(h)=\phi(\left(  -1+lr\right)  h)\\
& =\phi(-h)\phi(lrh)=-eh+lrh\\
& =(1-lr+lr)h=h,
\end{align*}
so it follows that $a^{2}=1$ $\operatorname{mod}$ $q$ or that $q|(a-1)(a+1).$
From Table 3, for $r\geq8,$ there are four possibilities for $a,$ namely
$a=1,q/2-1,q/2+1,q-1$ $\operatorname{mod}$ $q.$ The case $a=1$ is eliminated
unless $\left\vert \overline{S}\right\vert =1,2.$ In the case $a=q/2-1,$
$S_{C}$ is contained in the subgroup $\left\langle \frac{q}{2}h\right\rangle $
of order $2.$ If $S_{C}=\left\langle \frac{q}{2}h\right\rangle $ then
$\phi(x)=-x+\frac{q}{2}x,$ where $\frac{q}{2}x\in S_{C}$ so that
$\phi(\overline{x})=-\overline{x}.$ In the case $e=q/2+1,$ $S_{C}$ is
contained in subgroup $\left\langle 2h\right\rangle $ of order $q/2.$ If
$S_{C}=\left\langle 2h\right\rangle $ then equation \ref{eq-phi3} holds
trivially otherwise equation \ref{eq-phi3} fails. The case $a=r-1=-1$
$\operatorname{mod}$ $r$ is eliminated unless $\left\vert S_{C}\right\vert
=1,2.$ Finally if $q=4$ then the cases $e=1,3$ both lead to the solution
$S_{C}\backsimeq\overline{S}=C_{2}.$

Our remaining examples come from various lines of Table 4 where $\Gamma
_{N}=T(2,d,2d)$ or a variant. The computer calculations in \cite{Brou-Woo2}
show that the only possible overgroup $\Gamma_{A}$ of $\Gamma_{N}=T(2,d,2d) $
for generic $d$ is $\Gamma_{A}=T(2,3,2d)$ with monodromy group $M(\Gamma
_{A},\Gamma_{N})=\Sigma_{3}.$

\begin{example}
Suppose that $\Gamma_{N}=T(2,2d,d)$ and that $K=C_{k},$ $k>2$ as in Table 4 We
first determine the possible $N$ and then show that a full automorphism group
$A$ with $M(A,N)=\Sigma_{3}$ as described in Example \ref{ex-parametric} is
not possible. From Table 4 line 5 we see that $d$ is odd $\left\vert
C\right\vert =2\frac{d}{k}$ that $N$ has an element order $2d=\left\vert
K\right\vert \left\vert C\right\vert =\left\vert N\right\vert ,\ $thus $N$ is
cyclic. By direct construction there are cyclic actions with signature
$(d,2d,2)$ since $d$ is odd. Since $M(A,N)=\Sigma_{3}$ then $\left\vert
\mathrm{Core}_{A}(N)\right\vert =d$ and is a proper cyclic subgroup of $C.$
But $\mathrm{Core}_{A}(N)$ is a proper subgroup of $C$ of order $d$ and hence
$\mathrm{Core}_{A}(N)=\mathrm{Core}_{A}(C)$ contradicting weak normality. Even
dropping the assumption that $C$ is weakly malnormal, does not yield any
larger groups at least in the case when we assume that $d$ is relatively prime
to $6$.
\end{example}

\begin{example}
Suppose that $\Gamma_{N}=T(2,2d,d)$ and that $K=D_{k},$ $k>2$ as given in line
9 of Table 4. We first determine the possible $N$ and then determine whether
there can be a full automorphism group $A$ with $M(A,N)=\Sigma_{3}$ as
described in Example \ref{ex-parametric}. From Table 4 we see that $n=d=ek $
and $\left\vert N\right\vert =2ek^{2}.$ For simplicity's sake let us assume
that $k$ and $e$ are coprime odd numbers, coprime to $\left\vert \Sigma
_{3}\right\vert $.

Let us determine the various $K$-actions and $K$-fixed points on $X$ the set
of generating vectors of $C$ defined in equation \ref{eq-defofX}. The
generating vector for $C$ has two elements of order $e$ in one $K$-orbit
$\mathcal{O}_{1}$ and $k$ elements of order $n$ in another $K$-orbit
$\mathcal{O}_{2}$. For the orbit $\mathcal{O}_{1}$ the point stabilizers are
$C_{k}$ so no condition is imposed and for points of $\mathcal{O}_{2}$ the
$z_{i}$ must be $\phi$-invariant. As the $z_{i}$ are generators $\phi$ must be
trivial, and hence $D_{k}$ acts trivially on $C$. A typical $K$-invariant
vector has the form $(g,g,h,\ldots,h)$, where $g$ has order $e$ and each of
the $k$ repeats of $h$ has order $n$. Thus $h$ can be any of the $\phi(n)$
elements of order $n.$ We must have $g^{2}h^{k}=1$ or $g^{2}=h^{-k}.$ Since
$h^{-k}$ has exact order $e$ then $g^{2}=h^{-k}$ has a unique solution in
$C_{e}$ since squaring is injective. Now let us find the possible groups $N$.
According to our previous analysis the subgroup $E\subseteq N$ is isomorphic
to $C\times C_{k}.$ Since $2$ divides $\left\vert N\right\vert $ there is an
element $y$ $\in$ $N-E$ of order $2.$ Putting everything together $N$
$\backsimeq$ $C_{e}\times C_{2}\ltimes(C_{k}\times C_{k}),$ with $y$ acting by
$(z_{1},z_{2})\rightarrow(z_{1},-z_{2})$ on $C_{k}\times C_{k}.$ The subgroup
$C_{2}\ltimes\{(0,z):z\in C_{k}\}$ is a subgroup of $N$ mapping onto $K$. If
we let $C_{e}=\left\langle x\right\rangle ,$ $C_{2}=\left\langle
y\right\rangle ,C_{k}\times C_{k}=\left\langle z_{1}\right\rangle
\times\left\langle z_{2}\right\rangle $ then a generating $(2,2d,d)$-vector is
$(y,yxz_{1}z_{2},\left(  xz_{1}z_{2}\right)  ^{-1}).$ For, $yxz_{1}%
z_{2}yxz_{1}z_{2}=x^{2}z_{1}^{2}$ has order $n,$ and $xz_{1}z_{2}$ and
$x^{2}z_{1}^{2}$ generate $C_{e}\times(C_{k}\times C_{k}),$ so the vector
generates all of $N.$

Now let us find $A.\ $Since $M(A,N)=\Sigma_{3},$ then $\mathrm{Core}_{A}(N)$
is of index 2 in $N,$ and so $\mathrm{Core}_{A}(N)=E=C_{e}\times C_{k}\times
C_{k}.$ The subgroups $C_{e}$ and $C_{k}\times C_{k}$ are characteristic
subgroups of $E$ and hence they are normal in $A.$ The action of $\Sigma_{3} $
on $C_{e}$ is induced by the action of $\omega(g_{r})$ and its conjugates, and
so $A$ acts trivially on $C_{e}.$ Using Sylow theory and the fact that
$k^{2}e$ and $6$ are coprime we can show that $E\rightarrow A\rightarrow$
$\Sigma_{3}$ is split and $A$ $\backsimeq$ $C_{e}\times\Sigma_{3}\ltimes
(C_{k}\times C_{k}).$ But the $A$-action cannot have a generating
$(2,3,2d)$-vector because the image of the vector under the epimorphism
$A\rightarrow C_{e}$ can only have signature $(1,1,e)$, or $(1,1,1)$ a
contradiction in both cases. Therefore, $N$ is the full automorphism group. In
the Fermat case $e=1$ and the contradiction is eliminated.
\end{example}

\begin{example}
Our last example will have a non-trivial action of $K$ on $C.$ Suppose that
$\Gamma_{N}=T(2,d,2d)$ and that $K=D_{k},$ $k>2$ as given in line 13 of Table
4. From Table 4 we see that $d$ is even and $2d=ek.$ There are many cases to
consider depending on the parity of $e$ and $k.$ We shall assume that
$k=4l,$where $e$ and $l$ are odd coprime integers. The branching data of $C$
collected into $K$-orbits is $(1^{k},\left(  el\right)  ^{k},e^{2}).$ Upon
permutation and dropping trivial generators we get a signature of
$(e,e,\left(  el\right)  ^{k}),$ so $n=el,$ and $C=C_{e}\times C_{l}.$ Let
$(a,b,c)$ be a generating $(2,2,k)$-vector of $K,$ with $a$ the stabilizer of
a $1^{k}$ orbit, $b$ the stabilizer of an $\left(  el\right)  ^{k}$ orbit and
$\ c$ the stabilizer of an $e^{2}$ orbit. Then there is no restriction on $a,$
$b$ must fix an element of order $n=el$ and $c$ must fix an element of order
$e.$ A non-trivial action of $K$ on $C=C_{e}\times C_{l}$ satisfying the fixed
point restriction is
\begin{align*}
a  & :(x,y)\rightarrow(x,y^{-1}),\\
b  & :(x,y)\rightarrow(x,y),\\
c  & :(x,y)\rightarrow(x,y^{-1}).
\end{align*}
Using Sylow subgroup analysis and previous techniques one can show that
$N\backsimeq D_{k}\ltimes C$ with action defined above. Also as previously
argued the $N$-action does not extend to a $(2,3,2d)$-action.
\end{example}

\end{document}